\numberwithin{equation}{section}
\newtheorem{Theorem}{Theorem}
\newtheorem{Remark}[Theorem]{Remark}
\newtheorem{Assumption}[Theorem]{Assumption}
\newtheorem{Claim}[Theorem]{Claim}
\newtheorem{Lemma}[Theorem]{Lemma}
\newtheorem{Definition}[Theorem]{Definition}
\newtheorem{Proposition}[Theorem]{Proposition}
\numberwithin{Theorem}{section}
\title[Hydrodynamic limit for tiling dynamics]{Lozenge tiling dynamics and convergence to the hydrodynamic equation}
\author{Beno\^it  Laslier}
\address{LPMA - Univ. Paris Diderot,
B\^atiment Sophie Germain,
avenue de France,
75013 Paris, France
\\
E-mail: laslier@math.univ-paris-diderot.fr}
\author{Fabio Lucio Toninelli}
\address{Univ Lyon, CNRS, Universit\'e Claude Bernard Lyon 1,  UMR 5208, Institut Camille Jordan, F-69622 Villeurbanne cedex, France
 \\
E-mail: toninelli@math.univ-lyon1.fr}
\begin{document}

\maketitle

\begin{abstract}
  We study a reversible continuous-time Markov dynamics of a discrete
  $(2+1)$-dimensional interface. This can be alternatively viewed as a
  dynamics of lozenge tilings of the $L\times L$ torus, or as a conservative dynamics for a 
  two-dimensional system of interlaced particles. The particle
  interlacement constraints imply that the equilibrium measures are
  far from being product Bernoulli: particle correlations decay like the
  inverse distance squared and interface height fluctuations behave on large scales like a massless
  Gaussian field.  We consider a particular choice of the transition
  rates, originally proposed in \cite{LRS}: in terms of interlaced
  particles, a particle jump of length $n$ that preserves the
  interlacement constraints has rate $1/(2n)$.  This dynamics presents
  special features: the average mutual volume between two interface
  configurations decreases with time \cite{LRS} and a certain one-dimensional
  projection of the dynamics is described by the heat equation
  \cite{Wilson}.

  In this work we prove a hydrodynamic limit: after a diffusive
  rescaling of time and space, the height function evolution tends as
  $L\to\infty$ to the solution of a non-linear parabolic PDE. The initial profile is assumed to be $C^2$ differentiable and to contain no ``frozen region''. The
  explicit form of the PDE was recently conjectured \cite{LThydro} on
  the basis of local equilibrium considerations.  In contrast with the hydrodynamic equation for the
  Langevin dynamics of the Ginzburg-Landau model \cite{FunakiSpohn,Nishikawa},
  here the mobility coefficient turns out to be a non-trivial function
  of the interface slope.
\\
\\
2010 \textit{Mathematics Subject Classification: 60K35, 82C20,  
52C20}
  \\
  \textit{Keywords: Lozenge tilings, Glauber dynamics, Hydrodynamic limit}
\end{abstract}


\section{Introduction}

This work is motivated by the problem of understanding the large-scale
limit of stochastic interface evolution \cite{SpohnJSP,Funaki}. More
precisely we are interested in the motion of the interface between
two stable thermodynamic phases in a $(d+1)$-dimensional
medium. Needless to say, the physically most interesting case is that
of $d=2$.  The motion results from thermal fluctuations; overall
the interface tends to  flatten, thereby minimizing its free energy.

Mathematically we consider an \emph{effective interface approximation}
where the internal structure of the two phases above and below the
interface is disregarded and the $d$-dimensional interface is modeled
as the graph of a function from $\mathbb R^d$ to $\mathbb R$ (or some discretized
version of this).  The physical rationale behind this approximation is a time-scale separation:
 the internal degrees of freedom of the two phases relax much faster than those of the
interface. The effect of thermal fluctuations is modeled as a
Markov chain and the fact that phases are in coexistence translates to
reversibility of that chain.


On phenomenological grounds \cite{SpohnJSP} one expects that, on sufficiently coarse scales, the interface dynamics is
deterministic and described by a hydrodynamic equation of the type
\begin{eqnarray}
  \label{eq:eqgenerale}
  \partial_t \psi(x,t)=-\mu(\nabla \psi(x,t))\frac{\delta F[\psi]}{\delta \psi(x,t)},
\end{eqnarray}
where $F[\psi]$ is the equilibrium surface free energy
 and $\mu(\nabla\psi)$ is a
mobility coefficient, that depends on the details of the dynamics. Note that, without the prefactor $\mu$, \eqref{eq:eqgenerale} 
would be simply the gradient flow associated
with the surface energy functional. We can therefore interpret $\mu$ as describing how effective  the relaxation produced by the dynamics is, hence the name ``mobility''.
The relevant space-time rescaling where the  behavior described by \eqref{eq:eqgenerale} should emerge is the diffusive one.

 Writing $F$ as the integral of the
slope-dependent surface tension,
\begin{eqnarray}
  F[\psi]=\int \mathrm{d}x_1 \mathrm{d}x_2\, \sigma(\nabla\psi),
\end{eqnarray}
the equation takes the parabolic form
 \begin{eqnarray}
   \label{eq:PDEgen}
   \partial_t \psi=\mu(\nabla \psi)\sum_{i,j=1,2}\sigma_{i,j}(\nabla \psi)\frac{\partial^2}{\partial_{x_i}\partial_{x_j}}\psi
 \end{eqnarray}
 where parabolicity derives from convexity of $\sigma$. In general,
 the PDE \eqref{eq:PDEgen} is non-linear and the mobility is expected
 to be a non-trivial  function of the slope.

It is very difficult, in particular if $d>1$, to prove convergence to
a hydrodynamic equation starting from a non-trivial microscopic
model. Even worse, in general it is not possible to guess, even
heuristically, an explicit expression for the mobility; in fact, its expression as provided by the Green-Kubo formula involves an integral of
space-time correlations computed in the stationary states \cite{SpohnJSP}. Exceptions where
$\mu$ can be written down explicitly are usually models where the
dynamics satisfies some form of ``gradient condition'', i.e. the
microscopic current is the lattice gradient of some function
\cite{Spohn}.  The only example we know of a rigorous proof
of the hydrodynamic limit for a $d>1$ diffusive interface dynamics is
the work \cite{FunakiSpohn} by Funaki and Spohn. There, the Langevin dynamics for the
$d\ge 1$ Ginzburg-Landau model with symmetric and convex potential is
studied and the convergence to a hydrodynamic limit of the type \eqref{eq:PDEgen} is
proven. See also \cite{Nishikawa} where the analogous result is proven in a domain with Dirichlet instead of periodic boundary conditions. It is important to remark that for the Ginzburg-Landau model  the mobility turns
out to be a constant, that can be set to $1$ by a trivial time-change.

In dimension $d=1$, instead, natural Markov dynamics of discrete interfaces are
provided by conservative lattice gases on $\mathbb Z$ (e.g. symmetric exclusion
processes or zero-range processes), just by interpreting the number of
particles at site $x$ as the interface gradient $\phi(x)-\phi(x-1)$ at
$x$. Similarly, conservative continuous spin models on $\mathbb Z$ translate into Markov dynamics for one-dimensional interface models with continuous heights. Then, a hydrodynamic limit for the height function $\phi$ follows
from that for the particle density (see e.g. \cite[Ch. 4 and 5]{KL}
for the symmetric simple exclusion and for a class of zero-range
processes, and for instance \cite{Fritz} for the $d=1$ Ginzburg-Landau model). For $d>1$, instead, there is in general no natural way of
associating a height function to a particle system on $\mathbb Z^d$.

\medskip

In the present work, we study a two-dimensional stochastic interface
evolution for which we can obtain a hydrodynamic limit of the type
\eqref{eq:PDEgen}. Our model is very different from the
Ginzburg-Landau one. First of all, the interface is discrete (heights
take integer values) so that the dynamics is a Markov jump process
rather than a diffusion.  More importantly, the mobility coefficient
$\mu$ in the limit PDE is a non-constant (and actually non-linear)
function of the interface slope.

The state space of our Markov chain is the set of lozenge tilings of
the two-dimensional triangular lattice, or more precisely of its
$L\times L$  periodization.  Lozenge tilings of the triangular lattice are
well known to be in bijection with perfect matchings (aka dimer
coverings) of the dual lattice, which is the honeycomb lattice
$\mathcal H$. See Fig. \ref{fig:2n}.\begin{figure}[h]
  \includegraphics[width=7cm]{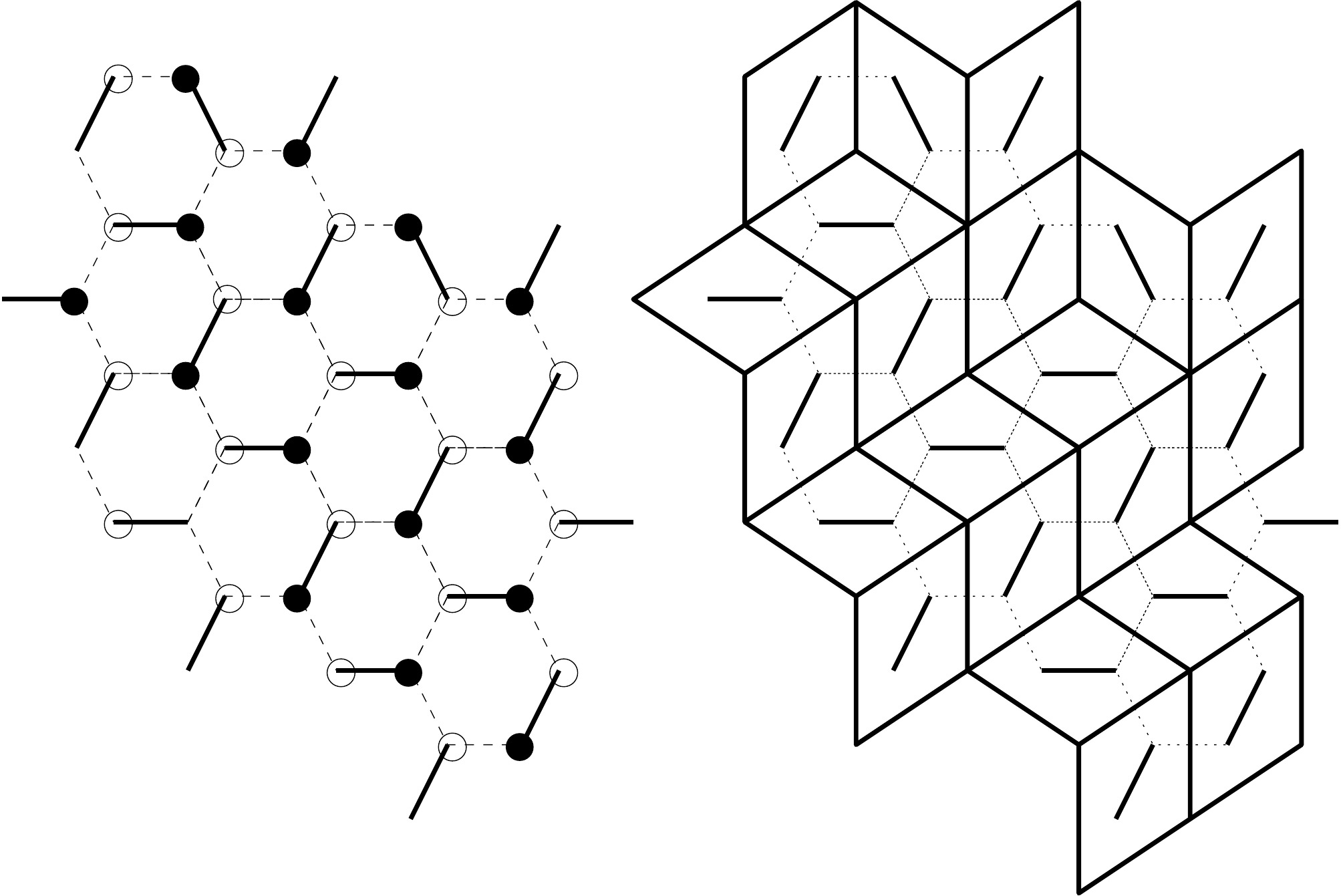}
  \caption{The bijection between dimer coverings of $\mathcal H$ (left) and lozenge tilings of the triangular lattice (right).  The horizontal dimers will be called ``particles''. }
\label{fig:2n}
\end{figure}
On the other hand, since $\mathcal H$ is planar and
bipartite, its perfect matchings are  in bijection with a
height function defined on its faces, see Fig. \ref{fig:monsurf}.  This height function will then
be our model of the discrete two-dimensional interface. 
\begin{figure}[h]
  \includegraphics[width=4cm]{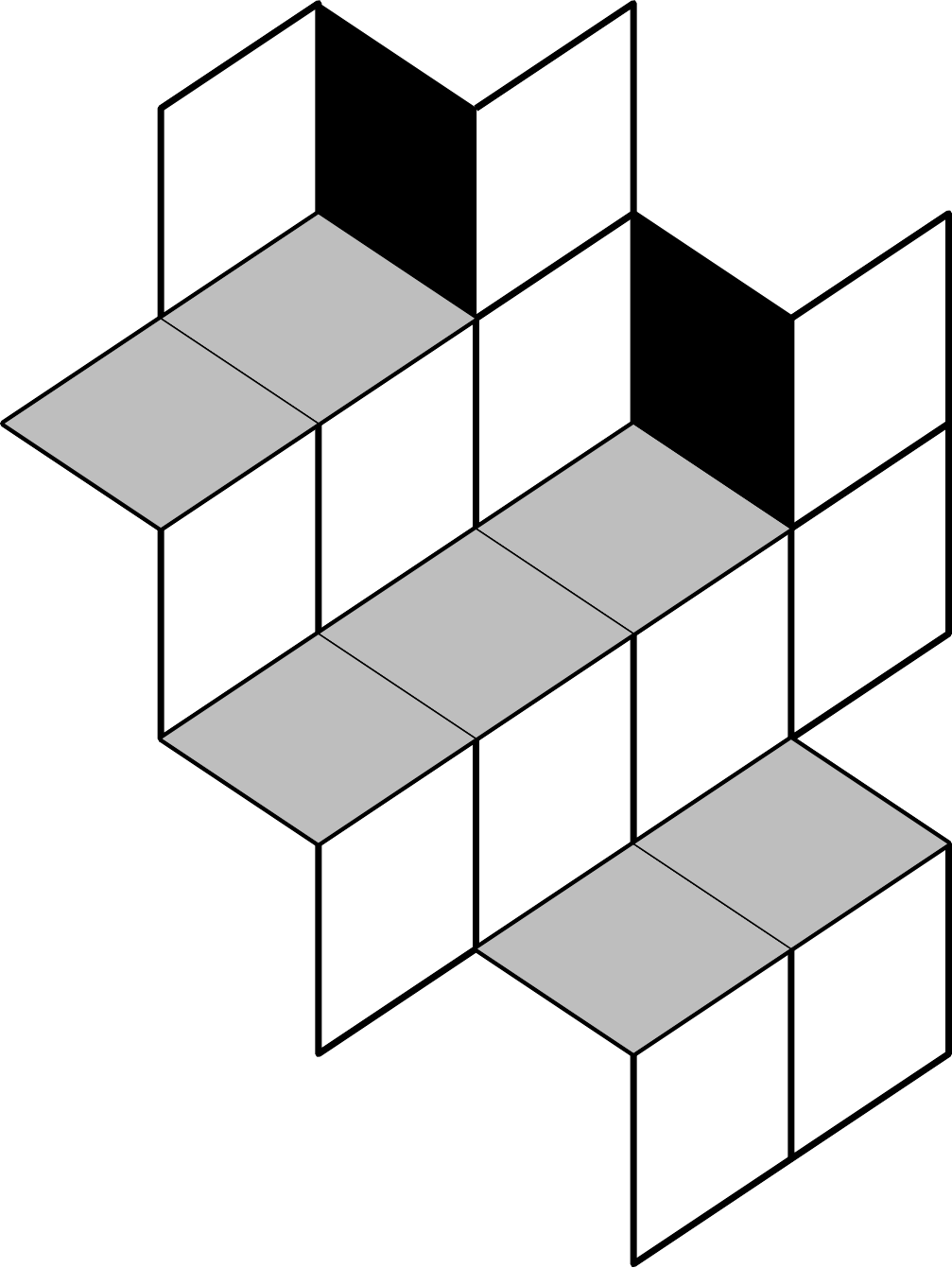}
  \caption{The lozenge tiling of Fig. \ref{fig:2n},  seen as a stepped two-dimensional surface representing the boundary of 
a pile of unit three-dimensional cubes. }
\label{fig:monsurf}
\end{figure}
 The dynamics we
study is reversible with respect to a two-parameter family of ergodic Gibbs measures, that are locally uniform measures 
on lozenge tilings. These measures are labelled by the two parameters of the interface slope  and have a determinantal representation \cite{KOS}. 
It is important to recall that such measures are far from looking like product Bernoulli measures: indeed, correlations decay like the inverse distance squared, while the height function itself tends in the large-scale limit to a two-dimensional
massless Gaussian field \cite{KOS,Kenyon}.

Starting with Section \ref{sec:model}  we will adopt the dimer instead of the tiling
point of view, and actually we will view the dimer configuration as an
interacting particle system (``particles'' being the horizontal dimers as in Figure \ref{fig:2n}). Such  particles satisfy particular interlacement
conditions, recalled in Section \ref{sec:model}.

\medskip

The updates of the dynamics  consist in a horizontal dimer
(or particle) jumping a certain distance $n$ vertically (up or down)
in the hexagonal lattice, and such a transition will be assigned a rate
$1/(2n)$ (the prefactor $1/2$ is there just to conform with the
previous literature). See Section \ref{sec:model} for a precise
definition. The jumps are not always allowed, not only because
particles cannot superpose, but also because jumps cannot violate the
above-mentioned interlacement constraints. This point is very
important: in fact, it is the interlacement constraints that are
responsible for the long-range correlations in the equilibrium
measure.  If instead we had only the exclusion constraint, the
equilibrium measure would be Bernoulli.

Let us call ``long-jump dynamics'' the lozenge tiling dynamics just
described, to distinguish it from the ``single-flip Glauber dynamics''
where only jumps of length $|n|=1$ are allowed. Let us mention that
the single-flip dynamics is equivalent to the Glauber dynamics of the
three-dimensional Ising model at zero temperature with Dobrushin
boundary conditions \cite{CMT}.  The long-jump dynamics has an
interesting story. It was originally introduced in \cite{LRS} with the
goal of providing a Markov chain that approaches the uniform measure
on tilings, in variation distance, in a time that is polynomial in the
system size $L$. In fact, the key point is that the mutual volume
between two interface configurations is a super-martingale which,
together with attractiveness of the dynamics, allows one to deduce
polynomial mixing via coupling arguments. Later, in \cite{Wilson} it
was proven that the total variation mixing time of the long-jump
dynamics is actually of order $L^2\log L$, and that (in special
domains) a particular one-dimensional projection of the height
function evolves according to the one-dimensional heat equation. The
results of \cite{LRS,Wilson} were used as a building block in
\cite{CMT,LTcmp} to prove that, under some conditions on the geometry
of the domain, the mixing time of the single-flip Glauber dynamics is
of order $L^{2+o(1)}$, like that of the long-jump dynamics. Finally,
in \cite{LThydro} we discovered that, in contrast with the single-flip
dynamics, the long-jump one satisfies certain identities, that allow
to conjecture an explicit form for the hydrodynamic equation,
cf. \eqref{eq:PDE} or equivalently \eqref{eq:PDE2}.  While the
dynamics does not satisfy a gradient condition in the usual sense that
the microscopic current is a discrete gradient, still a discrete
summation by parts causes the ``dangerous'' part in the mobility
coefficient, the one involving space-time correlations, to vanish
\cite[Sec. 4]{LThydro}.  A reflection of this fact is the summation by
parts that takes place in Proposition \ref{prop:id} below.

\smallskip

In the present
work, we prove rigorously the convergence to the 
hydrodynamic equation, in the case of periodic boundary
conditions, under suitable smoothness assumptions on the initial profile. See Theorem \ref{th:main}.

\medskip

The strong correlations in the invariant measures seem to prevent an
approach to the hydrodynamic limit problem based on the application of
classical methods going through one- and two-block estimates
\cite{Spohn,KL}.  Instead, broadly speaking, the proof of our result
follows a scheme that is similar to that of \cite{FunakiSpohn} for the
Ginzburg-Landau model, that builds on an extension of the so-called
$H^{-1}$ (norm) method first introduced by Chang and Yau \cite{CY}.  The
basic idea of the method is to prove that the time-derivative of the
${\mathbb L}^2$ distance between the solution of the PDE and the randomly
evolving interface is non-positive in the infinite volume limit.
There are however important differences between the application of the
method to the Ginzburg-Landau model and to ours, and here we mention
two of them (see also Remark \ref{rem:noRiesz}). First of all, one of
the key points in \cite{FunakiSpohn} is an a-priori ${\mathbb L}^2$ control of
interface gradients out of equilibrium, see Lemma 4.1 there, which is
based on a simple coupling argument, that works because the
interaction potential is assumed to be strictly convex. In our case
the analog would be an a-priori control of the variable we call $k$ in
\eqref{eq:ka}, that determines how far particles can jump. The
coupling argument however does not work, probably because the
interaction in our model \emph{is not} strictly convex, as witnessed
by the fact that the position of a particle given its neighbors is
uniformly distributed in the equilibrium state. Therefore, we  have
to proceed differently to get tightness of $k$, see e.g. Proposition
\ref{prop:contrk}.  Secondly, the fact that the mobility is constant
for the Ginzburg-Landau model played an important role in
\cite{FunakiSpohn} and was behind the fact that the time derivative of
the $\mathbb L^2$ distance between the randomly evolving interface and the
solution of the deterministic hydrodynamic equation is negative. In
fact, the negativity of this derivative is basically a consequence of
convexity of the surface tension, see \cite[Eq. (5.7)]{FunakiSpohn}
and subsequent discussion.  Contraction of the $\mathbb L^2$ norm holds also
for our model but it seems to be more subtle, as it requires
the negativity of a certain function (cf. \eqref{eq:Gg}), a fact that 
\emph{does not} follow simply from a thermodynamic convexity.

Our result raises interesting questions that we leave for future
research: notably, the study of space-time correlations of height
fluctuations around the limit PDE and the proof of the convergence to
the hydrodynamic limit when boundary conditions are not periodic,
especially when boundary conditions are such as to impose frozen
regions in the equilibrium macroscopic shape \cite{Kenyon}.

\subsection*{Organization of the article}
In Section \ref{sec:model}, we introduce more precisely our model and
we state our main result, Theorem \ref{th:main}: the convergence of the height function to
the solution of an explicit deterministic PDE. This equation is of parabolic, non-linear type and in
Section \ref{sec:PDE} we deduce existence and regularity of its
solution (i.e. statement (i) of Theorem \ref{th:main}) from known
results on parabolic PDEs. Sections \ref{sec:DLt} to \ref{sec:calcolo}, that are the bulk
of the work, contain the proof of item (ii) in Theorem \ref{th:main}, i.e. the hydrodynamic limit convergence statement itself. A
short reader's guide to the structure of the proof is given at the end
of Section \ref{sec:model}.

\section{Model and results}
\label{sec:model}
\subsection{Dimer configurations and height function}
Let $\mathcal H$ denote the infinite planar  honeycomb lattice and let
$\hat e_1,\hat e_2,\hat e_3$ be the unit vectors depicted in Figure \ref{fig:interlac}.  The
dual lattice $\mathcal T=\mathcal H^*$ is a triangular lattice.  We let
$\mathcal H_L$ be the periodization, with period $L$, of $\mathcal H$
in directions $\hat e_1,\hat e_2$. The dual graph of $\mathcal H_L$, denoted 
$\mathcal T_L$, is a periodized triangular lattice.

\begin{figure}[h]
  \includegraphics[width=9cm]{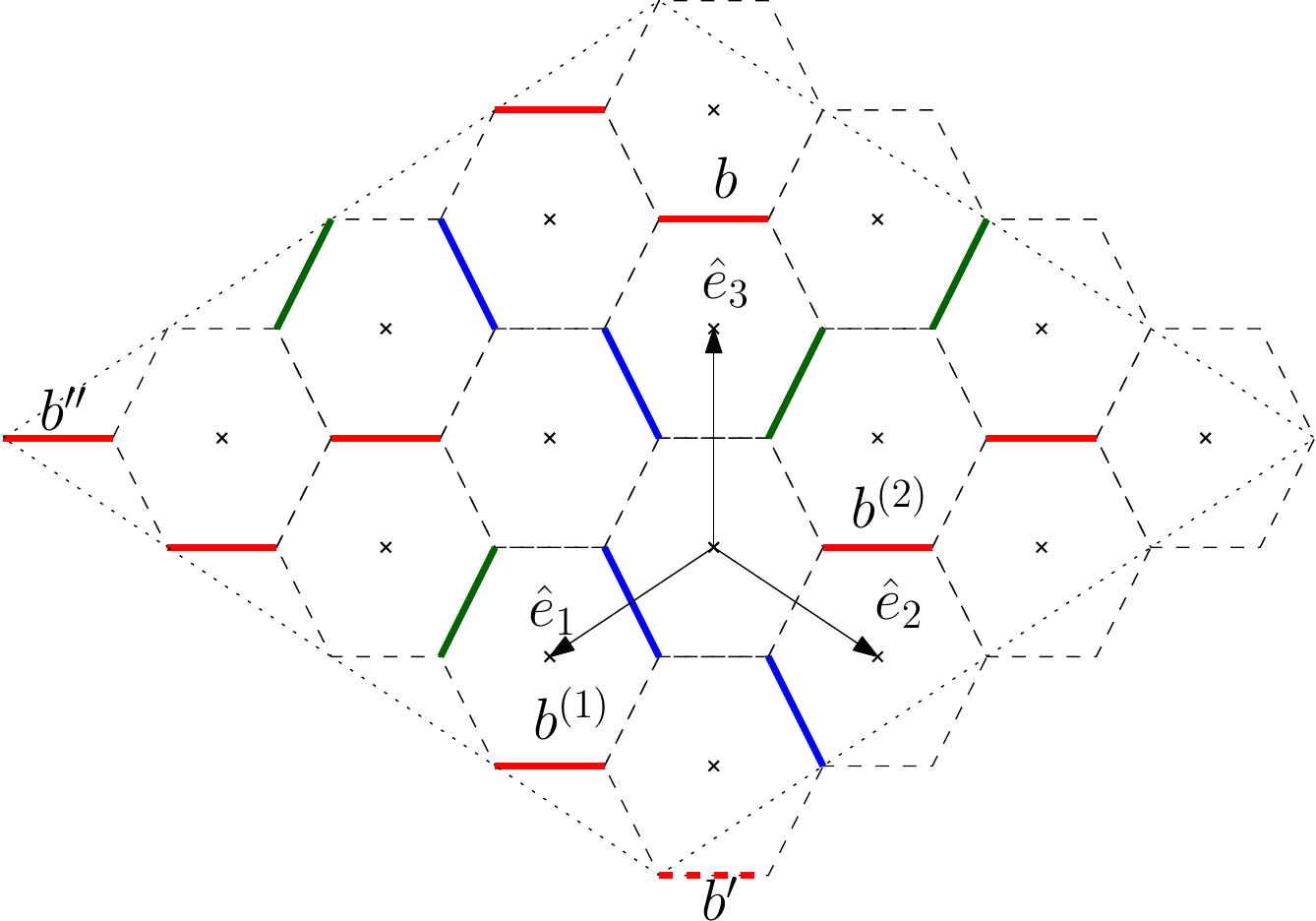}
  \caption{The hexagonal lattice $\mathcal H$ with the unit vectors
    $\hat e_i,i=1,2,3$.  The portion of $\mathcal H$ contained in the
    dotted region is the periodized lattice $\mathcal
    H_L,L=4$. Crosses are vertices of the periodized triangular dual
    lattice $\mathcal
    T_L$. 
    of $\mathcal H_L$.
    Thick edges are dimers of a perfect matching of $\mathcal H_L$
    (the matching can be extended to a $4$-periodic perfect matching
    of the infinite lattice $\mathcal H$).  Dimers are interlaced: for
    instance, type-3 (i.e., horizontal) dimers $b,b'$ are interlaced
    with type-3 dimers $b^{(1)}, b^{(2)}$. (Note that dimer $b'$ is
    the same (by periodicity) as dimer $b''$; that is why it is drawn
    as dashed). In this drawing there are exactly $4$ dimers of types
    $1$ and $2$ and $8$ dimers of type $3$.
    }
\label{fig:interlac}
\end{figure}

On $\mathcal T_L$ we introduce coordinates $u=(u_1,u_2)$ with
$0\le u_i\le L-1$ by assigning coordinates $(0,0)$ to an arbitrarily
fixed vertex of $\mathcal T_L$ (the ``origin'') and letting
$\hat e_1,\hat e_2$ have coordinates $(1,0)$ and $(0,1)$
respectively. Note that the ``unit vector'' $\hat e_3$ has then
coordinates $\hat e_3=(-1,-1)$.

The bipartite graph $\mathcal H_L$ has $2L^2$ vertices and every perfect matching $\eta$ of $\mathcal H_L$ contains exactly $L^2$ edges. Edges in a  perfect matching $\eta$ are called dimers and $\eta$ is referred to as a dimer covering.  We will say that an edge (or a dimer) is of type $i=1,2,3$ if it is
parallel to the edge crossed by $\hat e_i$. The set $\Omega_L$ of all perfect
matchings of $\mathcal H_L$ can be decomposed according to the number
of dimers of the three types, as follows.  Given
$\bar\rho^{(L)}=(\bar\rho^{(L)}_1,\bar\rho^{(L)}_2)$ with
$\bar\rho^{(L)}_i\in(0,1)$, $\sum_{i=1}^2 \bar\rho_i^{(L)}\in(0,1)$
and $L\bar\rho^{(L)}_i\in\mathbb N$, let $\Omega_{\bar\rho^{(L)}}$ be
the set of perfect matchings of $\mathcal H_L$ with
$L^2 \bar\rho^{(L)}_i$ dimers of type $i=1,2$. Note that the number of
dimers of type $3$ is then constrained to be
$L^2 \bar\rho^{(L)}_3:=L^2(1-\bar\rho^{(L)}_1-\bar\rho^{(L)}_2)$.

Let $\tau_u$ denote the translation by $u=(u_1,u_2)$.
We recall a few known (and easy to verify) facts:
\begin{enumerate}
\item Given the locations of all dimers of a given type $i\in\{1,2,3\}$, the whole dimer covering $\eta$ is uniquely determined. In the following, we will call dimers of type $3$ (i.e. the horizontal ones) ``particles''. 

\item
The positions of particles in a vertical column of hexagons are interlaced with those in the two neighboring columns. More explicitly,
if there is a particle at a horizontal edge
  $b$ and another  at the edge
  $b'=\tau_{-n \hat e_3} b$ for some $n>0$ (note that $b,b'$ are  in the same column), then 
  for $j=1,2$ there  is a particle at an 
  edge $b^{(j)}=\tau_{\hat e_j-r_j \hat e_3}b$, for some
  $r_j\in\{0,\dots, n-1\}$.  
  See
  Figure \ref{fig:interlac} and \ref{fig:Ip}. Similar interlacing conditions hold for dimers of types $1$ and $2$. 
\item Every cycle of length $L$ on $\mathcal T_L$ in direction
  $\hat e_i$ crosses the same number of dimers of type $i$ (and
  therefore $L \bar\rho^{(L)}_i$ of them). This is a consequence of
  property (2).

\end{enumerate}

We introduce a height function $H_\eta(\cdot)$ (defined on $\mathcal T_L$, i.e. on hexagonal faces of $\mathcal H_L$) for configurations  $\eta\in\Omega_{\bar\rho^{(L)}}$.
We first need some notation:
\begin{Definition}
\label{def:Ai}
Given $u\in \mathcal T_L$ and $i=1,2,3$, let $b_i(u)$ be the edge
(necessarily of type $i$) of $\mathcal H_L$ that is crossed by the
edge of $\mathcal T_L$ that joins $u$ to $u+\hat e_i$.
\end{Definition}
 Then, we establish:
 \begin{Definition}[Height function]
\label{def:height}
 For every $\eta\in \Omega_{\bar\rho^{(L)}}$ we set $H_\eta(0,0)=0$ and
\begin{eqnarray}
  \label{eq:gradh}
  H_\eta(u+\hat e_i)-H_\eta(u)=-\bar\rho^{(L)}_i+{\bf 1}_{b_i(u)\in \eta},\quad i=1,2,3.
\end{eqnarray}
 \end{Definition}
 The height function is well defined, i.e. the sum of the gradients
 along any closed cycle is zero. Indeed, when the cycle has trivial
 winding numbers around the torus $\mathcal T_L$ it suffices to verify
 that the total height change is zero along any elementary cycle of
 the type
 \[u\to u+\hat e_1\to u+\hat e_1+\hat e_2\to u+\hat e_1+\hat e_2+\hat e_3=u.\]
In this case, the height change is
\[
-\sum_{i=1,2,3}\bar\rho^{(L)}_i+{\bf 1}_{b_1(u)\in \eta}+{\bf 1}_{b_2(u+\hat e_1)\in\eta}+{\bf 1}_{b_3(u+\hat e_1+\hat e_2)\in\eta}=0
\]
because exactly one of the three crossed edges, that share a common vertex, is occupied by a dimer
and $\sum_{i=1,2,3} \bar\rho^{(L)}_i=1$. When instead the cycle winds once around
$\mathcal T_L$ following direction $\hat e_i$, then the height change
is zero because exactly $L\bar\rho^{(L)}_i$ dimers of type $i$ are
crossed (property (3) above).   The case of a general cycle easily follows.

Let $\mathbb T\subset \mathbb R^2$ be the \emph{open} triangle of vertices
$(0,0),(1,0),(0,1)$, so that $\bar\rho^{(L)}\in\mathbb T$.  Assume
that the sequence $\{\bar\rho^{(L)}\}_{L\in\mathbb N}$ converges to
some $\bar\rho\in \mathbb T$ and that a sequence
$\{\eta_0^{(L)}\}_{L\in\mathbb N}$ of configurations in
$\Omega_{\bar\rho^{(L)}}$ approximates a smooth periodic function
$\psi$, in the sense that
\begin{eqnarray}
  \label{eq:iniziale}
  \lim_{L\to\infty} \frac1L H_{\eta_0^{(L)}}(\lfloor u L\rfloor)=\psi(u), \; \text{for every } u\in [0,1]^2.
\end{eqnarray}
Then, from the definition of height function we see that
\begin{eqnarray}
  \label{eq:iniz2}
\nabla\psi(u)+(\bar \rho_1,\bar\rho_2)\in \mathbb T\cup\partial \mathbb T,\; \text{for every } u\in [0,1]^2.  
\end{eqnarray}

\subsection{The dynamics}
\label{sec:thedyn}
We consider a Markov dynamics on  $\Omega_{\bar\rho^{(L)}}$, that was introduced in \cite{LRS} and later studied in \cite{Wilson} (in both these references, the dynamics is defined on the set of dimer coverings of a planar, rather than periodized, subset of $\mathcal H$). Also, with respect to \cite{LRS,Wilson} we multiply transition rates by $L^2$, in order to avoid rescaling time in the hydrodynamic limit.

Recall from the previous section that the configuration $\eta$ is fully determined by the positions of the $L^2\bar\rho^{(L)}_3$ dimers of type $3$, or particles. The dynamics will be then defined directly in terms of particle 
moves.

Assign a label $p=1,\dots, L^2\bar\rho^{(L)}_3$ to particles. Given a
configuration $\eta$ and a particle label $p$, let $I^+(p)\ge0$
(resp. $I^-(p)\ge0$) be the maximal displacement particle $p$ can take
in direction $+\hat e_3$ (resp. $-\hat e_3$), without violating the interlacement constraints, when all other particles
are kept fixed, cf.
Fig. \ref{fig:Ip}.
\begin{figure}[h]
  \includegraphics[width=2cm]{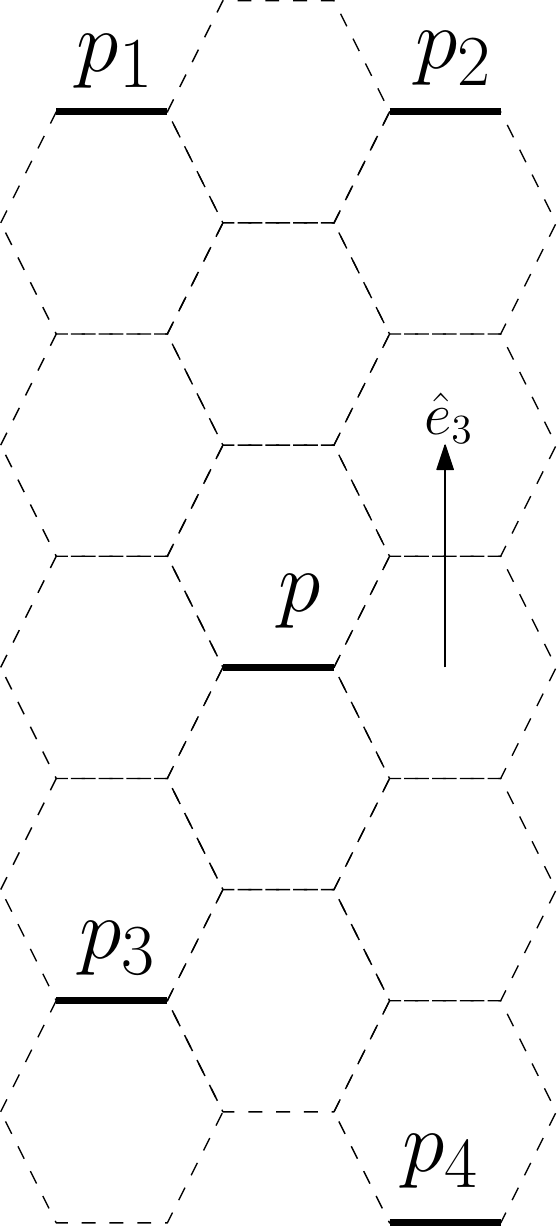}
  \caption{If particles $p_1,\dots,p_4$ are kept fixed, then particle $p$ can be moved in direction $+\hat e_3$ (resp. $-\hat e_3$) by at most $2$ steps (resp. $1$ step). Therefore, $I^+(p)=2, I^-(p)=1$.  }
\label{fig:Ip}
\end{figure}

\begin{Definition}[Transition rates]
The possible updates of the dynamics are the following:
\begin{itemize}
\item a particle $p$ moves by $ n \hat e_3, 0<n\le I^+(p)$. This transition has rate $L^2/(2n)$.
\item a particle $p$ moves by $ -n \hat e_3, 0<n\le I^-(p)$. This transition has
also  rate $L^2/(2n)$.

\end{itemize}
 
\end{Definition}
Observe  that as soon as
there are at least two particles in each cycle in direction $\hat e_3$
(which is certainly the case in our setting for $L$ large, since this number is
$L \bar\rho^{(L)}_3$ and we  assume  that
$\rho^{(L)}_3$ tends to a positive constant),  a particle $p$
cannot reach the same position via a jump by
$ +n \hat e_3, 0<n\le I^+(p)$ as with a jump by
$ -m \hat e_3, 0<m\le I^-(p)$.  Therefore, even if we are on the torus
it makes sense to distinguish between ``upward'' and ``downward''
particle jumps.

The configuration at time $t$ will be denoted by $\eta(t)$ and the law
of the process by $\mathbb P$.  The uniform measure $\pi_L$ on
$\Omega_{\bar\rho^{(L)}}$ is reversible (because transition rates are
symmetric) and actually it is the unique stationary measure, since the
Markov chain is ergodic (that any configuration in
$\Omega_{\bar\rho^{(L)}}$ can be reached from any other via particle
jumps of size $1$ is a classical argument \cite{LRS} based on the
definition of height function; for a proof in the periodized setting,
see also \cite[Lemma 1]{CT}).

We will formulate a hydrodynamic limit result for the height function.
However, recall that only height increments are really defined by the
particle configuration and one has to make a choice of an arbitrary
global additive constant. In the definition of $H_\eta$ above we fixed
such constant so that the height is zero at the origin.  In the
dynamical setting the natural choice is to fix the height at the
origin to be the ``integrated current at time $t$'' (which is zero at
initial time):
\begin{Definition}[Integrated current]
\label{def:J}
For $u\in \mathcal T_L$ and $t>0$ we define the integrated current $J(u,t)$ as the number of particles that cross $u$ downward minus the number of particles that cross $u$ upward in the time interval $[0,t]$.
Also, we let
\begin{eqnarray}
\label{latamm}
  H(u,t)=H_{\eta_0}(u)+J(u,t).
\end{eqnarray}  
\end{Definition}
\begin{Remark}
\label{rem:1}
Note  that $H(\cdot,t)$ is just the height function $H_{\eta(t)}$ (in the sense of Definition \ref{def:height}) of the configuration at time $t$, up to the global constant $H(0,t)=J(0,t)$.
In particular, the analog of \eqref{eq:gradh} holds at time $t$:
\begin{eqnarray}
  \label{eq:gradh2}
  H(u+\hat e_i,t)-H(u,t)=-\bar\rho^{(L)}_i+{\bf 1}_{b_i(u)\in \eta(t)}.
\end{eqnarray}
\end{Remark}

To understand the rationale behind Definition \ref{def:J} note first of all that, in terms
of dimer covering, moving a particle by $\pm \hat e_3$ corresponds to
performing an elementary rotation of three dimers around a hexagonal
face $u$ of $\mathcal H_L$. Say that $u$ is not the origin, where $H_\eta$ is fixed to zero. Then, according to Definition \ref{def:height}, the height at $u$ changes by $\mp1$ as an effect of the rotation. Similarly, a move by $\pm m\hat e_3$
corresponds to the concatenation of $m$ rotations around $m$ adjacent,
vertically stacked, hexagonal faces. See Figure
\ref{fig:rotazioni}. Correspondingly, when particle $p$ jumps by
$\pm n\hat e_3$, the function $H(u,t)$ defined as in \eqref{latamm} changes by
$\mp 1$ at the $n$ positions crossed by the particle.

\begin{figure}[h]
  \includegraphics[width=7cm]{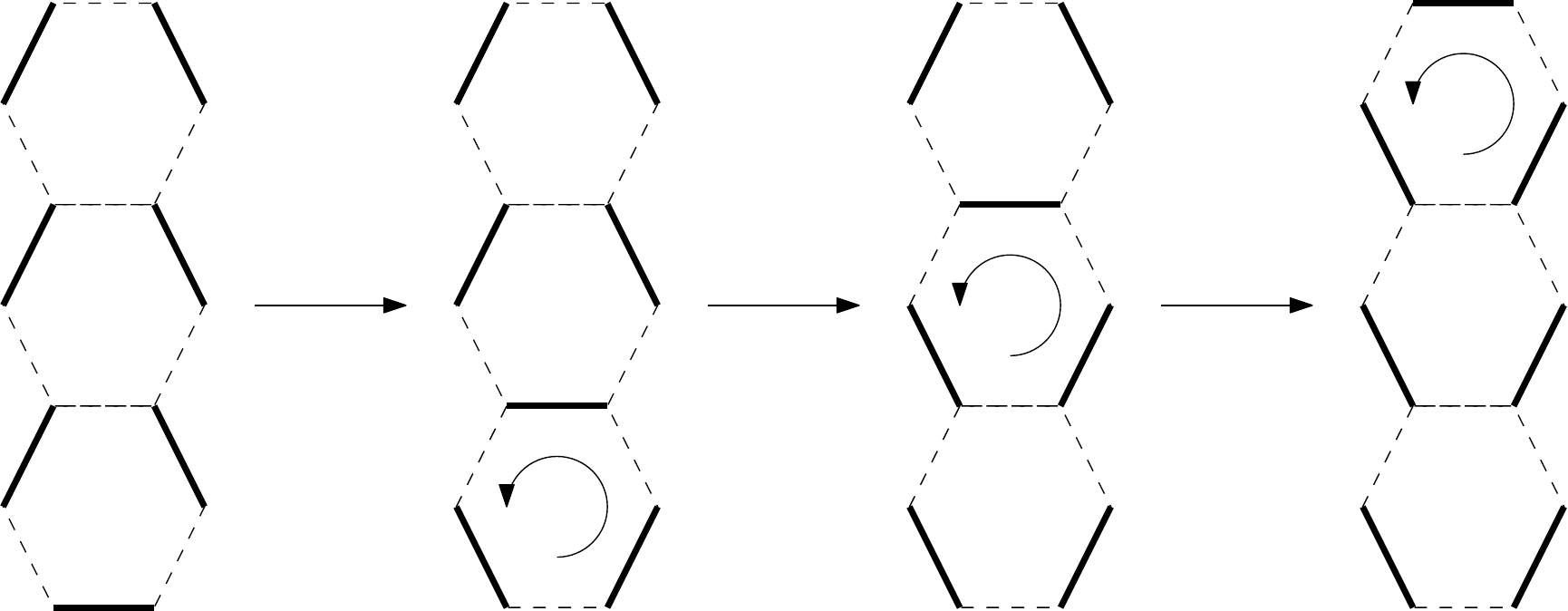}
\caption{A particle jump of length $n$ ($n=3$ here) corresponds to $n$ elementary dimer rotations in $n$ vertically stacked
hexagonal faces.}
\label{fig:rotazioni}
\end{figure}

\subsection{Hydrodynamic limit}
We assume that the initial condition of the dynamics approximates a smooth profile, in the sense of  \eqref{eq:iniziale}, but we impose a  condition stronger than \eqref{eq:iniz2} on the gradient:
\begin{Assumption}
\label{ass:eta0}
Let $\bar\rho=(\bar\rho_1,\bar\rho_2)\in \mathbb T$ be fixed. The initial condition $\eta_0=\eta_0^{(L)}$ belongs to $\Omega_{\bar\rho^{(L)}}$ and $\bar\rho^{(L)}\to\bar\rho$ as $L\to\infty$. 
Moreover, there exists a compact, convex subset $\mathcal A\subset \mathbb T$ and a periodic, $C^2$ function $\psi_0$ on the torus   $[0,1]^2$ satisfying $\psi_0(0,0)=0$ and 
\begin{eqnarray}
  \label{eq:etazero}
\nabla \psi_0(u)+\bar\rho\in \mathcal A,   \quad\forall u\in[0,1]^2
\end{eqnarray}
such that \eqref{eq:iniziale} holds.
\end{Assumption}
Our goal is to prove a hydrodynamic limit (in the diffusive scaling) for the height function $H$, under Assumption \ref{ass:eta0}.
See Remark \ref{rem:regolarita} below for a discussion of whether such an assumption can be relaxed.

First, we define a function $W=(W_1,W_2):\mathbb T\mapsto \mathbb R^2$ as
\begin{equation}
  \begin{array}{ll}
  W_1(x,y)=-\frac1{2\pi}\cot(\pi(x+y))\sin^2(\pi y)-\frac y4+\frac1{4\pi}\sin(2\pi y)\\
W_2(x,y)=W_1(y,x).   
  \end{array}
 \end{equation}

Our main result is the following:
\begin{Theorem}
\label{th:main}
  Let $\eta_0=\{\eta_0^{(L)}\}_L$ satisfy Assumption \ref{ass:eta0}. Then:
  \begin{enumerate}
  \item [(i)] There exists 
  a unique smooth solution  of the Cauchy problem
   \begin{eqnarray}
     \label{eq:PDE}
\left\{     \begin{array}{l}
       \partial_t \psi(u,t)={\rm div}(W(\nabla \psi(u,t)+\bar \rho)), \quad (u,t)\in[0,1]^2\times [0,\infty)\\
\psi(u,0)=\psi_0(u).
     \end{array}
\right.
   \end{eqnarray} 
The function $\psi$ is  $C^{2,1}$  ($C^2$ in space and $C^1$ in time) and its gradient $\nabla\psi$ is continuous in time.
   Moreover,  $\nabla \psi(u,t)+\bar\rho\in \mathcal A$ for every $u\in [0,1]^2,t\ge0$.
 \item [(ii)] For every $t>0$
   \begin{eqnarray}
     \label{eq:limite}
     \lim_{L\to\infty}D_L(t):= \lim_{L\to\infty}\frac1{L^2}\mathbb E\sum_{u\in\mathcal T_L}\left[
\frac{H(u,t)}L-\psi\left(\frac uL,t\right)
\right]^2=0.
   \end{eqnarray}

  \end{enumerate}
\end{Theorem}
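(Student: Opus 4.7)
The function $W$ is, by construction, the gradient (up to a rotation of coordinates) of the dimer surface tension $\sigma$ on $\mathbb T$, which is strictly convex \cite{KOS,Kenyon}. Hence the Jacobian $DW$ is symmetric and uniformly positive-definite on the compact set $\mathcal A\subset\mathbb T$, so \eqref{eq:PDE} is a quasilinear Cauchy problem, uniformly parabolic as long as $\nabla\psi+\bar\rho$ stays in $\mathcal A$. Classical Ladyzhenskaya--Solonnikov--Uraltseva theory then yields a local-in-time $C^{2,1}$ solution whose gradient is continuous in time, and a componentwise maximum principle applied to the equation satisfied by $\nabla\psi$ shows that the convex invariant region $\mathcal A$ is preserved, giving global existence.

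\textbf{Part (ii): setup and main identity.} Following the $\mathbb L^2$-contraction strategy of Funaki--Spohn \cite{FunakiSpohn} and Chang--Yau \cite{CY}, the plan is to show $\frac{d}{dt}D_L(t)\le o(1)+C\,D_L(t)$ and conclude via Gronwall using $D_L(0)\to 0$ from Assumption \ref{ass:eta0}. Applying Dynkin's formula to
\[
\Phi_t:=\frac{1}{L^2}\sum_u\bigl(H(u,t)/L-\psi(u/L,t)\bigr)^2,
\]
one decomposes $\frac{d}{dt}D_L(t)=\mathbb E[\mathcal L_L\Phi_t+\partial_t\Phi_t]$ into three contributions: a carr\'e-du-champ part which a direct estimate bounds by $O(k_{\max}^2/L^2)$ and is therefore $o(1)$ provided the jump-length variable $k$ of \eqref{eq:ka} is controlled a priori (cf.\ Proposition \ref{prop:contrk}); a drift term $\frac{2}{L^3}\mathbb E\sum_u(H/L-\psi)\,\mathcal L_L H(u,t)$; and the compensation $-\frac{2}{L^2}\mathbb E\sum_u(H/L-\psi)\,\partial_t\psi$. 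The essential input is Proposition \ref{prop:id}: via a discrete summation by parts, $\mathcal L_L H$ rewrites as a discrete divergence of a microscopic current whose expectation under the ergodic Gibbs state of slope $s$ equals exactly $W(s)$.

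\textbf{Replacement, closure, and main obstacle.} After one further discrete integration by parts, the drift plus compensation reorganize to
\[
-\frac{2}{L^2}\mathbb E\sum_u\nabla(H/L-\psi)\cdot\bigl(j_{\mathrm{mic}}-W(\nabla\psi+\bar\rho)\bigr)+o(1),
\]
and a local-equilibrium replacement lemma (established in the bulk sections) should allow one to replace $j_{\mathrm{mic}}$ by $W(\nabla H/L+\bar\rho)$ up to errors of size $o(1)+C D_L(t)$. This replacement is considerably more delicate than in the Ginzburg--Landau case \cite{FunakiSpohn}: the $1/r^2$-decay of correlations in the Gibbs states rules out the classical one- and two-block estimates, and the coupling argument that controls gradients in \cite{FunakiSpohn} is unavailable because the dimer interaction is not strictly convex; instead one must exploit the determinantal structure of \cite{KOS} together with alternative a priori bounds on $k$. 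After the replacement one is left with the ``tested monotonicity'' term
\[
-\frac{2}{L^2}\mathbb E\sum_u\nabla(H/L-\psi)\cdot\bigl(W(\nabla H/L+\bar\rho)-W(\nabla\psi+\bar\rho)\bigr),
\]
and the technical heart of the paper is to show that this expression is nonpositive up to $o(1)+C D_L(t)$. For Ginzburg--Landau this is immediate from convexity of $\sigma$; here the non-constant mobility makes $W\neq\nabla\sigma$, so nonpositivity reduces to verifying that the specific function appearing in \eqref{eq:Gg} has the correct sign, a property that does not follow from thermodynamic convexity alone. Once this sign, the replacement lemma, and the a priori control on $k$ are in place, Gronwall closes the estimate and yields $D_L(t)\to 0$ for every $t>0$.
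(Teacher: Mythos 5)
Your Part (i) is broadly the paper's argument (quasilinear parabolic theory plus an invariant-region/maximum-principle argument for $\nabla\psi$), but one claim is off: $W$ is \emph{not} $\nabla\sigma$ up to a rotation. The PDE rewrites as $\partial_t\psi=\mu(\nabla\psi+\bar\rho)\sum\sigma_{ij}\partial_i\partial_j\psi$ with a genuinely nonconstant mobility $\mu$, so the Jacobian $DW$ is $\mu\,D^2\sigma$-like and need not be symmetric; the paper instead computes the trace and determinant of the symmetrized Jacobian $\tilde H_W$ directly (\eqref{eq:25aa}--\eqref{eq:26aa}) to get the required uniform parabolicity on $\mathcal A$.

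For Part (ii) the high-level $H^{-1}$ strategy you cite is indeed the paper's, but two concrete steps in your plan fail. First, the ``carr\'e-du-champ'' piece of $\mathbb E[\mathcal L_L\Phi_t]$ is \emph{not} $o(1)$: a jump of length $k$ at site $v$ changes $H$ by $\pm1$ at $k$ sites with rate $L^2/(2k)$, so the quadratic contribution to $\frac{d}{dt}D_L(t)$ is
\begin{equation*}
\frac{1}{L^4}\sum_u\Gamma(H_u)=\frac{1}{2L^2}\sum_{u}\lvert\epsilon(u,\eta(t))\rvert,
\end{equation*}
which is $O(1)$ and is exactly the $\frac{|\epsilon|}{2}$ term in the paper's $C_1$ in \eqref{eq:C1}; it is indispensable in the final algebraic identity and cannot be discarded. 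Second, the replacement $j_{\mathrm{mic}}\rightsquigarrow W(\nabla H/L+\bar\rho)$ is ill-posed: the lattice gradient $\hat\nabla_i H(u)+\bar\rho_i^{(L)}\in\{0,1\}$ sits on $\partial\mathbb T$, where $W$ is singular, so the ``tested monotonicity'' expression you write is not even defined before a coarse-graining step. The paper's actual route avoids any replacement lemma and any Gronwall closure: it fixes a mesoscopic space-time grid of $N^3$ boxes, passes to subsequential weak limits $\nu_{t,(i,j)}$ of the corresponding space-time-averaged laws, shows via a Dirichlet-form/entropy-production estimate that each $\nu_{t,(i,j)}$ is a translation-invariant Gibbs measure (Lemmas \ref{le:prod_tore}--\ref{le:claiGibbs}), decomposes it into ergodic Gibbs states (Theorem \ref{th:Georgii}), and then evaluates the expectation of $C^{(i,j)}+U_M^{(i,j)}$ explicitly using determinantal formulas \eqref{LT1}--\eqref{LT2}, reducing matters to the sign of $G$ in \eqref{eq:Gg}. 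The monotonicity $(W(a)-W(b))\cdot(a-b)\ge0$ you identify is indeed the final sign condition, but the slope $a$ that appears there is the density of a limiting ergodic Gibbs state, not $\nabla H/L+\bar\rho$, and you reach it through the Gibbs decomposition, not a replacement lemma.
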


While it is not at first sight obvious that  the PDE \eqref{eq:PDE} is a parabolic equation, it was shown in  \cite[Sec. 3]{LThydro}
that it can be equivalently rewritten as
\begin{eqnarray}
  \label{eq:PDE2}
  \partial_t \psi=\mu(\nabla \psi+\bar\rho)\sum_{i,j=1}^2\sigma_{i,j}(\nabla\psi+\bar\rho)\frac{\partial^2}{\partial_{u_i}\partial_{u_j}}\psi
\end{eqnarray} 
with $\mu(\cdot)>0$ and, for every $\rho\in\mathbb T$, $\{\sigma_{i,j}(\rho)\}_{i,j=1,2}$ a
strictly positive-definite matrix, that is the Hessian
of a strictly convex surface tension function. 
More precisely \cite{LThydro},
\begin{eqnarray}
  \label{eq:4mu}
  \mu(\rho)=\frac1{2\pi} \frac{\sin(\pi\rho_1)\sin(\pi \rho_2)}{\sin(\pi(1-\rho_1-\rho_2))}
\end{eqnarray}
while $\sigma_{i,j}(\rho)=\partial^2_{\rho_i \rho_j}\sigma(\rho)$, 
\begin{eqnarray}
  \label{eq:sigma}
  \sigma(\rho)=\left\{
    \begin{array}{cc}
\frac1\pi\left[
\Lambda(\pi \rho_1)+\Lambda(\pi \rho_2)+\Lambda(\pi(1-\rho_1-\rho_2))\right]\le0,& \rho\in \mathbb T\cup \partial \mathbb T\\
      +\infty & \quad \text{otherwise}
    \end{array}
\right.
\end{eqnarray}
with
\[
\Lambda(\theta)=\int_0^{\theta} \ln (2 \sin(t))dt.
\]
Note that $\sigma(\cdot)$ is the well-known surface tension for the
dimer model on the hexagonal lattice as a function of the slope,
cf. for instance \cite[Sec. 7]{Kenyon}, and $-\Lambda(\cdot)$ is the
so-called Lobachevsky function.  Let us recall that, as well known and discussed for instance in
\cite[Sec. 3]{LThydro}, $\sigma(\cdot)$ is $C^\infty$, strictly
negative and strictly convex in $\mathbb T$, and note that
$\mu(\cdot)$ is also $C^\infty$ and strictly positive in $\mathbb T$.
Moreover, when $\rho$ approaches $\partial\mathbb T$ the function
$\sigma(\rho)$ vanishes, the functions $\mu(\rho)$ and
$\sigma_{i,j}(\rho)$ become singular and $\{\sigma_{i,j}\}_{i,j=1,2}$
loses the strict positive definiteness. Since however
$\nabla\psi+\bar\rho$ stays in $\mathcal A\subset\mathbb T$ at all
times, these potential singularities do not affect the solution of \eqref{eq:PDE2}.

\begin{Remark}
  \label{rem:regolarita}
  Recall that the triangle $\mathbb T$ is open.  Condition
  \eqref{eq:etazero} means that we are requiring that
  $\nabla\psi_0+\bar\rho$ is uniformly away from $\partial\mathbb T$,
  the boundary of the set of allowed slopes. This is not just a
  technical assumption; indeed the coefficients $\mu(\cdot)$ and
  $\sigma_{i,j}(\cdot)$ of the hydrodynamic PDE \eqref{eq:PDE2} are
  singular on $\partial \mathbb T$ and it is not clear that its
  solution is well defined in general if \eqref{eq:etazero} fails for
  the initial condition.  On the other hand, the $C^2$ condition can
  certainly be relaxed, along the following lines. Assume $\psi_0$ is
  Lipschitz and satisfies \eqref{eq:etazero}; approximate it, within
  distance $\epsilon$ in sup norm, by $C^2$ functions $\psi^\pm$, with
  $\psi^-\le \psi_0\le\psi^+$, that verify \eqref{eq:etazero} for some
  compact sets $\mathcal A^\pm$, close to $\mathcal A$ in Hausdorff
  distance. Theorem \ref{th:main} holds for initial conditions
  $\eta_0^\pm$ that tend to $\psi^\pm$; since the dynamics is
  attractive, i.e. preserves stochastic ordering between height
  profiles, the hydrodynamic limit for initial condition $\eta_0$ can
  be obtained by letting $\epsilon\to0$ after $L\to\infty$.  To avoid
  overloading this work, we prefer not to work out in full detail the
  argument we just outlined, and to state the main result under the
  $C^2$ assumption.
\end{Remark}

\subsection{Short sketch of the proof of the main Theorem}
The proof of item (ii) of Theorem \ref{th:main} is the object of
Sections \ref{sec:DLt} to \ref{sec:calcolo} and, in its general lines,
follows the ideas of the $H^{-1}$ norm method as in \cite{FunakiSpohn}.  The point is
to study
\begin{equation}
  \label{eq:strategy}
D_L(t)-D_L(0)  
\end{equation}
and to show that it is non-positive in the $L\to\infty$ limit.  The
first step, accomplished in Section \ref{sec:DLt}, is to note that one can
rewrite \eqref{eq:strategy} as  the average, with respect to certain
space-time averaged limiting measures $\nu_{t, (i,j)}$, of a function
depending  only on the local gradients of $H$ and of $\psi$. This step would not
be possible e.g. for the single-flip Glauber dynamics, and the fact
that it works in our case is the signature of the above-mentioned ``gradient condition''.  The
second step, in Section \ref{sec:dec}, is to argue that the measures
$\nu_{t, (i,j)}$ have to be translation invariant Gibbs measures
because of a general entropy production argument. This allows us to
rewrite them as some linear combinations of ergodic, translation
invariant measures with (unknown) density $w_{\nu_{t (i,j)}}$.
Finally, in Section \ref{sec:calcolo}, thanks to the exact solvability
of  the uniform dimer covering model, we rewrite \eqref{eq:strategy}
as an explicit function of the density $w_{\nu_{t, (i,j)}}$. A non-trivial algebraic identity then implies that this function is
non-positive independently of $w_{\nu_{t, (i,j)}}$, which concludes
the proof. This identity is related to the fact that the limit PDE
contracts the $\mathbb L^2$ distance which, as we mentioned in the
introduction, is also a remarkable and a-priori not obvious property of our model.

\section{The limit PDE}
\label{sec:PDE}
Item (i) in Theorem \ref{th:main} is a consequence of rather
  classical results in the theory of non-linear parabolic PDEs, cf. for instance \cite{Liberman}. We still discuss
  it briefly, mostly to show why the fact that the coefficients of the equation are singular on the boundary of  $\mathbb T$ does not affect the regularity of the solution.

\medskip
Consider the Cauchy problem
\begin{eqnarray}
  \label{eq:PDE3}
\left\{
  \begin{array}{l}
  \partial_t \phi=\kappa(\nabla \phi)\sum_{i,j=1}^2g_{i,j}(\nabla\phi)\frac{\partial^2}{\partial_{u_i}\partial_{u_j}}\phi, \;u\in[0,1]^2, t\ge0\\
\phi(u,0)=\phi_0(u), 
  \end{array}
\right.
\end{eqnarray} 
where $\phi_0(\cdot)$ is a $C^2$ periodic function on  $[0,1]^2$. Here, 
 $\kappa(\cdot)$ is a strictly positive and $C^\infty$ function on $\mathbb R^2$, while $g_{i,j}(x)=\partial^2_{x_i x_j}g(x)$, for a strictly convex, $C^\infty$ function $g:\mathbb R^2\to \mathbb R$. We further assume that, letting 
$g^{(2)}(x):=\{g_{i,j}(x)\}_{i,j=1,2}$, one has
\[
c_- \mathbb I\le g^{(2)}(x),
\]
with $\mathbb I$ the $2\times 2$ identity matrix and $c_->0$ 
and, finally, that
\[
|p|^3 |D g_{i,j}(p)|\stackrel{|p|\to\infty}=O(|p|^2)
\]
with $D
g_{i,j}$ the gradient of
$g_{i,j}$.
Then, it is known (see \cite[Th. 12.16]{Liberman} and the remarks following it) that \eqref{eq:PDE3}
admits a unique global classical solution $\phi$
that belongs to a certain H\"older space $H_a,a>2$
and in particular that $\phi$
is at least $C^2$
in space, $C^1$
in time and its gradient $\nabla\phi$
is continuous in time (in reality, using the $C^\infty$
regularity of the coefficients of the equation one may bootstrap the
argument to get $C^\infty$
regularity for $\phi$,
but we do not need this).

Assume that $\phi_0$ satisfies the same condition as
$\psi_0$ in \eqref{eq:etazero}. A standard comparison argument gives
that
$\phi(\cdot,t)$ still satisfies \eqref{eq:etazero} at all later
times. Namely, write the convex set $\mathcal
A-\bar\rho$ (the translation of $\mathcal A$ by $-\bar\rho$) as the intersection of half-planes:
\begin{eqnarray}
  \label{eq:convA}
  \mathcal A-\bar\rho=\cap_{n\in \mathbb S^1}\{x\in \mathbb R^2: x\cdot n\le f(n)\}
\end{eqnarray}
 and for any $n\in\mathbb S^1,a>0$ define
\[
\hat \phi(u,t):=\phi(u+n a,t)-a f(n),
\]
which is still a smooth solution of \eqref{eq:PDE3}, this time with initial condition $\phi_0(\cdot+n a)-a f(n)$.
Since at time zero $\hat \phi\le \phi$, by the maximum principle the 
same holds at later times and therefore $\nabla\phi(u,t)+\bar\rho\in\mathcal A$.

Now let us go back to our PDE \eqref{eq:PDE} and let us recall that it
is equivalent to \eqref{eq:PDE2}.
Define $g:\mathbb R^2\mapsto \mathbb R$ and
$\kappa:\mathbb R^2\mapsto \mathbb R$ such that $g$ and $\kappa$
coincide with $\sigma$ and $\mu$ respectively on $\mathcal A$, while
at the same time $g$ and $\kappa$ satisfy the smoothness and
convexity/positivity assumptions formulated just after
\eqref{eq:PDE3}.  From the discussion above we know that the solution
of the Cauchy problem \eqref{eq:PDE3} with $\phi_0\equiv \psi_0$ is
smooth and $\nabla\phi(u,t)\in \mathcal A$ for every
$u,t$. Since $g\equiv \sigma$ and $\kappa\equiv \mu$ in
$\mathcal A$, we deduce that $\phi$ also solves
\eqref{eq:PDE2} (and therefore \eqref{eq:PDE}).

\section{Computation of $D_L(t)$ in terms of limit measures}
\label{sec:DLt}

We start the bulk of the work, i.e. the proof of claim (ii) of Theorem \ref{th:main}.
The goal of this section is to prove the upper bound  \eqref{eq:inco} for $\limsup_L D_L(t)$ in terms of certain 
space-time averaged measures $\nu_{t,(i,j)}$.
\subsection{Some preliminary definitions}
We need some notation: given a function $f:\mathcal T_L\to\mathbb R$ we let \[\hat\nabla_i f(u)=f(u)-f(u-\hat e_i), i=1,2\]
and $\hat\nabla f(u)=(\hat\nabla_1 f(u),\hat\nabla_2 f(u))$.
Moreover, given $\eta\in\Omega_{\bar\rho^{(L)}}$ and $u\in\mathcal T_L$, we let
\begin{equation}
  \label{eq:eps}
   \epsilon(u,\eta)={\bf 1}_{\{b_1(u),b_2(u)\}\subset \eta}-{\bf 1}_{\{b_1(u-\hat e_1),b_2(u-\hat e_2)\}\subset\eta}\in\{0,-1,+1\},
\end{equation}
with $b_i(u)$ as in Definition \ref{def:Ai}.
It is immediately seen that the events \[\{b_1(u),b_2(u)\}\subset \eta\] and
\[\{b_1(u-\hat e_1),b_2(u-\hat e_2)\}\subset\eta\] are
 mutually exclusive.

For any vertex $u\in\mathcal T_L$ such that $\epsilon(u,\eta)\ne0$, we define 
\begin{eqnarray}
  \label{eq:ka}
  k(u,\eta)=\max\{n\ge 1:\epsilon(u+(n-1)\epsilon(u,\eta)\hat e_3,\eta)=\epsilon(u,\eta)\}\ge 1.
\end{eqnarray}
Note that, if $\epsilon(u,\eta)=+1$ (resp. if $\epsilon(u,\eta)=-1$) then $k(u,\eta)$ is the smallest integer such that there is a dimer of type $3$ at $b_3(u+(n-1)\hat e_3)$ (resp. at
$b_3(u-n\hat e_3)$). See Fig. \ref{fig:k}.

\begin{figure}[h]
  \includegraphics[width=5cm]{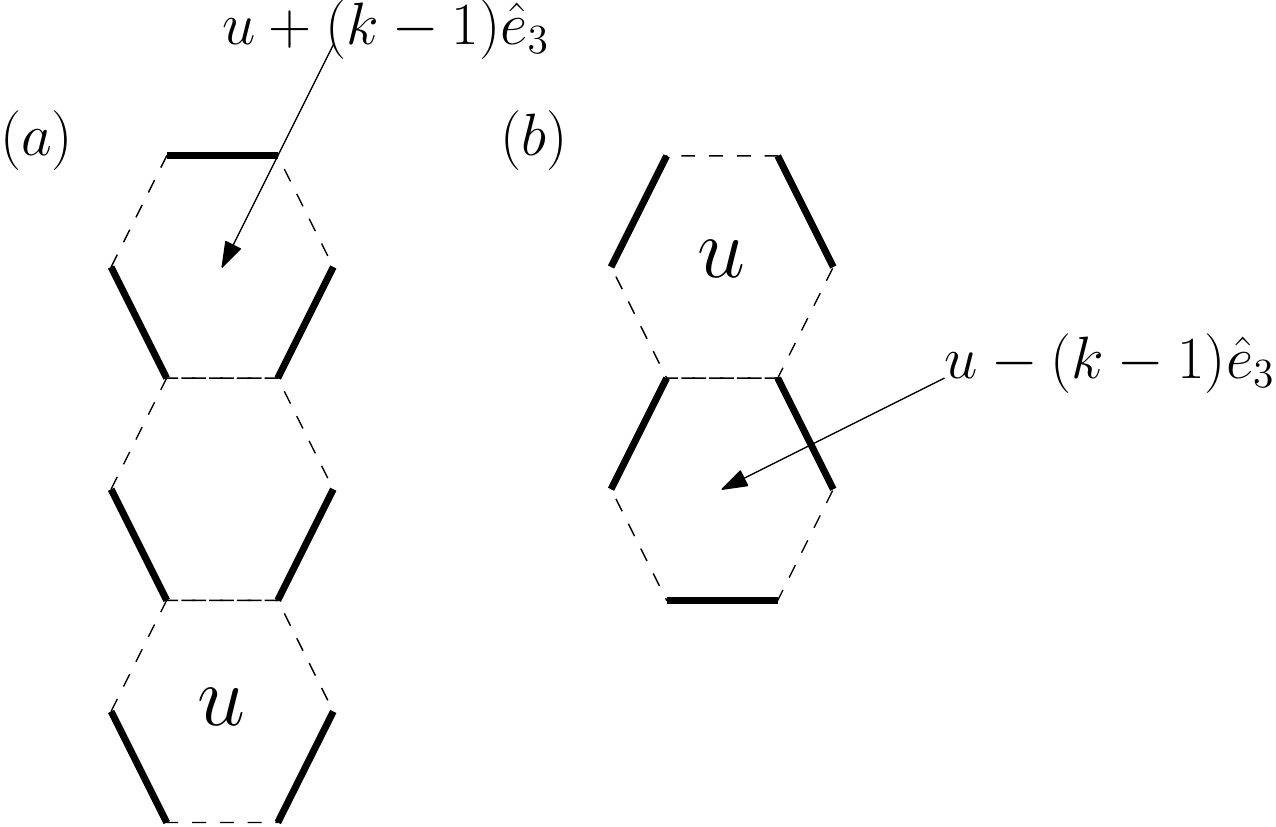}
  \caption{(a): here, $\epsilon(u,\eta)=+1$ and $k=k(u,\eta)=3$ since $k$ is the smallest integer such that 
$\epsilon(u+(k-1)\hat e_3,\eta)=1$ while $\epsilon(u+k\hat e_3,\eta)\ne 1$. (b): in this case, $\epsilon(u,\eta)=-1$ and $k=k(u,\eta)=2$. }
\label{fig:k}
\end{figure}

For later convenience, it is useful to remark the following:
\begin{Lemma} For every configuration $\eta\in \Omega_{\bar\rho^{(L)}}$ and $u\in \mathcal T_L$, one has the identity
\label{lemma:epsilon}
\begin{eqnarray}
  \label{eq:epsilon}
  \epsilon(u,\eta)=\frac12\left[
F(u+\hat e_3,\eta)-F(u,\eta)+\Delta H_\eta(u)
\right]
\end{eqnarray}  
where
\begin{eqnarray}
  \label{eq:eps2}
\Delta H_\eta(u)=\sum_{i=1}^2\left[H_\eta(u+\hat e_i)+H_\eta(u-\hat e_i)\right]-4H_\eta(u)\\=
\hat\nabla_1(\hat\nabla_1 H_\eta)(u+\hat e_1)+\hat\nabla_2(\hat\nabla_2 H_\eta)(u+\hat e_2)
\end{eqnarray}
and
\begin{eqnarray}
  \label{eq:F}
  F(u,\eta)=|{\bf 1}_{b_1(u)\in\eta}-{\bf 1}_{b_2(u)\in\eta}|.
\end{eqnarray}
\end{Lemma}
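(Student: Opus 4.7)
My plan is to express both sides of \eqref{eq:epsilon} in terms of local edge-occupation indicators and reduce the identity to a single matching constraint. I will abbreviate $a_i := \mathbf{1}_{b_i(u)\in\eta}$, $a_i' := \mathbf{1}_{b_i(u-\hat e_i)\in\eta}$, $a_i'' := \mathbf{1}_{b_i(u+\hat e_3)\in\eta}$ for $i=1,2$, and $a_3 := \mathbf{1}_{b_3(u)\in\eta}$. From the gradient rule \eqref{eq:gradh} applied at $u$ and at $u-\hat e_i$, one has $\hat\nabla_i H_\eta(u+\hat e_i) = -\bar\rho_i^{(L)} + a_i$ and $\hat\nabla_i H_\eta(u) = -\bar\rho_i^{(L)} + a_i'$; the density shifts cancel, giving $\Delta H_\eta(u) = (a_1-a_1') + (a_2-a_2')$. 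By definition, $\epsilon(u,\eta) = a_1 a_2 - a_1' a_2'$.

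Next, I will invoke the elementary identity $a_i a_j = \tfrac12(a_i + a_j - |a_i - a_j|)$, valid for $a_i, a_j \in \{0,1\}$. This gives $a_1 a_2 = (a_1 + a_2 - F(u,\eta))/2$ directly from the definition of $F$. The analogous rewriting $a_1' a_2' = (a_1' + a_2' - F(u+\hat e_3,\eta))/2$ requires the key equality $|a_1' - a_2'| = |a_1'' - a_2''|$; once this is available, subtracting $a_1' a_2'$ from $a_1 a_2$ and combining with the formula for $\Delta H_\eta(u)$ produces \eqref{eq:epsilon} immediately.

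The heart of the argument, and the only nontrivial step, is this last equality, which is where the combinatorics of the matching enter. I will identify the two triangular faces of $\mathcal{T}_L$ containing the edge from $u$ to $u+\hat e_3$, namely $\{u,u+\hat e_3,u-\hat e_1\}$ and $\{u,u+\hat e_3,u-\hat e_2\}$; their duals are two vertices $v,v'$ of $\mathcal{H}_L$ on the boundary of the hexagonal face dual to $u$. A careful identification (using $\hat e_1+\hat e_2+\hat e_3=0$ to pick out the third edge at each vertex) shows that the three edges of $\mathcal{H}_L$ incident to $v$ are $b_3(u),\, b_1(u-\hat e_1),\, b_2(u+\hat e_3)$, while those incident to $v'$ are $b_3(u),\, b_2(u-\hat e_2),\, b_1(u+\hat e_3)$. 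The perfect-matching constraint at each of $v,v'$ then yields $a_3+a_1'+a_2''=1$ and $a_3+a_2'+a_1''=1$, whose difference is exactly $a_1'-a_2' = a_1''-a_2''$, proving the key equality. Substituting back produces the identity, so the only real work is the combinatorial identification of edges at $v,v'$; the rest is purely algebraic.
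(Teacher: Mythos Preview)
Your proof is correct and follows essentially the same route as the paper: both reduce the identity, via $\Delta H_\eta(u)=(a_1-a_1')+(a_2-a_2')$ and the rewriting $ab=\tfrac12(a+b-|a-b|)$ for $a,b\in\{0,1\}$, to the single equality $|a_1'-a_2'|=|a_1''-a_2''|$ (equivalently, the vanishing of the second line of the paper's display corresponding to your key step). The only difference is in how that equality is justified: the paper disposes of it by saying it can be checked on the finitely many admissible local configurations of $b_1(u+\hat e_3),b_2(u+\hat e_3),b_1(u-\hat e_1),b_2(u-\hat e_2),b_3(u)$, whereas you give a cleaner conceptual argument by identifying the two hexagonal vertices $v,v'$ dual to the triangles $\{u,u+\hat e_3,u-\hat e_1\}$ and $\{u,u+\hat e_3,u-\hat e_2\}$ and subtracting the matching constraints $a_3+a_1'+a_2''=1$ and $a_3+a_2'+a_1''=1$. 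Your version makes transparent why the case-check works and avoids any enumeration; otherwise the two arguments coincide.
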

Note that, while $\mathcal T$ is the triangular lattice, $\Delta$ in
\eqref{eq:eps2} is the ordinary $\mathbb Z^2$ Laplacian with respect
to the coordinate axes $\hat e_1,\hat e_2$.
\begin{proof}[Proof of Lemma \ref{lemma:epsilon}]
Note first of all the trivial identities
\begin{eqnarray}
\label{eq:Ff}
  |{\bf 1}_{b\in\eta}-{\bf 1}_{b'\in\eta}|={\bf 1}_{b\in\eta}+{\bf 1}_{b'\in\eta}-2
{\bf 1}_{\{b,b'\}\subset\eta}
\end{eqnarray}
and
\begin{eqnarray}
  \label{eq:Ff2}
  \Delta H_\eta(u)=({\bf 1}_{b_1(u)\in\eta}-{\bf 1}_{b_1(u-\hat e_1)\in \eta})+
({\bf 1}_{b_2(u)\in\eta}-{\bf 1}_{b_2(u-\hat e_2)\in\eta}).
\end{eqnarray}
Then,  the r.h.s. of \eqref{eq:epsilon} equals
\begin{multline}
\frac{1}{2}|{\bf 1}_{b_1(u + \hat e_3)\in\eta}-{\bf 1}_{b_2(u+ \hat e_3)\in\eta}| - \frac{1}{2}({\bf 1}_{b_1(u)\in\eta}+{\bf 1}_{b_2(u)\in\eta}-2
{\bf 1}_{\{b_1(u),b_2(u)\}\subset\eta}) \\
+ \frac{1}{2}({\bf 1}_{b_1(u)\in\eta}-{\bf 1}_{b_1(u-\hat e_1)\in \eta})+
\frac{1}{2}({\bf 1}_{b_2(u)\in\eta}-{\bf 1}_{b_2(u-\hat e_2)\in\eta})
\end{multline}
and using \eqref{eq:Ff} again for $b=b_1(u-\hat e_1),b'=b_2(u-\hat e_2)$, we obtain
\begin{multline}
  \label{eq:Ff3}
 {\bf 1}_{\{b_1(u),b_2(u)\}\subset \eta}-{\bf 1}_{\{b_1(u-\hat e_1),b_2(u-\hat e_2)\}\subset \eta}\\+
\frac12\left[
|{\bf 1}_{b_1(u+\hat e_3)\in\eta}-{\bf 1}_{b_2(u+\hat e_3)\in\eta}|-|{\bf 1}_{b_1(u-\hat e_1)\in\eta}-{\bf 1}_{b_2(u-\hat e_2)\in\eta}|
\right].
\end{multline}
The first line of \eqref{eq:Ff3} is exactly $\epsilon(u,\eta)$ and the
second line is identically zero (this is easily checked in
all the finitely many allowed dimer configurations of edges
$b_1(u+\hat e_3), b_2(u+\hat e_3),b_1(u-\hat e_1),b_2(u-\hat
e_2),b_3(u)$).
\end{proof}

\subsection{Computation of $d/dt D_L(t)$}
 By our assumption on the initial condition we know that \eqref{eq:limite}  holds for $t=0$. Then, we differentiate $D_L(t)$ with respect to $t$ and integrate back with the hope that, for every $t>0$,
\begin{eqnarray}
  \label{eq:spero}
  \limsup_{L\to\infty} \int_0^t ds\,\frac d{ds}D_L(s)\le 0.
\end{eqnarray}

We first notice that the time derivative of $D_L(t)$ can be written as the sum of four terms:
\begin{gather}
  \label{eq:3}
  \frac d{dt}D_L(t)=\sum_{i=1}^4B_i(t)\\
\label{eq:B1bis}
B_1(t)=\frac d{dt}\frac1{L^2}\mathbb E\sum_{u\in\mathcal T_L}\frac{ H^2(u,t)}{L^2}
 \\ 
B_2(t)= -\frac2{L^2}\sum_{u\in\mathcal T_L}\psi(u/L,t)\frac d{dt}\mathbb E\frac{ H(u,t )}L
\\
B_3(t)=-\frac2{L^2}\sum_{u\in\mathcal T_L}\mathbb E\frac{ H(u,t )}L\partial_t\psi(u/L,t)
\\
B_4(t)=\frac2{L^2}\sum_{u\in\mathcal T_L}\psi(u/L,t)\partial_t\psi(u/L,t).
\end{gather}
We  put 
$ \Psi(u,t):=L\psi(u/L,t)$, with $u\in\mathcal T_L$ and $\psi$ the solution of \eqref{eq:PDE}.
Then we re-express each $B_i(t)$ using the definition of the transition rates:
\begin{Proposition}
\label{prop:primaderivata}
  \begin{gather}
    B_1(t)=\frac1{L^2}\mathbb E\sum_{u\in\mathcal T_L}\left(
\frac{|\epsilon(u,t)|}2-\bar\rho_3^{(L)}|\epsilon(u,t)|\frac{k(u,t)-1}2+ \epsilon(u,t) H(u,t)
\right) \label{eq:B1}
\\
B_2(t)=-\frac1{L^2}\sum_{u\in\mathcal T_L}\mathbb E \left\{\Psi\left(u,t\right)\epsilon(u,t)+\frac{\epsilon(u,t)}{k(u,t)}\sum_{j=0}^{k(u,t)-1}\left[\Psi\left(u+j\epsilon(u,t)\hat e_3\right)-\Psi\left(u,t\right)\right]\right\}  \label{eq:B2}
\\
B_3(t)=\frac2{L^2}\sum_{u\in\mathcal T_L}W(\hat \nabla\Psi(u,t)+\bar \rho)\cdot\mathbb E\hat\nabla  H(u,t )+o(1)  \label{eq:B3}
\\
B_4(t)=-\frac2{L^2}\sum_{u\in\mathcal T_L}\hat \nabla \Psi(u,t)\cdot W(\hat \nabla\Psi(u,t)+\bar \rho)+o(1) \label{eq:B4}
\end{gather}
  where the error terms $o(1)$ tend to zero as $L\to\infty$, uniformly
  on compact time intervals, and $k(u,t),\epsilon(u,t)$ are short-hand
  notations for $k(u,\eta(t)),$ $\epsilon(u,\eta(t ))$.
\end{Proposition}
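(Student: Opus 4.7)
The overall plan is to apply the infinitesimal generator $\mathcal L$ to the relevant quantities: for $B_1(t)$, to $\sum_u H^2(u,t)$; for $B_2(t)$, to $\sum_u \Psi(u,t)H(u,t)$; while for $B_3(t)$ and $B_4(t)$, which involve $\partial_t\psi$, to use the PDE \eqref{eq:PDE} together with a discrete summation by parts. In each case the central task is to parameterize the sum over admissible jumps by a sum over faces $u$ carrying the local labels $(\epsilon(u,\eta),k(u,\eta))$.

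\textbf{Computation of $B_1$.} For a single jump of length $m$ (rate $L^2/(2m)$) that changes $H$ by $\delta\in\{\pm 1\}$ at $m$ crossed faces $S$, one has
\[
\sum_u\bigl[(H(u)+\Delta H(u))^2-H(u)^2\bigr]=2\delta\sum_{u\in S}H(u)+m.
\]
The admissible jumps come in stretches: a maximal connected run of faces $w_0,\,w_0+\hat e_3,\dots,w_0+(k'-1)\hat e_3$ all sharing the same value $\epsilon\in\{\pm 1\}$ corresponds to one particle, sitting at $b_3$ of the extreme face, that may jump by $m=1,\dots,k'$ steps; the length-$m$ jump flips the $m$ faces nearest the particle, with $\delta=\epsilon$. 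Summing the contribution $\sum_m \frac{L^2}{2m}[m+2\delta\sum_{u\in S_m}H(u)]$ over the stretch gives the constant part $\frac{L^2}{2}k'$ plus $L^2$ times a sum of $H$ values inside the stretch. The decisive point is that \emph{within a stretch} the only type-3 dimer is the particle itself, so by \eqref{eq:gradh2} the height has constant gradient $\bar\rho_3^{(L)}$ in direction $\hat e_3$; equivalently, if $u$ is the face with $k(u,\eta)=k$ and $v$ is the particle's face, then $H(u,t)=H(v,t)+(k-1)\bar\rho_3^{(L)}$. Inserting this identity into the harmonic sum $\sum_{m=1}^{k'}\frac{1}{m}\sum_{j=0}^{m-1}H(v-j\hat e_3)$ collapses the double sum to $k'H(v)+\bar\rho_3^{(L)}k'(k'-1)/4$, and re-expressing $H(v)$ in terms of $H(u)$ inside the stretch produces exactly the per-face quantity $|\epsilon|/2-\bar\rho_3^{(L)}|\epsilon|(k-1)/2+\epsilon H$ of \eqref{eq:B1}.

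\textbf{Computation of $B_2$.} Using $\frac{d}{dt}\mathbb E H(u,t)=\mathbb E\mathcal L H(u,t)$ and $\sum_u\Psi(u)\Delta_\xi H(u)=\delta_\xi\sum_{u\in S_\xi}\Psi(u)$, one needs a bijection between admissible jumps and some natural set of faces. Here the right parameterization is by the extremal face of the crossed region: for every face $u$ with $\epsilon(u,\eta)\ne 0$ there is a unique jump whose crossed region equals $\{u+j\epsilon(u,\eta)\hat e_3:j=0,\dots,k(u,\eta)-1\}$ (this is the length-$k(u,\eta)$ jump of the particle associated with the stretch containing $u$), and this correspondence exhausts all jumps. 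Hence
\[
\sum_u\Psi(u,t)\,\mathcal L H(u,t)=\frac{L}{2}\sum_u\frac{\epsilon(u,t)}{k(u,t)}\sum_{j=0}^{k(u,t)-1}\Psi(u+j\epsilon(u,t)\hat e_3,t),
\]
and writing $\Psi(u+j\epsilon\hat e_3)=\Psi(u)+[\Psi(u+j\epsilon\hat e_3)-\Psi(u)]$ yields \eqref{eq:B2} directly.

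\textbf{Computation of $B_3$ and $B_4$.} Here the key input is the PDE \eqref{eq:PDE}, $\partial_t\psi(u/L,t)=\mathrm{div}\,W(\nabla\psi(u/L,t)+\bar\rho)$. Combined with the $C^2$ regularity of $\psi$ provided by Theorem~\ref{th:main}(i), one obtains the uniform discretization
\[
\partial_t\psi(u/L,t)=L\sum_{i=1,2}\bigl[W_i(\hat\nabla\Psi(u+\hat e_i,t)+\bar\rho)-W_i(\hat\nabla\Psi(u,t)+\bar\rho)\bigr]+o(1),
\]
the $o(1)$ being uniform in $u$ on compact time intervals (smoothness of $W$ on $\mathcal A$, which is preserved by Theorem~\ref{th:main}(i), and $\hat\nabla\Psi\to\nabla\psi$). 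Substituting in the definitions of $B_3$ and $B_4$ and performing discrete summation by parts on the torus (moving $\hat\nabla^+_i$ off $W_i$ and onto $\mathbb EH$, resp.\ $\Psi$) gives \eqref{eq:B3} and \eqref{eq:B4}. The bounds on $\mathbb E H(u,t)/L$ and on $\hat\nabla\Psi$, together with uniform continuity of $W$ on $\mathcal A$, ensure that the $o(1)$ errors survive the $L^{-2}\sum_u$ averaging.

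\textbf{Main obstacle.} The genuinely nontrivial step is the derivation of \eqref{eq:B1}: one must recognize that within a stretch the microscopic $\hat e_3$-gradient of $H$ is deterministically equal to $\bar\rho_3^{(L)}$, which is precisely what makes the harmonic double-sum over jump lengths collapse into a local function of $(\epsilon(u,\eta),k(u,\eta))$ and allows the appearance of the curious $-\bar\rho_3^{(L)}|\epsilon|(k-1)/2$ term. The remaining three items are then essentially bookkeeping, modulo the correct choice of jump parameterization for $B_2$ and the discretization control for $B_3,B_4$.
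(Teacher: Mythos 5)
Your handling of $B_1$ and $B_2$ is correct. For $B_1$ you organize the sum over updates by ``stretches'' (one particle plus a direction, of some length $k'$), sum the contributions over all $k'$ admissible jump lengths using the harmonic collapse $\sum_{m=1}^{k'}\frac{1}{m}\sum_{j=0}^{m-1}H(v-j\hat e_3)=k'H(v)+\bar\rho_3^{(L)}k'(k'-1)/4$, and then re-express per face. The paper's proof is somewhat more direct: it works straight from the bijection you later identify for $B_2$, under which each face $u$ with $\epsilon(u,\eta)\ne0$ carries a unique jump, of length exactly $k(u,\eta)$ and rate $L^2/(2k(u,\eta))$, so no sum over jump lengths is needed. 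Both routes hinge on the same key observation, the constancy of the $\hat e_3$-gradient of $H$ along a stretch, and both give \eqref{eq:B1}. (A typo: your factor $\frac{L}{2}$ in the displayed $B_2$ identity should read $\frac{L^2}{2}$.)

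There is however a genuine gap in your treatment of $B_3$. You invoke ``bounds on $\mathbb E H(u,t)/L$'' without saying where they come from, and no such bound is available a priori. Recall from Definition \ref{def:J} that $H(u,t)=H_{\eta(t)}(u)+J(0,t)$; the first summand is deterministically $O(L)$ by \eqref{eq:gradh2}, but the integrated current $J(0,t)$ at the origin is \emph{not} a bounded function of the process, and it is not obvious that $\mathbb E J(0,t)=O(L)$ uniformly on compact time intervals. After discretizing ${\rm div}\,W(\nabla\psi+\bar\rho)$ and summing by parts, the error in $B_3$ is of the form $o(1)\cdot\frac{1}{L^2}\sum_u\mathbb E\frac{H(u,t)}{L}$, and one must show that this last factor is $O(1)$. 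The paper closes this by proving a conservation law: the same computation that gives \eqref{eq:B2} yields $\frac{d}{dt}\frac{1}{L^2}\sum_u\mathbb E\frac{H(u,t)}{L}=\frac{1}{2L}\sum_u\mathbb E\,\epsilon(u,\eta(t))$, and this vanishes identically because, by Lemma \ref{lemma:epsilon}, $\epsilon(u,\eta)=\frac12\left[F(u+\hat e_3,\eta)-F(u,\eta)+\Delta H_\eta(u)\right]$, whose sum over the torus telescopes to zero. Hence $\frac{1}{L^2}\sum_u\mathbb E\frac{H(u,t)}{L}$ is constant in time, and at $t=0$ it is $O(1)$ because $|H_{\eta_0}(u)|\le CL$. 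This non-trivial step, equal instantaneous rates of upward and downward moves at all times, is exactly what is missing from your sketch.
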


\begin{proof}[Proof of Proposition \ref{prop:primaderivata}]
  We remark that, if a particle $p$ moves downward to an edge $b$ and
  $u\in\mathcal T_L$ is the unique vertex such that $b=b_3(u-\hat e_3)$, then
  necessarily $\epsilon(u,\eta)=+1$.  Similarly, if a particle $p$
  moves upward to an edge $b$ and $u\in\mathcal T_L$ is the unique
  vertex such that $b=b_3(u)$, then necessarily
  $\epsilon(u,\eta)=-1$.
  In both cases the length of such a jump is exactly $k(u,\eta)$, so
  that the jump occurs with rate $L^2/(2k(u,\eta))$, and (see Remark \ref{rem:1}) the height function
  $H$ changes by $\epsilon(u,\eta)$ at faces labelled $u+j\epsilon(u,\eta)\hat e_3$,
  $j=0,\dots,k(\epsilon,\eta)-1$.

One deduces then
  \begin{multline}
    \label{eq:b1}
    B_1(t)=\frac1{L^2}\sum_{u\in\mathcal T_L}\mathbb E \frac{|\epsilon(u,t)|}{2k(u,t)}\sum_{j=0}^{k(u,t)-1}
\Bigl[\left(H(u+j \epsilon(u,t)\hat e_3,t)+\epsilon(u,t)\right)^2\\-H(u+j \epsilon(u,t)\hat e_3,t)^2\Bigr]\\
=\frac1{L^2}\sum_{u\in\mathcal T_L}\mathbb E \frac{|\epsilon(u,t)|}{2k(u,t)}\sum_{j=0}^{k(u,t)-1}\left(
2\epsilon(u,t)H(u+j \epsilon(u,t)\hat e_3,t)+1
\right),
  \end{multline}
  where the factor $|\epsilon(u,t)|$ is there to select those $u$ for
  which $\epsilon(u,t)\ne0$ and a transition is possible.  On the
  other hand,
\begin{eqnarray}
\nonumber
  H(u+j \epsilon(u,t)\hat e_3,t)=H(u,t)-j \bar\rho^{(L)}_3\epsilon(u,t) \;\text{for every } j=0,\ldots, k(u,t)-1
\end{eqnarray}
as follows from \eqref{eq:gradh2} for $i=3$, since no horizontal dimer is crossed by the vertical path from $u$ to $u+\epsilon(u,t)(k(u,t)-1)\hat e_3$.
Therefore, summing over $j$, one finds \eqref{eq:B1} as desired.

Similarly,
\begin{eqnarray}
  \label{eq:b2}
  B_2(t)=-\frac2{L^2}\sum_{u\in\mathcal T_L}\mathbb E
 \frac{|\epsilon(u,t)|}{2k(u,t)}\sum_{j=0}^{k(u,t)-1}\epsilon(u,t)\Psi(u+j \epsilon(u,t)\hat e_3,t)
\end{eqnarray}
which is the same as \eqref{eq:B2}.

As for \eqref{eq:B3}, just note that
\begin{eqnarray}
  \label{eq:b3}
  B_3(t)=-\frac2{L^2}\sum_{u\in\mathcal T_L}\mathbb E \frac{H(u,t)}L\left.{\rm div}(W(\nabla\psi+\bar\rho))\right|_{(u/L,t)}.
\end{eqnarray}
Since $\psi$ is $C^2$
in space and 
using also $\nabla\psi(\cdot,t)\in\mathcal A\subset \mathbb T$ and smoothness of $W$ in $\mathbb T$, we have
\begin{eqnarray}
\label{eq:b31}
 \left. {\rm div} (W(\nabla \psi+\bar\rho))\right|_{(u/L,t)}=o(1)+L\sum_{j=1}^2 \hat\nabla_j W_j(\hat\nabla\Psi+\bar\rho){(u,t)}
\end{eqnarray}
where the error term is uniform over all finite time intervals. Note
that the second term in the r.h.s. of \eqref{eq:b31} is $O(1)$ and not
$O(L)$, because the discrete gradient $\hat\nabla_j$ is essentially $1/L$ times the
continuous gradient.  A discrete summation by parts then gives
\begin{eqnarray}
  \label{eq:b32}
    B_3(t)=\frac2{L^2}\sum_{u\in\mathcal T_L}\mathbb E \hat\nabla H(u,t)\cdot W(\hat\nabla \Psi(u,t)+\bar\rho)\\
+o(1)\times \frac1{L^2}\sum_{u\in\mathcal T_L} \mathbb E\frac{H(u,t)}L.
\end{eqnarray}
In order to get \eqref{eq:B3} it suffices to prove that the sum
multiplying $o(1)$ is of order $1$ with respect to $L$, uniformly on
finite time intervals.  In fact, one has even more: the sum in
question does not depend on time at all (and at time zero it is $O(1)$
because by definition  the height function is bounded by $L$).  Indeed, the same steps that
led to \eqref{eq:B2} give
\begin{eqnarray}
\label{eq:0}
\frac{d}{dt}  \frac1{L^2}\sum_{u\in\mathcal T_L} \mathbb E\frac{H(u,t)}L=\frac1{2L}\sum_{u\in\mathcal T_L}\mathbb E \epsilon(u,t)
\end{eqnarray}
(one just needs to replace $-2\psi(\cdot,\cdot)$ by the constant $1$ in
\eqref{eq:B2}).  Using Lemma \ref{lemma:epsilon} and summation by
parts, the r.h.s. of \eqref{eq:0} is immediately seen to be zero.

Finally, the proof of \eqref{eq:B4} is simpler than that of \eqref{eq:B3} (it uses only smoothness of the solution of the PDE and summation by parts, while the Markov process does not appear) and is left to the reader.
\end{proof}
Note that all terms in the sums defining $B_1(t),\dots,B_4(t)$ are of
order $1$ (we will see later that $k(u,t)$ is typically $O(1)$, see
Proposition \ref{prop:contrk}), except for $\epsilon(u,t)H(u,t)$ and
$\epsilon(u,t)\Psi(u,t)$, that a priori are of order $L$. However, the
spatial sum of these terms can be rewritten in a more convenient form,
from which it is clear that  it is of the good order of magnitude:
\begin{Proposition}
\label{prop:id}
One has the following identities:
  \begin{multline}\label{eq:id1}
    \sum_{u\in\mathcal T_L}  \epsilon(u,t) H(u,t) \\= 
    \frac{\bar\rho^{(L)}_3}2 \sum_{u\in\mathcal T_L}F(u,\eta(t))
-\frac1{2}\sum_{u\in\mathcal T_L} \left(
(\hat\nabla_1  H(u,t))^2+(\hat\nabla_2  H(u,t))^2
\right)
  \end{multline}
and
\begin{multline}
\label{eq:id2}
    \sum_{u\in\mathcal T_L}  \epsilon(u,t)\Psi(u,t) =
    \frac{1}2 \sum_{u\in\mathcal T_L}F(u,\eta(t))(\hat\nabla_1\Psi(u,t)+\hat\nabla_2\Psi(u,t))\\
-\frac1{2}\sum_{u\in\mathcal T_L} \left(
\hat\nabla_1 H(u,t)\hat \nabla_1\Psi(u,t)+\hat\nabla_2 H(u,t)\hat \nabla_2\Psi(u,t)
\right)+o(L^2)
\end{multline}
where $F(u,\eta)$ was defined in \eqref{eq:F} and $o(L^2)$ is uniform on bounded time intervals.
\end{Proposition}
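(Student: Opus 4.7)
The plan is to apply Lemma \ref{lemma:epsilon} to rewrite $\epsilon(u,t)$ in terms of $F$ and $\Delta H$, then perform discrete summations by parts on the torus $\mathcal T_L$. This splits each identity into a ``current'' piece coming from $F(u+\hat e_3)-F(u)$ and a ``Dirichlet energy'' piece coming from $\Delta H(u,t)$ tested against $H(u,t)$ or $\Psi(u,t)$.

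For \eqref{eq:id1}, I would first write
\[
\sum_{u}\epsilon(u,t)H(u,t)=\tfrac12\sum_{u}\bigl[F(u+\hat e_3,\eta(t))-F(u,\eta(t))\bigr]H(u,t)+\tfrac12\sum_{u}\Delta H(u,t)\,H(u,t).
\]
The second sum equals $-\tfrac12\sum_{u}[(\hat\nabla_1H(u,t))^2+(\hat\nabla_2H(u,t))^2]$ by a standard summation by parts on the torus (using $\sum_u f(u)\hat\nabla_i g(u+\hat e_i)=-\sum_u \hat\nabla_i f(u)\,g(u)$), which produces the Dirichlet term in \eqref{eq:id1}. In the first sum, shifting $u\to u-\hat e_3$ in the $F(u+\hat e_3)$ piece gives $\sum_{u}F(u,\eta(t))[H(u-\hat e_3,t)-H(u,t)]$, and the gradient relation \eqref{eq:gradh2} for $i=3$ evaluates the bracket as $\bar\rho_3^{(L)}-\mathbf{1}_{b_3(u-\hat e_3)\in\eta(t)}$.

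The crucial (and essentially only non-bookkeeping) step is then to show $\sum_{u}F(u,\eta)\mathbf{1}_{b_3(u-\hat e_3)\in\eta}=0$ identically. Arranging the six edges of the hexagonal face $u$ in cyclic order, one finds that $b_1(u)$, $b_3(u-\hat e_3)$, $b_2(u)$ are three consecutive edges, so $b_3(u-\hat e_3)$ shares one endpoint with $b_1(u)$ and its other endpoint with $b_2(u)$. Consequently, if $b_3(u-\hat e_3)\in\eta$ then both of its endpoints are already saturated, and neither $b_1(u)$ nor $b_2(u)$ can belong to $\eta$; by the definition \eqref{eq:F} this forces $F(u,\eta)=0$. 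Combining the steps yields \eqref{eq:id1}.

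For \eqref{eq:id2} the same blueprint applies with $\Psi$ in place of $H$. The Laplacian part, by the same summation by parts, gives $-\tfrac12\sum_{u}[\hat\nabla_1H\,\hat\nabla_1\Psi+\hat\nabla_2H\,\hat\nabla_2\Psi]$. The current part becomes $\tfrac12\sum_{u}F(u,\eta)[\Psi(u-\hat e_3,t)-\Psi(u,t)]$, which I would simplify using the $C^{2,1}$ regularity of $\psi$ established in Section \ref{sec:PDE} together with $\Psi(u,t)=L\psi(u/L,t)$ and the identity $u-\hat e_3=u+\hat e_1+\hat e_2$. A Taylor expansion of $\psi$ around $u/L$ yields
\[
\Psi(u-\hat e_3,t)-\Psi(u,t)=\hat\nabla_1\Psi(u,t)+\hat\nabla_2\Psi(u,t)+O(1/L),
\]
uniformly in $u\in\mathcal T_L$ on compact time intervals. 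Since $F\in\{0,1\}$ and there are $L^2$ lattice sites, the $O(1/L)$ error contributes at most $O(L)=o(L^2)$, which is absorbed into the stated error term in \eqref{eq:id2}.
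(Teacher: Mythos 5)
Your proposal is correct and follows essentially the same route as the paper's proof: apply Lemma \ref{lemma:epsilon}, sum by parts to convert the discrete Laplacian term into the Dirichlet energy, shift the $F$-difference and use the gradient relation \eqref{eq:gradh2}, and observe that the cross term $F(u,\eta)\mathbf 1_{b_3(u-\hat e_3)\in\eta}$ vanishes identically (you give a slightly more detailed geometric justification for this incompatibility, but it is the same observation). The treatment of \eqref{eq:id2} via Taylor expansion and the $o(L^2)$ bookkeeping also matches the paper.
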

\begin{proof}[Proof of Proposition \ref{prop:id}]
 Let us show the first identity \eqref{eq:id1}.
We have, from Lemma \ref{lemma:epsilon} and a summation by parts,
\begin{multline}
  \sum_{u\in\mathcal T_L}\epsilon(u,t)H(u,t)\\=\frac12\sum_{u\in\mathcal T_L}\left[F(u+\hat e_3,\eta(t))-F(u,\eta(t))+\sum_{j=1,2}\hat\nabla_j\hat\nabla_j H(u+\hat e_j,t)\right]H(u,t)\\=-\frac12\sum_{u\in\mathcal T_L}\left[
F(u,\eta(t))(H(u,t)-H(u-\hat e_3,t))+\hat\nabla H(u,t)\cdot \hat\nabla H(u,t)
\right].
\end{multline}
Next, recall \eqref{eq:Ff} and (cf. \eqref{eq:gradh2})
\begin{eqnarray}
\label{eq:ac}
  H(u,t)-H(u-\hat e_3,t)={\bf 1}_{b_3(u-\hat e_3)\in\eta(t)}-\bar\rho^{(L)}_3.
\end{eqnarray}
Since the event $\{b_3(u-\hat e_3)\in \eta\}$ is incompatible with both
$\{b_1(u)\in\eta\}$ and $\{b_2(u)\in\eta\}$ (the two events appearing in the definition of $F(u,\eta)$), the indicator function
${\bf 1}_{b_3(u-\hat e_3)\in\eta(t)}$ in \eqref{eq:ac} can be omitted
and \eqref{eq:id1} follows.

The proof of \eqref{eq:id2} is analogous. The error term $o(L^2)$ comes from approximating 
\[
\Psi(u-\hat e_3,t)-\Psi(u,t)=\hat\nabla_1\Psi(u,t)+\hat\nabla_2\Psi(u,t)+o(1).
\]
\end{proof}

Altogether, given that 
\begin{eqnarray}
  \label{eq:gradi}
  \hat\nabla_iH(u,t)={\bf 1}_{b_i(u-\hat e_i)\in\eta(t)}-\bar\rho^{(L)}_i, \;i=1,2
\end{eqnarray}
 we have obtained:
\begin{Proposition}
\label{prop:Ci}
  \begin{equation}
    \label{eq:Bi}
    B_i(t)=\frac1{L^2}\mathbb E\sum_{u\in\mathcal T_L} C_i(\eta_{-u}(t),\Psi_{-u}(t))+o(1)
  \end{equation}
where $\eta_{-u}$ and $\Psi_{-u}$ denote the configurations $\eta$ and $\Psi$ translated by $-u$, $o(1)$ is uniform on bounded time intervals  and 
we set
\begin{align}
\label{eq:C1}
 & C_1(\eta,\Psi(t)):=C_1(\eta)  =\frac{|\epsilon(0,\eta)|}2-\bar\rho^{(L)}_3|\epsilon(0,\eta)|\frac{k(0,\eta)-1}2
+\frac{\bar\rho_3}2 F(0,\eta)
\\\nonumber&-\frac1{2}\left(
{\bf 1}_{b_1(0)\in\eta}(1-2\bar\rho_1)+\bar\rho_1^2+{\bf 1}_{b_2(0)\in\eta}(1-2\bar\rho_2)+\bar\rho_2^2
\right)
\\
 & C_2(\eta, \Psi(t)) =-\frac{\epsilon(0,\eta)}{k(0,\eta)}\sum_{j=0}^{k(0,\eta)-1}\left[ \Psi\left(j\epsilon(0,\eta)\hat e_3,t\right)- \Psi\left(0,t\right)\right] 
  \\
& \quad  -\frac{1}2F(0,\eta)(\hat\nabla_1\Psi(0,t)+\hat\nabla_2\Psi(0,t)) \nonumber
  \\
&  \quad +\frac1{2} \left(({\bf 1}_{b_1(0)\in\eta}-\bar\rho_1)\hat \nabla_1\Psi(0,t)+({\bf 1}_{b_2(0)\in\eta}-\bar\rho_2)\hat \nabla_2\Psi(0,t)
\right) \nonumber
\\
& C_3(\eta, \Psi(t)) =2W(\hat \nabla \Psi(0,t)+\bar \rho)\cdot({\bf 1}_{b_1(0)\in\eta}-\bar\rho_1,{\bf 1}_{b_2(0)\in\eta}-\bar\rho_2)
  \\
&  C_4(\eta, \Psi(t)) =-2\hat \nabla  \Psi(0,t)\cdot W(\hat \nabla \Psi(0,t)+\bar \rho).
\end{align}
\end{Proposition}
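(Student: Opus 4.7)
The proposition is essentially a bookkeeping statement: it rewrites the four terms $B_i(t)$ produced by Propositions \ref{prop:primaderivata} and \ref{prop:id} as a spatial sum of translates of a local function. The plan is therefore to combine the two previous propositions term by term and to absorb into the $o(1)$ error (a) the difference between $\bar\rho^{(L)}$ and $\bar\rho$, (b) the discrete shifts by $\pm\hat e_i$ in the arguments of $\Psi$, and (c) the $o(L^2)$ remainder already present in \eqref{eq:id2}.

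For $B_1(t)$, I substitute \eqref{eq:id1} into \eqref{eq:B1}, which produces four types of terms: the first two are $|\epsilon(u,t)|/2$ and $-\bar\rho_3^{(L)}|\epsilon(u,t)|(k(u,t)-1)/2$, already manifestly translation-covariant since $\epsilon(u,\eta)=\epsilon(0,\eta_{-u})$ and $k(u,\eta)=k(0,\eta_{-u})$; the third is $(\bar\rho_3^{(L)}/2)F(u,\eta(t))$, which equals $(\bar\rho_3^{(L)}/2)F(0,\eta_{-u})$; and the last is $-\tfrac12[(\hat\nabla_1 H)^2+(\hat\nabla_2 H)^2]$, which by \eqref{eq:gradi} equals $-\tfrac12\sum_i({\bf 1}_{b_i(u-\hat e_i)\in\eta}-\bar\rho_i^{(L)})^2$. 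Expanding the square and using that ${\bf 1}^2={\bf 1}$ yields the two lines appearing in $C_1$ after the harmless change of summation variable $v=u-\hat e_i$ and the replacement $\bar\rho^{(L)}\to\bar\rho$, whose error is $o(1)$ because $\bar\rho^{(L)}\to\bar\rho$.

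For $B_2(t)$ I do the same: I plug \eqref{eq:id2} into \eqref{eq:B2}. The term with $\Psi(u,t)\epsilon(u,t)$ generates $-\tfrac12 F(u,\eta(t))(\hat\nabla_1\Psi+\hat\nabla_2\Psi)(u,t)$ and $+\tfrac12\sum_i\hat\nabla_i H(u,t)\hat\nabla_i\Psi(u,t)$ up to an $o(L^2)$ error. The second sum is rewritten, by the change of variable $v=u-\hat e_i$ together with the estimate $\hat\nabla_i\Psi(v+\hat e_i,t)=\hat\nabla_i\Psi(v,t)+O(1/L)$ (which is legitimate since $\psi\in C^2$ and $\Psi=L\psi(\cdot/L,t)$), as $+\tfrac12\sum_v({\bf 1}_{b_i(v)\in\eta(t)}-\bar\rho_i^{(L)})\hat\nabla_i\Psi(v,t)+o(L^2)$, which together with the first sum precisely reproduces $C_2$ after translation and the replacement $\bar\rho^{(L)}\to\bar\rho$. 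For $B_3(t)$ I start from \eqref{eq:B3} and apply the same shift-by-$\hat e_i$ trick: since $W$ is smooth on $\mathcal A\subset\mathbb T$ and $\hat\nabla\Psi+\bar\rho$ stays in $\mathcal A$, the substitution $v=u-\hat e_i$ introduces only an $o(1)$ error, leaving the expression for $C_3$. The term $B_4(t)$ already has the form $\frac1{L^2}\sum_u C_4(\Psi_{-u})+o(1)$ with no dependence on $\eta$, so nothing is to do.

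The only genuine difficulty is controlling the various $o(1)$ errors uniformly on bounded time intervals. For these to be legitimate I need: uniformity in $t$ of the bound $\nabla\psi(u,t)+\bar\rho\in\mathcal A$ (given by item (i) of Theorem \ref{th:main}), so that $W$ and its derivatives are bounded along the trajectory; uniform bounds on $\|D^2\psi(\cdot,t)\|_\infty$ on compact time intervals (which again follows from the $C^{2,1}$ regularity in item (i)); and the fact that $\mathbb E[|\epsilon(u,t)|]$, $\mathbb E[|{\bf 1}_{b_i(u)\in\eta(t)}-\bar\rho_i^{(L)}|]$ and the corresponding moments are bounded by $1$. Once these three uniformities are in place, each of the shift-by-$\hat e_i$ and $\bar\rho^{(L)}\to\bar\rho$ errors accumulates to at most $o(L^2)$ in the un-normalized sum, which after division by $L^2$ is the $o(1)$ error in \eqref{eq:Bi}, uniformly on any compact interval $[0,T]$.
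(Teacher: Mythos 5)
Your proposal is correct and follows essentially the same route the paper takes (the paper gives no separate proof beyond the remark "Altogether, given that \eqref{eq:gradi} we have obtained" and then lists exactly the same sources of $o(1)$ error: those from Propositions \ref{prop:primaderivata} and \ref{prop:id}, the replacement $\bar\rho^{(L)}_i=\bar\rho_i+o(1)$, and the shift $\hat\nabla_j\Psi(u,t)=\hat\nabla_j\Psi(u-\hat e_j,t)+o(1)$). You correctly avoid applying $\bar\rho^{(L)}_3\to\bar\rho_3$ to the unbounded term $|\epsilon|\,(k-1)/2$, which is precisely why the paper keeps $\bar\rho^{(L)}_3$ there in \eqref{eq:C1} while using $\bar\rho$ elsewhere.
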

The error terms $o(1)$ come from those in Propositions
\ref{prop:primaderivata} and \ref{prop:id}, and also from harmless approximations of the
type
\begin{eqnarray}
\bar\rho^{(L)}_i=\bar\rho_i+o(1),\quad \hat\nabla_j \Psi(u,t)=\hat\nabla_j \Psi(u-\hat e_j,t)+o(1).
\end{eqnarray}

Note that $C_i(\eta, \Psi(t))$ depends only on the \emph{gradients} of
$ \Psi(t)$ (similarly, $C_i$ depends on $\eta$, i.e. on the
\emph{gradients} of the height function associated to
$\eta$). Actually, $C_1$ is independent of $\Psi$ and $C_4$ is
independent of $\eta$, while $C_2,C_3$ depend non-trivially on both variables.  
Also observe that only the $C_1$ term depends explicitly on $L$, through the term $\bar{\rho}^{(L)}_3 |\epsilon(0, \eta)|\frac{k(0, \eta) -1}{2}$.

Recall that we want to prove \eqref{eq:spero}. Integrating $d/ds D_L(s)$ from time $0$ to time $t$, we find from Proposition \ref{prop:Ci}
\begin{multline}
\label{eq:where}
  D_L(t)=D_L(0)+
 \frac t{L^2}\sum_{u\in \mathcal T_L}\frac1t\int_0^t \mathbb E\sum_{i=1}^4
C_i(\eta_{-u}(s), \Psi_{-u}(s)) \, \mathrm{d}s+o(1).
\end{multline}
This expression involves  a triple average on space,
time and on the realization of the process.  
Recall also that, while we do not indicate this explicitly, both the law
$\mathbb E$ and the function  $\Psi(\cdot,s)$ depend on
$L$. 

\medskip

Let $\Omega$ be the set of all dimer
coverings of $\mathcal H$.  We put the  product topology on
$\Omega$, which makes it compact: as a metric we can take for instance
\[d(\eta,\eta')=\sum_{e\in \mathcal H}2^{-|e|}|{\bf 1}_{e\in
  \eta}-{\bf 1}_{e\in \eta'}|
\]
where the sum over $e$ runs over all edges of $\mathcal H$ and $|e|$
is the distance between $e$ and the origin. 
Given this, we observe that
\[
 \sum_{i=1}^4 C_i(\eta,\Psi(s))=C(\eta,\Psi(s))+U^{(L)}(\eta,\Psi(s))\]
where $C(\eta,\Psi(s))$
 is a 
  bounded and continuous function on $\Omega$
 while $U^{(L)}(\eta,\Psi(s))$ is  the unbounded term
\begin{gather}
\label{eq:Fatou}
U^{(L)}(\eta,\Psi(s))\\\nonumber=  -\bar\rho^{(L)}_3|\epsilon(0,\eta)|\frac{k(0,\eta)-1}2-\frac{\epsilon(0,\eta)}{k(0,\eta)}\sum_{j=0}^{k(0,\eta)-1}\left[ \Psi\left(j\epsilon(0,\eta)\hat e_3,s\right)- \Psi\left(0,s\right)\right].
\end{gather}
Note in particular that, as indicated by the notation, there is no dependence on $L$ in the function $C$. 
The function $U^{(L)}(\eta,\Psi(s))$ instead depends explicitly on $L$ through $\bar\rho^{(L)}_3$.
The goal of the rest of this section is to replace $U^{(L)}$ with a bounded, continuous and $L$-independent  function $U_{M}$, where
$M$ is a cut-off parameter that will be sent to infinity at the very end.

First of all we claim:
\begin{Lemma}
  \label{Uneg}
One has $U^{(L)}(\eta,\Psi(s))\le0$ as soon as $L$ is larger than some $L_0$ independent
of $\eta,s$.
\end{Lemma}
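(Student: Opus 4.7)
The plan is to dispose of the trivial case $\epsilon(0,\eta)=0$ (where $U^{(L)}=0$ by the convention that the terms involving $k$ are read as zero) and, for $\epsilon:=\epsilon(0,\eta)\in\{\pm1\}$ and $k:=k(0,\eta)\ge 1$, to estimate the discrete sum appearing in $U^{(L)}$ by applying the fundamental theorem of calculus to the $C^2$ function $\psi$ along the direction $\hat e_3=-\hat e_1-\hat e_2$. Writing $\Psi(v,s)=L\psi(v/L,s)$ and interpreting $\psi$ via its (still $C^2$) periodic extension to $\mathbb R^2$, one obtains for every $j\ge 0$ the exact identity
\begin{equation*}
-\epsilon\bigl[\Psi(j\epsilon\hat e_3,s)-\Psi(0,s)\bigr]=\int_0^j(\partial_1\psi+\partial_2\psi)(\lambda\epsilon\hat e_3/L,s)\,\mathrm{d}\lambda,
\end{equation*}
which is valid for both signs of $\epsilon$.

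The key input is the a priori slope bound from Theorem \ref{th:main}(i): $\nabla\psi(u,s)+\bar\rho\in\mathcal A\Subset\mathbb T$ for all $u,s$. Since $\mathcal A$ is a compact subset of the open triangle $\mathbb T$, it is separated from the edge $\{\rho_1+\rho_2=1\}$ of $\partial\mathbb T$, so there exists $\delta>0$ depending only on $\mathcal A$ such that
\begin{equation*}
\partial_1\psi(u,s)+\partial_2\psi(u,s)\le \bar\rho_3-\delta \quad \text{uniformly in }u,s.
\end{equation*}
Plugging this bound into the integral identity gives $-\epsilon[\Psi(j\epsilon\hat e_3,s)-\Psi(0,s)]\le j(\bar\rho_3-\delta)$; summing over $j=0,\dots,k-1$ and dividing by $k$ yields
\begin{equation*}
-\frac{\epsilon}{k}\sum_{j=0}^{k-1}\bigl[\Psi(j\epsilon\hat e_3,s)-\Psi(0,s)\bigr]\le(\bar\rho_3-\delta)\,\frac{k-1}{2}.
\end{equation*}

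Combining with the first term of $U^{(L)}$ one obtains
\begin{equation*}
U^{(L)}(\eta,\Psi(s))\le \bigl(\bar\rho_3-\bar\rho_3^{(L)}-\delta\bigr)\frac{k-1}{2},
\end{equation*}
which is non-positive as soon as $|\bar\rho_3^{(L)}-\bar\rho_3|\le \delta/2$, i.e.\ for all $L\ge L_0$ with $L_0$ depending only on $\mathcal A$ and the rate of convergence $\bar\rho^{(L)}\to\bar\rho$, and the bound is uniform in $\eta$ and $s$ as required. The argument is short and there is no real obstacle: the one conceptual point is recognising that the non-positivity of $U^{(L)}$ is not a cancellation but an inequality forced by the fact that the vertical distance $k$ that a particle can jump is controlled by the local particle density, which in turn dominates the slope of $\psi$ in direction $\hat e_3$; the compactness of $\mathcal A$ inside $\mathbb T$ converts this rough balance into a strict uniform inequality.
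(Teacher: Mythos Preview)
Your proof is correct and follows essentially the same route as the paper's: both dispose of the case $\epsilon=0$, then use the slope constraint $\nabla\psi+\bar\rho\in\mathcal A\Subset\mathbb T$ to bound the increment of $\Psi$ along $\hat e_3$ by $j(\bar\rho_3-\delta)$, sum over $j$, and conclude from $\bar\rho_3^{(L)}\to\bar\rho_3$. The only cosmetic difference is that you obtain the increment bound via the fundamental theorem of calculus applied to the smooth function $\psi$, whereas the paper phrases it in terms of the discrete gradients $\hat\nabla_i\Psi$ (which satisfy the same bound since $\Psi=L\psi(\cdot/L)$); the arguments are equivalent.
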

\begin{proof}[Proof of Lemma \ref{Uneg}]
  We can assume that $\epsilon(0,\eta)=\pm1$: indeed, if
$\epsilon(0,\eta)=0$ then $U^{(L)}(\eta,\Psi(s))=0$ and there is nothing to
prove. 
 Since $\nabla\psi(u,t)+\bar \rho\in\mathcal A$ for every $u\in[0,1]^2,t\ge0$, we know that $\Psi(t)$ satisfies
\begin{gather}
\label{eq:Fdelta}
\left\{
  \begin{array}{ll}
\delta\le  \hat\nabla_i \Psi(u,t)+\bar\rho_i\le 1-\delta, & i=1,2,\\\delta\le  \hat\nabla_1 \Psi(u,t)+\hat\nabla_2 \Psi(u,t)+\bar\rho_1+\bar\rho_2\le 1-\delta   &,
  \end{array}
\right.
\end{gather}
for some $\delta>0$ independent of $t,u$.  
Therefore,
\begin{gather}
  -\epsilon(0,\eta)\left[ \Psi\left(j\epsilon(0,\eta)\hat e_3,s\right)- \Psi\left(0,s\right)\right]\le j(1-\delta-\bar\rho_1-\bar\rho_2)\\
=|\epsilon(0,\eta)|j(\bar\rho_3-\delta).
\end{gather}
Summing over $j$, the r.h.s. of \eqref{eq:Fatou} is upper bounded by 
\begin{eqnarray}
-|\epsilon(0,\eta)|\frac{k(0,\eta)-1}2(\bar\rho^{(L)}_3-\bar\rho_3+\delta)
\end{eqnarray}
which is negative for $L$ large enough given that $\bar\rho^{(L)}_3\to\bar\rho_3$.  
\end{proof}
As a consequence, for every $M>0$ and assuming $L>L_0$ we have
\begin{eqnarray}
  \label{eq:continuo}
U^{(L)}(\eta,\Psi(s))\le U^{(L)}(\eta,\Psi(s)){\bf 1}_{|\epsilon(0,\eta)|k(0,\eta)\le M} =U_{M}(\eta,\Psi(s))+\delta_L(M)  
\end{eqnarray}
where 
\begin{eqnarray}
  \label{eq:uinf}
  U_{M}(\eta,\Psi(s))=U(\eta,\Psi(s)){\bf 1}_{|\epsilon(0,\eta)|k(0,\eta)\le M},
\end{eqnarray}
with $U(\eta,\Psi(s))$ defined as $U^{(L)}(\eta,\Psi(s))$ except that $\bar\rho^{(L)}$ is replaced by $\bar \rho$,
while
\[
  \delta_L(M)=(\bar\rho_3-\bar\rho^{(L)}_2)|\epsilon(0,\eta)|\frac{k(0,\eta)-1}2{\bf 1}_{|\epsilon(0,\eta)|k(0,\eta)\le M}
\]
so that $|\delta_L(M)|\le |\bar\rho_3-\bar\rho^{(L)}_2|M$ which tends
to zero as $L\to\infty $ (not uniformly in $M$). Note that $U_{M}$ is
continuous in $\eta$. Indeed, the mapping
\[\eta\mapsto M\wedge |\epsilon(0,\eta)|k(0,\eta)\] is continuous (its
value is determined by the configuration of $\eta$ in a window of size
$M$ around the origin).

Altogether, we have obtained:
\begin{Proposition} As soon as $L\ge L_0$, we have for every $M>0$
  \begin{multline}
\label{eq:lbella}
    D_L(t)\le D_L(0)+\delta_L(M) \\
+ \frac t{L^2}\sum_{u\in \mathcal T_L}\frac1t\int_0^t \mathbb E \left[C(\eta_{-u}(s),
 \Psi_{-u}(s))+U_M(\eta_{-u}(s),
 \Psi_{-u}(s))\right] \, \mathrm{d}s.
  \end{multline}
The functions $C(\cdot,\Psi(t))$ and $U_M(\cdot,\Psi(t))$ are continuous and bounded on $\Omega$, and the error term $\delta_L(M)$ tends to zero 
as $L\to\infty$.
\end{Proposition}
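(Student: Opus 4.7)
The plan is essentially one of assembly: the proposition consolidates the identity for $D_L(t)-D_L(0)$ coming out of Proposition \ref{prop:Ci} with the decomposition $\sum_i C_i = C + U^{(L)}$ and the bound on $U^{(L)}$ supplied by Lemma \ref{Uneg} and inequality \eqref{eq:continuo}. Concretely, I would first integrate \eqref{eq:Bi} over $s\in[0,t]$, which yields \eqref{eq:where} expressing $D_L(t)-D_L(0)$ as a space-time-and-noise average of $\sum_{i=1}^4 C_i(\eta_{-u}(s),\Psi_{-u}(s))$ modulo an error that is $o(1)$ uniformly on $[0,t]$. Splitting the integrand as $C+U^{(L)}$ isolates the only piece that is not manifestly bounded (the terms involving $k(0,\eta)$ and the telescoping sum of $\Psi$-values along $\hat e_3$).

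The second step is to insert the cutoff $\mathbf{1}_{|\epsilon(0,\eta)|k(0,\eta)\le M}$ inside the $U^{(L)}$ term. For $L\ge L_0$ this is legitimate because Lemma \ref{Uneg} gives $U^{(L)}\le 0$ pointwise, so discarding the contribution on $\{|\epsilon|k>M\}$ can only increase the integrand. The inequality \eqref{eq:continuo} then bounds the truncated expression by $U_M+\delta_L(M)$, and substituting this back into \eqref{eq:where} produces precisely \eqref{eq:lbella}.

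The bookkeeping to verify is the following: first, $C=\sum_i C_i-U^{(L)}$ is a bounded continuous function on $\Omega$ (for each fixed $\Psi(s)$) because each of its terms involves only indicators of a finite collection of edges near the origin, bounded integer-valued quantities such as $\epsilon(0,\eta)\in\{-1,0,1\}$ and $F(0,\eta)\in\{0,1\}$, and smooth functions of $\hat\nabla\Psi$ that are uniformly bounded by \eqref{eq:Fdelta}; the $L$-dependence hidden in $\bar\rho^{(L)}$ has already been absorbed into the $o(1)$ in Proposition \ref{prop:Ci}. Second, $U_M$ is bounded by a constant depending only on $M$, $\bar\rho$ and $\|\nabla\psi\|_\infty$, because on the cutoff event all the potentially unbounded factors are $O(M)$. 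Third, $U_M$ is continuous on $\Omega$: the map $\eta\mapsto |\epsilon(0,\eta)|k(0,\eta)\wedge M$ is integer-valued and determined by the configuration of $\eta$ inside a window of radius $\le M$ around the origin, hence locally constant in the product topology.

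There is no genuine obstacle here; the only point worth flagging is the non-uniformity of $\delta_L(M)$ in $M$, visible from the explicit estimate $|\delta_L(M)|\le|\bar\rho_3-\bar\rho_3^{(L)}|M$. This is what dictates the order of limits used subsequently: one must send $L\to\infty$ first at fixed $M$ (making the error term $\delta_L(M)$ and the $o(1)$ disappear) and only afterwards let $M\to\infty$ to recover the original $U^{(L)}$. This packaging lemma is what makes it possible to invoke weak compactness arguments for the empirical measures in Section \ref{sec:dec}, since $C$ and $U_M$ are now bounded continuous test functions on the compact space $\Omega$.
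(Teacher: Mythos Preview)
Your proposal is correct and follows essentially the same route as the paper, which presents the proposition simply as ``Altogether, we have obtained'' after assembling \eqref{eq:where}, the decomposition $\sum_i C_i=C+U^{(L)}$, Lemma \ref{Uneg}, and \eqref{eq:continuo}. Your write-up is in fact more explicit than the paper's, including the verification of continuity and boundedness of $C$ and $U_M$; the only cosmetic point is that the residual $o(1)$ from \eqref{eq:where} should be mentioned as being absorbed into the error term $\delta_L(M)$ (both vanish as $L\to\infty$ at fixed $M$).
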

\subsection{Space-time discretization}
\label{sec:discretization}
Divide the torus $\mathcal T_L$ into $N^2$ disjoint square boxes $B_j,j=(j_1,j_2),  1\le j_1,j_2\le N$:
\[B_j=\{u=(u_1,u_2)\in \mathcal T_L: u_a\in \{L(j_a-1)/N ,\ldots,L j_a/N -1)\}, a=1,2\},\quad\] 
of side $ L/N$ and the time interval $[0,t)$ into sub-intervals \[I_j=[t(j-1)/N,tj/N),\quad j\le N.\] To avoid a plethora of $\lfloor \cdot\rfloor$, we pretend that $N$ and $ L/N$ are both integers.
Let 
\[\mathcal I_N=\{(i,j):1\le i\le N, j=(j_1,j_2) ,1\le j_1,j_2\le N\}.\]
Given $(i,j)\in\mathcal I_N$ let
\begin{eqnarray}
\label{eq:zij}
  z^{(i,j)}=(z_1^{(i,j)},z_2^{(i,j)}):=\nabla \psi(u,s)|_{u=j/N,s=ti/N}:
\end{eqnarray}
thanks to the smoothness properties of  the solution of the PDE stated in point (i) of Theorem \ref{th:main}  we have that the discrete
gradient of $\Psi(\cdot,s)$ inside box $B_j$ and for $s\in I_i$ is
given by $z^{(i,j)}$, up to an  error $\epsilon_N$ that is  $o(1)$ (as $N\to\infty$), uniformly in $(i,j),s,u$.
Therefore, for $u\in B_j$ and $s\in I_i$ we can approximate
\begin{gather}
  \label{eq:appr2}
  U_{M}(\eta_{-u}(s),\Psi_{-u}(s))  =U^{(i,j)}_M(\eta_{-u}(s))+\epsilon_{M,N}\\\label{450}
U^{(i,j)}_M(\eta):=-
  (\bar\rho_3-z_1^{(i,j)}-z_2^{(i,j)})|\epsilon(0,\eta)|\frac{k(0,\eta)-1}2{\bf 1}_{|\epsilon(0,\eta)|k(0,\eta)\le M}
\end{gather}
where, for every fixed $M$,
\begin{eqnarray}
  \label{eq:eNM}
  \limsup_{N\to\infty}\limsup_{L\to\infty}\sup_{(i,j)\in\mathcal I_N, u\in B_j,s\in I_i}|\epsilon_{N,M}|=0
\end{eqnarray}
Similarly, we approximate
\begin{eqnarray}
  \label{eq:appr1}
  C(\eta_{-u}(s),\Psi_{-u}(s))=C^{(i,j)}(\eta_{-u}(s))+\epsilon_N
\end{eqnarray}
and $C^{(i,j)}(\eta_{-u}(s))$ is obtained from
$C(\eta_{-u}(s),\Psi_{-u}(s))$ by replacing every occurrence of
$\hat\nabla\Psi(u,s)$ by $z^{(i,j)}$ (see \eqref{eq:pallosa} for the explicit expression). Note that the functions
$\Omega\mapsto C^{(i,j)}(\eta)$ and $\Omega\mapsto U_M^{(i,j)}(\eta)$
are independent of time.

We can rewrite \eqref{eq:lbella} as
\begin{multline}
\label{eq:where2}
  D_L(t)\le D_L(0)+\frac t{N^3}\sum_{(i,j)\in\mathcal I_N}\, p_{L,t,(i,j)}\left[ C^{(i,j)}(\eta)+U^{(i,j)}_{M}(\eta)\right]\\+\delta_L(M)+\epsilon_{M,N}
\end{multline}
where, for every $f:\Omega\mapsto \mathbb R$ we let
\begin{equation}
\label{eq:pL}
p_{L,t,(i,j)}(f(\eta)):= 
\frac1{( L/N)^2}\sum_{u\in B_j}\frac1{t/N}\int_{I_i} \mathbb E
(f(\eta_{-u}(s))) \, \mathrm{d}s.
\end{equation}

Note that the measure $p_{L,t,(i,j)}$ involves a triple average: over
the time interval $I_j$, over the space window $B_j$ and over the
realization of the process.  Since $\Omega$ is compact, the sequence
of probability measures $\{p_{L,t,(i,j)}\}_{L\ge 1}$ is automatically tight, so
it has sub-sequential limits. Let $\{L_m\}_{m\ge1}$ be a sub-sequence such that
$\{p_{L_m,t,(i,j)}\}_{m\ge1}$  converges weakly to a limit point $\nu_{t,(i,j)}$  for every $(i,j)\in\mathcal I_N$.

We have noted above that both functions $C^{(i,j)}$ and (thanks to the cut-off $M$) $U_M^{(i,j)}$ are bounded and continuous on $\Omega$. 
Therefore, by definition of weak convergence,  we have
\begin{eqnarray}
\label{eq:ofc}
  \lim_{m\to\infty} p_{L_m,t,i.j}\left[C^{(i,j)}(\eta)+U^{(i,j)}_M(\eta)\right]=\nu_{t,(i,j)}\left[C^{(i,j)}(\eta)+U^{(i,j)}_M(\eta)\right].
\end{eqnarray}
In conclusion we have proven:
\begin{Proposition}
For every $N>0$ there exists a sub-sequence $\{L_m\}_{m\ge1}$ such that 
\begin{multline}
  \label{eq:inco}
  \limsup_{m\to\infty}D_{L_m}(t)\\\le 
  \frac t{N^3}\sum_{(i,j)\in\mathcal I_N}\nu_{t,(i,j)}(C^{(i,j)}(\eta)+U^{(i,j)}_M(\eta))+\epsilon_{M,N}
\end{multline}
where $\epsilon_{N,M}$ verifies 
  \begin{eqnarray}
  \label{eq:eNM2}
  \limsup_{N\to\infty}\sup_{(i,j)\in\mathcal I_N, u\in B_j,s\in I_i}|\epsilon_{N,M}|=0.
\end{eqnarray}
\end{Proposition}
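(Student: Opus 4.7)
The plan is to pass to the limit $m\to\infty$ in the bound \eqref{eq:where2} after extracting an appropriate sub-sequence; the work is essentially one of assembly, since all the hard estimates have already been established.

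First I would deal with the initial datum. Under Assumption \ref{ass:eta0}, the rescaled initial height function $H_{\eta_0^{(L)}}(\lfloor uL\rfloor)/L$ converges pointwise to $\psi_0(u)=\psi(u,0)$; since the $L^\infty$ norms are controlled (the gradients of both functions stay in the compact set $\mathcal A$), this convergence is in fact uniform in $u$. A Riemann-sum argument then gives $\lim_{L\to\infty}D_L(0)=0$. Similarly, the bound $|\delta_L(M)|\le |\bar\rho_3-\bar\rho^{(L)}_3|M$ shows that, for every fixed $M$, $\delta_L(M)\to 0$ as $L\to\infty$, so this error term can be discarded once $M$ is frozen.

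The central step is the sub-sequential extraction. The metric $d(\eta,\eta')$ introduced just above \eqref{eq:Fatou} makes $\Omega$ a compact metrizable space, so the set of Borel probability measures on $\Omega$ is sequentially compact for weak convergence. For fixed $N$, the index set $\mathcal I_N$ is finite, hence by a diagonal procedure I can extract a single sub-sequence $\{L_m\}_{m\ge 1}$ along which $p_{L_m,t,(i,j)}$ converges weakly to some probability measure $\nu_{t,(i,j)}$ on $\Omega$ simultaneously for every $(i,j)\in\mathcal I_N$.

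It remains to check that the test functions are bounded and continuous on $\Omega$, so that weak convergence applies. The function $C^{(i,j)}$ is obtained from the $C_i$ of Proposition \ref{prop:Ci} by replacing each discrete gradient of $\Psi$ by the constant $z^{(i,j)}$, and therefore depends only on the dimer occupancies in a bounded neighborhood of the origin; it is manifestly bounded. For $U^{(i,j)}_M$, the cut-off ${\bf 1}_{|\epsilon(0,\eta)|k(0,\eta)\le M}$ localizes the dependence on $\eta$ to a window of radius at most $M+O(1)$ around the origin, so $U^{(i,j)}_M$ is also bounded and continuous. Applying the definition of weak convergence term by term then yields \eqref{eq:ofc}, and combining this with the vanishing of $D_L(0)$ and $\delta_L(M)$ in \eqref{eq:where2} produces \eqref{eq:inco}. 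The statement \eqref{eq:eNM2} about $\epsilon_{N,M}$ is inherited directly from \eqref{eq:eNM} and the approximation \eqref{eq:appr1}.
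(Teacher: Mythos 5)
Your proof assembles the same ingredients as the paper does: the sub-sequential extraction of $\nu_{t,(i,j)}$ from the tight family $\{p_{L,t,(i,j)}\}_L$ on the compact space $\Omega$ (simultaneously over the finite index set $\mathcal I_N$), the continuity and boundedness of $C^{(i,j)}$ and $U_M^{(i,j)}$ needed to pass to the limit in \eqref{eq:where2}, and the vanishing of $D_L(0)$ and $\delta_L(M)$. This is precisely the paper's argument, so the proposal is correct and essentially identical in approach.
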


\medskip

The following result will be important in the next section:
\begin{Proposition}
\label{prop:contrk}
For every $t>0$ there exists $K(t)<\infty$ such that, for every $N\ge1$ 
  \begin{eqnarray}
    \label{eq:contrk}
    \limsup_{L\to\infty}\frac1{N^3}\sum_{(i,j)\in\mathcal I_N}p_{L,t,(i,j)}(|\epsilon(0,\eta)|k(0,\eta))\le  K(t).
  \end{eqnarray}
As a consequence, for each family of limit points $\nu_{t,(i,j)},(i,j)\in\mathcal I_N$ of $\{p_{L,t,(i,j)}\}_{L\ge1}$ we have 
\begin{eqnarray}
  \label{eq:nuK}
\frac1{N^3}\sum_{(i,j)\in\mathcal I_N}  \nu_{t,(i,j)}(|\epsilon(0,\eta)|k(0,\eta))\le K(t).
\end{eqnarray}

\end{Proposition}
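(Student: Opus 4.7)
The plan is to extract the $k$-bound from the explicit formula \eqref{eq:B1} for $B_1(t)$, which is the time derivative of the nonnegative quantity $\Phi_L(t) := L^{-4}\mathbb{E}\sum_u H^2(u,t)$. Substituting the identity \eqref{eq:id1} into \eqref{eq:B1} yields
\begin{equation*}
B_1(t) = \frac{1}{L^2}\mathbb{E}\sum_{u\in\mathcal{T}_L}\left(\frac{|\epsilon(u,t)|}{2} + \frac{\bar\rho_3^{(L)}}{2}F(u,\eta(t)) - \frac{1}{2}\sum_{i=1}^2(\hat\nabla_i H(u,t))^2 - \bar\rho_3^{(L)}|\epsilon(u,t)|\frac{k(u,t)-1}{2}\right).
\end{equation*}
The non-$k$ part of the summand lies in $[-1, 1]$ because $|\epsilon|, F \in \{0, 1\}$ and $(\hat\nabla_i H)^2 \in [0, 1]$ by \eqref{eq:gradi}, so rearranging gives
\begin{equation*}
\frac{\bar\rho_3^{(L)}}{2L^2}\mathbb{E}\sum_u|\epsilon(u,t)|\bigl(k(u,t)-1\bigr) \le 1 - B_1(t).
\end{equation*}

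Integrating in time over $[0,t]$, the right-hand side becomes $t + \Phi_L(0) - \Phi_L(t) \le t + \Phi_L(0)$. The initial value $\Phi_L(0)$ is bounded uniformly in $L$: the gradients in \eqref{eq:gradh} lie in $[-1,1]$ and $H_{\eta_0}(0,0) = 0$, so $|H_{\eta_0}(u)| \le L$ on the torus and $\Phi_L(0) \le 1$. Combined with $\bar\rho_3^{(L)} \ge \bar\rho_3/2$ for $L$ large, this yields
\begin{equation*}
\frac{1}{L^2 t}\int_0^t \mathbb{E}\sum_{u \in \mathcal{T}_L}|\epsilon(u,s)|\,k(u,s)\,ds \le K(t)
\end{equation*}
for some $K(t) < \infty$ independent of $L$ and $N$. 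By the definition \eqref{eq:pL} and the shift equivariances $\epsilon(0, \eta_{-u}) = \epsilon(u, \eta)$, $k(0, \eta_{-u}) = k(u, \eta)$, the left-hand side is exactly $\frac{1}{N^3}\sum_{(i,j)\in\mathcal{I}_N}p_{L,t,(i,j)}(|\epsilon(0,\eta)|k(0,\eta))$, and \eqref{eq:contrk} follows on taking $\limsup_L$.

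For the limit statement \eqref{eq:nuK} the obstruction is that $|\epsilon(0,\eta)|k(0,\eta)$ is unbounded, hence not admissible directly for weak convergence. I handle this by truncation: the function $f_M(\eta) := |\epsilon(0,\eta)|(k(0,\eta)\wedge M)$ is bounded by $M$ and continuous on $\Omega$, since its value is determined by finitely many edges in a window of size $O(M)$ around the origin. Weak convergence of $p_{L_m,t,(i,j)}$ to $\nu_{t,(i,j)}$ combined with the uniform bound \eqref{eq:contrk} gives $\frac{1}{N^3}\sum_{(i,j)}\nu_{t,(i,j)}(f_M)\le K(t)$ for every $M$, and monotone convergence as $M \to \infty$ yields \eqref{eq:nuK}. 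The main conceptual step is the very first one: without the summation-by-parts identity \eqref{eq:id1}, the $\epsilon(u,t)H(u,t)$ term in \eqref{eq:B1} has magnitude of order $L$ and obstructs any $O(1)$ Lyapunov bound, so it is crucial that \eqref{eq:id1} trades it for the bounded gradient-squared and $F$-terms.
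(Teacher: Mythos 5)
Your proof is correct and follows essentially the same route as the paper: both bound $\frac1{N^3}\sum_{(i,j)}p_{L,t,(i,j)}(|\epsilon|k)$ by using \eqref{eq:B1} together with the summation-by-parts identity \eqref{eq:id1} to exhibit the time derivative of the nonnegative quantity $\Phi_L(t)=L^{-4}\mathbb E\sum_u H(u,t)^2$ as a sum of a uniformly bounded term and $-\bar\rho_3^{(L)}|\epsilon|(k-1)/2$, then invoke $\Phi_L(0)\le1$ and $\Phi_L(t)\ge0$, and pass to the limit via the truncation $|\epsilon|(k\wedge M)$ and monotone convergence. The only (harmless) stylistic deviation is that you keep the exact $\bar\rho_3^{(L)}$ throughout instead of absorbing its difference from $\bar\rho_3$ into an $o(1)$ error as the paper does via Proposition \ref{prop:Ci}.
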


\begin{proof}[Proof of Proposition \ref{prop:contrk}]
Putting together \eqref{eq:B1bis}  and Proposition  \ref{prop:Ci} we see that
\begin{equation}
  \frac1{L^2}\sum_{u\in\mathcal T_L}\mathbb E\frac{H^2(u,t)}{L^2}=  \frac1{L^2}\sum_{u\in\mathcal T_L}\frac{H^2(u,0)}{L^2}+\frac t{N^3}\sum_{(i,j)\in\mathcal I_N} p_{L,t,(i,j)}\left[C_1(\eta)
\right]+o(1).
\end{equation}
Recalling the definition \eqref{eq:C1} of $C_1(\eta)$ as the sum of 
\[
-\bar\rho^{(L)}_3\frac{k(0,\eta)-1}2|\epsilon(0,\eta)|
\] 
plus  a uniformly bounded function $g(\eta)$, 
we deduce that
\begin{gather}
\label{eq:tpl}
\frac1{N^3}\sum_{(i,j)\in\mathcal I_N}  p_{L,t,(i,j)}(|\epsilon(0,\eta)|k(0,\eta))\le \frac {2}{\bar\rho^{(L)}_3}\|g\|_\infty+\frac{2}{L^2\bar\rho^{(L)}_3t}\sum_{u\in\mathcal T_L}\frac{H(u,0)^2}{L^2}+1.
\end{gather}
Since the height function $H(0,u)=H_{\eta_0}(u)$ is uniformly bounded by $L$ and $\bar\rho^{(L)}_3\to\bar\rho_3>0$, we see that the r.h.s. of \eqref{eq:tpl}  is upper bounded independently of $L,N$ and \eqref{eq:contrk} follows.

For every $M>0$ and sub-sequence $\{L_m\}_{m\ge1}$ along which all sequences $p_{L_m,t,(i,j)},(i,j)\in\mathcal I_N$ have a limit $\nu_{t,(i,j)}$, 
\begin{multline}
  \label{eq:f}
 K(t)\ge \limsup_{m\to\infty} \frac1{N^3}\sum_{(i,j)\in\mathcal I_N}p_{L_m,t,(i,j)}(|\epsilon(0,\eta)|k(0,\eta))\\\ge \limsup_{m\to\infty} \frac1{N^3}\sum_{(i,j)\in\mathcal I_N}p_{L_m,t,(i,j)}[M\wedge |\epsilon(0,\eta)|k(0,\eta)]\\=\frac1{N^3}\sum_{(i,j)\in\mathcal I_N}\nu_{t,(i,j)}[M\wedge |\epsilon(0,\eta)|k(0,\eta)]
\end{multline}
where we used the fact that the mapping
$\eta\mapsto M\wedge |\epsilon(0,\eta)|k(0,\eta)$ is continuous as we
mentioned above. By monotone convergence, letting $M\to\infty$, we
deduce \eqref{eq:nuK}.
\end{proof}

\section{Local equilibrium}\label{sec:dec}

The goal of this section is to show how to compute the r.h.s. of \eqref{eq:inco}. The crucial point is that each measure $\nu_{t,(i,j)}$ is a suitable linear combination of translation invariant, ergodic Gibbs states. This is the 
content of Theorem \ref{th:undec2}.

\subsection{Decomposition of $\nu_{t,(i,j)}$ into Gibbs states}

Heuristically, one expects that at time $t>0$, the local statistics of the dimer configuration
$\eta(t)$ around a point $u\in \mathcal T_L$ will be approximately
that of $\eta$ sampled from a Gibbs state with suitable densities $\nabla_1 \psi(u/L,t),\nabla_2 \psi(u/L,t)$
of dimers of types $1$ and $2$, respectively. Theorem
\ref{th:undec2} below is in a sense a much weaker statement, since it
says only that the (locally) time-space averaged measures $p_{L,t,(i,j)}$ are close to 
linear combinations, with unknown weights, of Gibbs states. This weaker information  is however sufficient for our
purposes (see also Remark \ref{rem:mediegiuste} below).

Recall that $\Omega$ is the set of all perfect matchings of
$\mathcal H$, that we endow with the Borel $\sigma$-algebra generated
by cylindrical sets.  Let $\mathcal E$ be the set of all edges of
$\mathcal H$.  Given $\Lambda\subset \mathcal E$ and $\eta\in\Omega$, we let
$\eta|_\Lambda$ denote the restriction of $\eta$ to $\Lambda$.  A probability
measure $\lambda$ on $\Omega$ is called a \emph{Gibbs measure} if, for every
finite subset $\Lambda\subset\mathcal E$ and for $\lambda$-almost
every dimer configuration $\eta_{\mathcal E\setminus\Lambda }$ on the
edges not in $\Lambda$, the conditional law
$\lambda(\cdot|\eta_{\mathcal E\setminus\Lambda})$ is the uniform law
on the finite set of dimer configurations in $\Lambda$ compatible with
$\eta_{\mathcal E\setminus\Lambda}$ \cite{Sheffield,Georgii}.
We let $\mathcal G_{\mathcal T}$ denote the set of Gibbs measures
that are invariant under the group of translations in $\mathcal T$. 
\begin{Definition}
For every
$\lambda\in \mathcal G_{\mathcal T}$, let
$\hat\rho(\lambda)=(\hat\rho_1(\lambda),\hat \rho_2(\lambda))$, with
$\hat\rho_i(\lambda)=\lambda({\bf 1}_{b_i(0)\in\eta})$ the average
density of dimers of type $i$ under $\lambda$. Clearly, by the
definition of height function,
$\hat\rho(\lambda)\in\mathbb T\cup \partial\mathbb T$.   
\end{Definition}
 Also, let
${\rm ex}\, \mathcal G_{\mathcal T}\subset \mathcal G_{\mathcal T}$ denote the subset of
 Gibbs measures that are ergodic w.r.t.
translations.  If $\rho\in \partial\mathbb T$, then there may exist in
general several Gibbs measures $\lambda \in {\rm ex}\, \mathcal G_{\mathcal T}$ with
$\hat \rho(\lambda)=\rho$. If instead $\rho\in \mathbb T$, it is known
\cite{Sheffield} that there is a unique measure
$\pi_\rho\in {\rm ex}\, \mathcal G_{\mathcal T}$ such that
$\hat\rho(\pi_\rho)=\rho$. In that case, the measure
$\pi_\rho$ can be obtained as the limit as $L\to\infty$ of the uniform measure
on $\Omega_{\bar\rho^{(L)}}$  (provided that
$\bar \rho^{(L)}\to \rho$) and it has a determinantal structure with a rather
explicit kernel
 and power-law decaying correlations \cite{KOS}. 

It is also known that $\mathcal G_{\mathcal T}$ is convex (and actually even a simplex) and that its extreme points are the ergodic measures,  so that the following   decomposition theorem  holds:
\begin{Theorem}(\cite{Georgii}, cf. also \cite[Lemma 3.2.4]{Sheffield})
\label{th:Georgii}
  Given $\nu\in\mathcal G_{\mathcal T}$,
 there exists a unique  $w_\nu\in\mathcal P({\rm ex}\, \mathcal G_{\mathcal T} )$
  (the set of probability measures on
  ${\rm ex}\, \mathcal G_{\mathcal T}$)
such that
\begin{eqnarray}
  \label{eq:undec}
  \nu(\mathrm{d}\eta)=\int_{{\rm ex}\, \mathcal G_{\mathcal T}} \lambda(\mathrm{d}\eta) \, \mathrm{d} w_\nu(\lambda).
\end{eqnarray}

\end{Theorem}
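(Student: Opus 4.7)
The plan is to construct the decomposition via ergodic-theoretic disintegration of $\nu$ along its translation-invariant $\sigma$-algebra, verify that the resulting components inherit the Gibbs property, and deduce uniqueness from mutual singularity of distinct ergodic components.

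Let $\mathcal{I}$ denote the $\sigma$-algebra of $\mathcal{T}$-invariant Borel sets in $\Omega$. Since $\Omega$ is a compact Polish space and $\nu$ is a translation-invariant Borel probability measure, regular conditional probabilities $\eta \mapsto \nu_\eta := \nu(\cdot \mid \mathcal{I})(\eta)$ exist for $\nu$-almost every $\eta$; by the multidimensional Birkhoff theorem, $\nu_\eta$ is translation-invariant and ergodic, and by construction $\nu = \int \nu_\eta \, \nu(\mathrm{d}\eta)$. Taking $w_\nu$ to be the pushforward of $\nu$ under $\eta \mapsto \nu_\eta$ yields $\nu = \int \lambda \, \mathrm{d}w_\nu(\lambda)$ with $w_\nu$ concentrated on ergodic translation-invariant probability measures.

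The key step is to show that $\nu_\eta \in \mathcal{G}_\mathcal{T}$ for $\nu$-almost every $\eta$. Here one uses the classical fact that for a $\mathbb{Z}^2$-action the invariant $\sigma$-algebra $\mathcal{I}$ is contained in the tail $\bigcap_{\Lambda \text{ finite}} \sigma(\eta|_{\mathcal{E}\setminus\Lambda})$ up to $\nu$-null sets: this follows by applying Birkhoff's theorem to cylindrical functions and noting that the spatial averages $|B_N|^{-1}\sum_{v\in B_N} f\circ\tau_v$ are asymptotically insensitive to the values of $\eta$ on any fixed finite edge set. Consequently, for every finite $\Lambda$, writing $\gamma_\Lambda$ for the Gibbs specification kernel (uniform on configurations in $\Lambda$ compatible with the boundary) and using the DLR property $E_\nu[f \mid \sigma(\eta|_{\mathcal{E}\setminus\Lambda})] = \gamma_\Lambda f$ together with the tower property,
\[
\nu_\eta(\gamma_\Lambda f) = E_\nu\bigl[\gamma_\Lambda f \,\big|\, \mathcal{I}\bigr](\eta) = E_\nu\bigl[E_\nu[f \mid \sigma(\eta|_{\mathcal{E}\setminus\Lambda})] \,\big|\, \mathcal{I}\bigr](\eta) = E_\nu[f \mid \mathcal{I}](\eta) = \nu_\eta(f),
\]
which is the DLR equation for $\nu_\eta$, so $\nu_\eta \in \mathcal{G}_\mathcal{T}$ and, combined with ergodicity, $\nu_\eta \in {\rm ex}\,\mathcal{G}_\mathcal{T}$.

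For uniqueness, any two representing measures $w, w'$ on ${\rm ex}\,\mathcal{G}_\mathcal{T}$ must coincide: by Birkhoff's theorem each ergodic $\lambda$ is concentrated on the pairwise disjoint $\mathcal{I}$-measurable basin $B_\lambda := \{\eta : \text{spatial averages of every cylindrical } f \text{ converge to } \lambda(f)\}$, so the map $T(\eta) := \lambda$ with $\eta \in B_\lambda$ is $\nu$-almost everywhere defined and measurable, and testing $\nu = \int \lambda \, \mathrm{d}w(\lambda)$ against bounded measurable functions of the form $F \circ T$ gives $w = T_*\nu$, and similarly $w' = T_*\nu$. The main technical obstacle is the delicate interplay between invariance and tail-measurability required to transfer the Gibbs property through the disintegration, together with endowing ${\rm ex}\,\mathcal{G}_\mathcal{T}$ with a measurable structure for which $T_*\nu$ is well defined; this is precisely the content of the careful framework developed in the Georgii and Sheffield references.
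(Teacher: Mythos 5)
The paper does not supply its own proof of Theorem \ref{th:Georgii}; it is imported verbatim from the cited references (Georgii's book, Sheffield's monograph). Your argument is a correct rendition of exactly the standard ergodic-decomposition proof found there: disintegration along the invariant $\sigma$-algebra, the inclusion of $\mathcal{I}$ in the tail $\sigma$-algebra modulo $\nu$-null sets to push the DLR equation through the conditioning, and uniqueness via the mutually singular Birkhoff basins.
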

Before proving that the limit measures $\nu_{t,(i,j)}$ have a decomposition of the type \eqref{eq:undec},  we need a preliminary observation:
\begin{Proposition}
\label{prop:kappa}
 Given $\lambda\in {\rm ex}\, \mathcal G_{\mathcal T}$ with $\hat \rho(\lambda)=\rho$ we have: 
  \begin{enumerate}
  \item if $\rho\in\mathbb T$ then
    \begin{eqnarray}
      \label{eq:rho1}
      \lambda(|\epsilon(0,\eta)|k(0,\eta))<\infty.
    \end{eqnarray}
  \item if $\rho\in\partial\mathbb T$ and $\rho_1>0,\rho_2>0$ then 
    \begin{eqnarray}
      \lambda(|\epsilon(0,\eta)|k(0,\eta))=\infty.
    \end{eqnarray}
  \item if $\rho\in\partial\mathbb T$ and  $\min(\rho_1,\rho_2)=0$ then
\begin{eqnarray}
      \lambda(|\epsilon(0,\eta)|k(0,\eta))=0.
    \end{eqnarray}
  \end{enumerate}  
\end{Proposition}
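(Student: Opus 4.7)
The plan is to treat the three cases in turn. Case (iii) I would do first since it is a one-line reduction: if for example $\rho_1 = 0$, then $\lambda(b_1(u) \in \eta) = 0$ for every $u$ by translation invariance, so by countable additivity $\lambda$-a.s.\ no type-$1$ edge is occupied; by the definition \eqref{eq:eps} of $\epsilon$ this forces $\epsilon \equiv 0$ almost surely and the expectation vanishes.

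For case (ii), the condition $\rho \in \partial \mathbb T$ with $\rho_1, \rho_2 > 0$ forces $\rho_3 = 0$, and the same translation-invariance argument gives that $\lambda$-a.s.\ no type-$3$ dimer is present. On the event $\{\epsilon(0, \eta) \ne 0\}$ there is then no obstruction to a vertical displacement in direction $\pm \hat e_3$, so $k(0, \eta) = +\infty$ by the definition \eqref{eq:ka}. It therefore suffices to show $\lambda(\epsilon(0, \eta) \ne 0) > 0$. The strategy is to delete all type-$3$ edges from $\mathcal H$: the remaining graph is $2$-regular and decomposes as a disjoint union of bi-infinite ``zigzag'' paths alternating between types $1$ and $2$, each admitting exactly two perfect matchings, label $A$ (only its type-$1$ edges are dimers) and label $B$ (only its type-$2$ edges). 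Each hexagonal face is bounded by edges from exactly two distinct zigzags $Z, Z'$, and a short case analysis of the four label pairs in $\{A,B\}^2$ shows $\epsilon(u, \eta) \ne 0$ iff $Z, Z'$ carry opposite labels. If $\lambda(\epsilon(0, \eta) \ne 0) = 0$, translation invariance propagates this to $\epsilon \equiv 0$ almost surely, forcing adjacent zigzags to share labels; since zigzag-adjacency through a common bounding hexagon is connected, this in turn forces all zigzags to carry the same label. Since the events ``all zigzags carry label $A$'' and ``all zigzags carry label $B$'' are translation-invariant with total $\lambda$-measure $1$, ergodicity of $\lambda$ makes one of them have probability $1$, which gives $\rho_1 \in \{0, 1\}$ and contradicts $\rho_1, \rho_2 > 0$.

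For case (i), $\lambda = \pi_\rho$ admits a determinantal description whose correlation kernel $K$ on type-$3$ dimer positions is a projection in $\ell^2$ \cite{KOS}. Since $|\epsilon(0, \eta)| k(0, \eta) \ge n$ forces $b_3(j \hat e_3) \notin \eta$ for $j = 0, \dots, n - 2$ (for either sign of $\epsilon$),
\begin{equation*}
  \lambda\bigl(|\epsilon(0, \eta)| k(0, \eta) \ge n\bigr) \le 2 \det(I - K|_{A_n}), \qquad A_n := \{b_3(j \hat e_3)\}_{j=0}^{n-2}.
\end{equation*}
Since $K$ is a projection, $I - K|_{A_n}$ is a principal submatrix of the positive semi-definite operator $I - K$, and Hadamard's inequality for positive semidefinite matrices gives $\det(I - K|_{A_n}) \le \prod_{j=0}^{n-2}(1 - K(b_3(j\hat e_3), b_3(j\hat e_3))) = (1 - \rho_3)^{n-1}$. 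Summing in $n$ yields $\lambda(|\epsilon(0, \eta)| k(0, \eta)) \le 2/\rho_3 < \infty$.

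The main obstacle is case (ii): cases (i) and (iii) reduce respectively to a standard Hadamard estimate on a determinantal gap probability and to a soft translation-invariance argument, whereas case (ii) genuinely requires the combinatorial identification of the zigzag decomposition of dimer configurations without any type-$3$ dimer, together with the use of ergodicity to rule out the degenerate all-same-label situation.
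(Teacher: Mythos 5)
Your proposal is correct and recovers the statement, but it goes through case (i) by a genuinely different route than the paper, and it supplies substantially more detail than the paper does for case (ii). For case (iii) your argument is the paper's argument verbatim. For case (ii), the paper disposes of the positive probability of $\{b_1(0),b_2(0)\}\subset\eta$ in one sentence, by appealing loosely to translation invariance and ergodicity; your zigzag decomposition of the type-$3$-free honeycomb into bi-infinite alternating paths, the observation that $\epsilon(u,\eta)\ne0$ exactly when the two zigzags bounding $u$ carry opposite labels, and the ergodicity argument ruling out the all-$A$/all-$B$ configurations, together give a concrete proof of exactly the step the paper asserts without justification — this is a genuine improvement in rigor, in the same spirit as the paper's argument. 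For case (i) the paper simply cites \cite[Lemma A.1]{T2+1} for the exponential decay of the gap between consecutive type-$3$ dimers in a vertical column under $\pi_\rho$; your argument re-derives that tail bound directly, via the identity $\lambda(\eta\cap A=\emptyset)=\det(I-K|_A)$ for the column process, the fact that the single-column kernel is a projection (the discrete sine kernel — note that \cite{KOS} is not quite the right citation for the column restriction being a projection, which is an Okounkov--Reshetikhin-type fact), and Fischer's inequality for positive semidefinite principal submatrices. This buys a self-contained quantitative bound $\lambda(|\epsilon(0,\eta)|k(0,\eta))\le 2/\rho_3$ at the cost of an unverified structural claim about the kernel. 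One minor inaccuracy: when $\epsilon(0,\eta)=-1$ the forced vacancies lie at $b_3(-j\hat e_3)$, $j=0,\dots,n-2$, i.e.\ in the \emph{opposite} vertical direction from the $\epsilon=+1$ case, not at the set $A_n$ you wrote; your factor $2$ and the determinant bound survive because translation invariance (or reflection symmetry of $\pi_\rho$) makes the two gap probabilities equal, but the phrase ``for either sign of $\epsilon$'' as written is incorrect and should be replaced by this symmetry observation.
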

\begin{proof}[Proof of Proposition \ref{prop:kappa}]

  \emph{Claim (1)}. In this case $\lambda=\pi_\rho$, the measure obtained as the $L\to\infty$ limit of the uniform measure on $\Omega_{\rho^{(L)}}$with $\rho^{(L)}\to\rho$. Recall, as discussed
  just after \eqref{eq:ka}, that if $\epsilon(0,\eta)=1$
  (resp. $\epsilon(0,\eta)=-1$) then $k(0,\eta)$ is the smallest $n\ge1$
  such that there is a dimer at the horizontal edge $b_3((n-1) \hat e_3)$ (resp. at
  $b_3 (-n\hat e_3)$).  On the other hand, it is well known (see
  e.g. \cite[Lemma A.1]{T2+1}) that under the measure $\pi_\rho$, the
  distance between two consecutive horizontal dimers  in the same
  vertical column is a random variable with exponential
  tails. Eq. \eqref{eq:rho1} then
  follows. 
  
\emph{Claim (2)}. Since both $\rho_1$ and $\rho_2$ are strictly
positive and $\lambda$ is translation-invariant and ergodic, there is a non-zero
probability that both $b_1(0)$ and $b_2(0)$ belong to $\eta$, in which
case $\epsilon(0,\eta)=1$. On the other hand, since $\rho_1+\rho_2=1$,
there are no dimers of type $3$ and, on the event $\epsilon(0,\eta)=1$, one
has $k(0,\eta)=+\infty$.

\emph{Claim (3)}. Just note that in this case there is almost surely
either no dimer of type $1$ or no dimer of type $2$. Then, from
definition \eqref{eq:epsilon} we see that $\epsilon(0,\eta)=0$ almost
surely.
\end{proof}

The main step in the computation of the r.h.s. of \eqref{eq:inco} will be to show that  any limit point of $p_{L, t, (i,j)}$ admits a decomposition of the type \eqref{eq:undec}:
\begin{Theorem}
\label{th:undec2}
 Let $\nu=\nu_{t,(i,j)}$  be a limit  point  of $\{p_{L,t,(i,j)}\}_L$. There exists a unique
  $w_\nu\in\mathcal P({\rm ex}\, \mathcal G_{\mathcal T} )$
  such that
  \begin{eqnarray}
    \label{eq:ptolimite}
    \nu(d\eta )=\int_{{\rm ex}\, \mathcal G_{\mathcal T}}w_\nu(d\lambda )\lambda(d\eta).
  \end{eqnarray}
Moreover, $w_\nu$ gives mass zero to the subset  \[\{\lambda\in {\rm ex} \mathcal G_{\mathcal T}:  \hat\rho_1(\lambda)>0,\hat\rho_2(\lambda)>0,\hat\rho_1(\lambda)+\hat\rho_2(\lambda)=1 \}.
\]
\end{Theorem}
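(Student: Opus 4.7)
The plan is to decompose the proof into three stages: (a) showing that any subsequential limit $\nu = \nu_{t,(i,j)}$ is translation invariant, (b) showing that $\nu$ is a Gibbs measure, and (c) showing that $w_\nu$ puts no mass on the degenerate set. Once (a) and (b) are established, the existence and uniqueness of $w_\nu$ is delivered directly by Theorem \ref{th:Georgii}. Step (b) is the main obstacle.

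Translation invariance is the easiest step. The definition \eqref{eq:pL} of $p_{L,t,(i,j)}$ averages over all $u$ in a box $B_j$ of side $L/N \to \infty$. For a cylindrical function $g$ on $\Omega$ with support in a fixed finite window and a fixed lattice translation $\tau_v$, the spatial averages of $g(\eta_{-u})$ and of $g(\eta_{-u-v})$ over $u \in B_j$ differ only by boundary terms of size $O(|v|(L/N))$ out of $(L/N)^2$ total, so they coincide in the limit. Passing to the subsequential limit yields $\nu \circ \tau_v = \nu$ for every $v \in \mathcal{T}$.

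The heart of the argument is proving Gibbsianness. I would use an entropy-production estimate in the spirit of Kipnis--Landim. Let $f^L_s = dP^L_s/d\pi_L$ denote the density of the law of $\eta(s)$ with respect to the reversible uniform measure $\pi_L$ on $\Omega_{\bar\rho^{(L)}}$. Since the initial law is concentrated on $\eta_0$ and $|\Omega_{\bar\rho^{(L)}}| = e^{O(L^2)}$, we have the crude bound $H(P^L_0 \mid \pi_L) = O(L^2)$. The $H$-theorem for reversible chains gives
\[
\int_0^t \mathcal{D}_L\bigl(\sqrt{f^L_s}\bigr) \, ds \le \tfrac12 H(P^L_0 \mid \pi_L) = O(L^2),
\]
where $\mathcal{D}_L$ is the Dirichlet form incorporating the rates $L^2/(2n)$. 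The extra factor $L^2$ in the rates is precisely what makes this bound nontrivial per site and per microscopic rate. By Jensen's inequality the same bound holds for the time-averaged density $\bar f^L_t$, and hence for $p_{L,t,(i,j)}$. Restricting attention to length-$1$ particle jumps (which are always available among the allowed moves) and passing to the weak limit along the chosen subsequence, the single-flip Dirichlet form evaluated on $\nu$ must vanish. Because single flips generate the whole configuration space within any finite region at fixed boundary conditions (the standard connectivity argument used in Section \ref{sec:thedyn}), this vanishing is equivalent to the DLR condition: for every finite $\Lambda \subset \mathcal{E}$ and $\nu$-almost every exterior configuration, the conditional law is uniform on compatible configurations. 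Thus $\nu \in \mathcal{G}_{\mathcal{T}}$ and Theorem \ref{th:Georgii} furnishes the decomposition \eqref{eq:ptolimite}. The subtlety here is transferring the Dirichlet-form bound from finite-volume, time-averaged laws to the weak limit, which requires approximating local non-cylindrical quantities by cylindrical ones and using lower semi-continuity of the Dirichlet form under weak convergence; this is routine but has to be handled carefully.

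For the final claim about the support of $w_\nu$, I combine Proposition \ref{prop:contrk} with Proposition \ref{prop:kappa}(2). The finite-sum bound \eqref{eq:nuK} together with non-negativity of $|\epsilon(0,\eta)| k(0,\eta)$ forces each individual $\nu_{t,(i,j)}(|\epsilon(0,\eta)| k(0,\eta))$ to be finite. Applying the decomposition \eqref{eq:ptolimite} and monotone convergence,
\[
\int_{\mathrm{ex}\,\mathcal{G}_{\mathcal{T}}} \lambda(|\epsilon(0,\eta)| k(0,\eta)) \, dw_\nu(\lambda) = \nu(|\epsilon(0,\eta)| k(0,\eta)) < \infty.
\]
By Proposition \ref{prop:kappa}(2), the integrand is $+\infty$ on the set $\mathcal{B} := \{\lambda \in \mathrm{ex}\,\mathcal{G}_{\mathcal{T}} : \hat\rho_1(\lambda) > 0, \, \hat\rho_2(\lambda) > 0, \, \hat\rho_1(\lambda) + \hat\rho_2(\lambda) = 1\}$, so finiteness of the integral forces $w_\nu(\mathcal{B}) = 0$, completing the proof.
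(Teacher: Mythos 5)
Your overall roadmap (translation invariance $\Rightarrow$ entropy-production bound $\Rightarrow$ Gibbsianness $\Rightarrow$ apply Theorem~\ref{th:Georgii} $\Rightarrow$ support estimate via Propositions~\ref{prop:contrk} and~\ref{prop:kappa}) is the same as the paper's, and parts (a) and (c) are essentially correct and identical in spirit. However, there are two genuine gaps in part (b), the Gibbsianness step, and the second one is a false claim.

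\textbf{First gap: from the extensive entropy-production bound to vanishing local Dirichlet form.} The bound coming from the $H$-theorem and Jensen's inequality is $I_L(p_L)\le \mathcal{C}N/t$ (cf.~Lemma~\ref{le:prod_tore}), which is $O(1)$ in $L$, \emph{not} $o(1)$. Lower semi-continuity of the Dirichlet form under weak convergence, which you invoke, therefore does \emph{not} by itself give that any local (fixed finite window) Dirichlet form of $\nu$ vanishes --- you would only get a finite upper bound. What is needed is the additional observation that $I_L$ is an extensive quantity: one averages the local Dirichlet forms $I_{\Lambda^v}$ over $n^2$ translates $v\in V_n$, uses that this spatial average is bounded by $f(\Lambda)\,\mathcal{C}N/(n^2\,t)$, and then exploits translation invariance of the limit $\nu$ to equate $I_\Lambda(\nu)$ with the average over $v\in V_n$ of $I_{\Lambda^v}(\nu)$; sending $n\to\infty$ after $L\to\infty$ then yields $I_\Lambda(\nu)=0$ (this is Lemma~\ref{le:prod_locale}). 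Your sketch does not contain this averaging step, and without it the conclusion does not follow.

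\textbf{Second gap (a false claim): ergodicity of the restricted dynamics.} You assert that ``single flips generate the whole configuration space within any finite region at fixed boundary conditions.'' This is false for a general finite set of edges $\Lambda$: the paper's Figure~\ref{fig:nonergodico} exhibits a domain $\Lambda$ and two configurations that agree outside $\Lambda$ but cannot be connected by any sequence of moves (single flips or long jumps) that do not touch edges outside $\Lambda$. The connectivity argument from Section~\ref{sec:thedyn} applies to the full torus and does not transfer to restricted dynamics in arbitrary finite windows. The correct argument requires (i) choosing a specific exhausting family of domains $\Lambda_n$ that contain all edges of all faces in their interior, (ii) proving ergodicity of the restricted dynamics for this family (Claim~\ref{claim:ergodico}, proved via a monotone-path / mutual-volume argument), and (iii) deducing the DLR condition for arbitrary finite $\Lambda$ from that for the $\Lambda_n$'s by a conditioning argument, taking $\Lambda\subset\Lambda_n$ and observing that conditioning a uniform measure to a smaller box stays uniform, then letting $n\to\infty$. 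Your proposal skips all three of these steps, and the shortcut you propose in their place does not hold.

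Finally, a minor inaccuracy: the finiteness $\nu(|\epsilon(0,\eta)|k(0,\eta))<\infty$ does follow from \eqref{eq:nuK} as you say, but note that Proposition~\ref{prop:kappa}(2) (not~(3) as the paper's text mistakenly cites) is the relevant ingredient for the support bound; you cite it correctly.
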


The next few subsections will be the proof of this theorem. For lightness of notation, we will let $p_L:=p_{L,t,(i,j)}$.
\begin{Remark}\label{rem:noRiesz}
  In \cite[Th. 4.1]{FunakiSpohn}, for the Ginzburg-Landau model, a different
  decomposition theorem was given, for a measure that couples the
  gradients of the height function $H$ and those of the deterministic
  solution of a discretization of the hydrodynamic PDE.  An attempt to adapt
   the rather abstract proof of that result to our model runs into problems  at the step where the Riesz-Markov representation theorem is needed.
The basic reason
  is that, in our case, there are values of $\rho$ (those on the
  boundary of $\mathbb T$) for which more than one ergodic Gibbs
  measure can exist (this phenomenon does not happen for the
  Ginzburg-Landau model).  The ``mesoscopic discretization procedure''
  we devised in Section \ref{sec:discretization}  allows one to avoid
  altogether the use of the coupled measure and also to deal only with finitely many ($N^3$ of them) space-time averaged measures $\nu_{t,(i,j)}$, instead of infinitely many of them as is the case in \cite{FunakiSpohn}.
\end{Remark}

\subsection{Proof of Theorem \ref{th:undec2}}
The proof is divided into various steps. First we show that
$\nu_{t,(i,j)}$ is translation invariant (Section \ref{sub:ti}). Next,
we prove that it has zero entropy production (Section
\ref{sub:ep}). Then, we conclude $\nu_{t,(i,j)}$ is a Gibbs measure
and therefore the decomposition \eqref{eq:ptolimite} holds (Section
\ref{sup:gibbs}). Finally we prove the claim on the support of $w_\nu$
(Section \ref{sub:sup}).
\subsubsection{Translation invariance} \label{sub:ti} The measure $p_L$ is not
translation invariant (it would be if the window $B_j$ in
\eqref{eq:pL} were replaced by the whole torus $\mathcal
T_L$, i.e. if $N=1$). However   since the box $B_j$ is   macroscopic, translation invariance is recovered in the $L\to\infty $ limit:
\begin{Lemma}\label{le:translation_invariance}
Every  limit point $\nu$ of $\{p_L\}_{L\ge1}$ is translation invariant.
\end{Lemma}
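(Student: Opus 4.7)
The plan is to exploit the fact that although the spatial average in the definition of $p_L$ is taken only over the mesoscopic box $B_j$, this box has side $L/N \to \infty$, so boundary effects from shifting the summation index vanish in the limit.

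First I would reduce to showing $\nu(f \circ \tau_v) = \nu(f)$ for every $v \in \mathcal{T}$ and every bounded continuous cylindrical function $f:\Omega \to \mathbb{R}$ depending on finitely many edges. By weak convergence, this is equivalent to
\[
\lim_{m\to\infty}\bigl[p_{L_m}(f\circ\tau_v) - p_{L_m}(f)\bigr] = 0,
\]
along the subsequence realizing $\nu$. Since the cylindrical continuous functions generate the Borel $\sigma$-algebra on the compact space $\Omega$ and form an algebra that separates points, this will suffice to conclude translation invariance of $\nu$.

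The main computation is a change of variable in the spatial sum. By definition of $p_L$,
\[
p_L(f\circ\tau_v) = \frac{1}{(L/N)^2}\sum_{u\in B_j}\frac{N}{t}\int_{I_i}\mathbb{E}\bigl[f(\eta_{-(u+v)}(s))\bigr]\,\mathrm{d}s = \frac{1}{(L/N)^2}\sum_{u'\in B_j+v}\frac{N}{t}\int_{I_i}\mathbb{E}\bigl[f(\eta_{-u'}(s))\bigr]\,\mathrm{d}s,
\]
where the shift $B_j+v$ is understood modulo $L$, which is legitimate since $\eta(s)$ is $L$-periodic. Subtracting $p_L(f)$ leaves only the contribution from the symmetric difference $(B_j+v)\triangle B_j$. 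This symmetric difference is contained in a strip of width $|v|$ along the boundary of the box $B_j$, hence has cardinality bounded by $C |v|\,(L/N)$ for a constant $C$ depending only on the underlying lattice. Each term in the sum is bounded by $\|f\|_\infty$, so
\[
\bigl|p_L(f\circ\tau_v) - p_L(f)\bigr| \le \frac{C |v|\,(L/N)}{(L/N)^2}\,\|f\|_\infty = \frac{C N |v|}{L}\,\|f\|_\infty,
\]
which tends to $0$ as $L\to\infty$ with $N$ and $v$ fixed.

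Passing to the limit along the subsequence defining $\nu$ and using weak convergence (the function $f\circ\tau_v$ is still bounded and continuous on $\Omega$) gives $\nu(f\circ\tau_v) = \nu(f)$ for every local continuous $f$ and every $v\in\mathcal{T}$, which is the desired translation invariance. The main (minor) obstacle is simply the careful bookkeeping of the boundary of $B_j$; no deeper ingredient is needed because the spatial window is already macroscopically large.
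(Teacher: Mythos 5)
Your argument is exactly the paper's proof: change variables in the spatial sum and bound the discrepancy by $\|f\|_\infty\,|B_j\triangle(\tau_{-v}B_j)|/|B_j| = O(N/L)\to 0$, then pass to the weak limit. The bookkeeping (symmetric difference of cardinality $O(L/N)$ over normalization $(L/N)^2$) matches the paper, so there is nothing to add.
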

\begin{proof}
Let
$f$ be a continuous bounded function on $\Omega$ and $v\in\mathcal T$. One has
\begin{equation}
  |p_L(f)-p_L(f\circ \tau_v)|\le  \|f\|_\infty\frac{|B_j\Delta (\tau_{-v} B_j)|}{|B_j|}
\end{equation}
where $ B_j\Delta (\tau_{-v} B_j)$ is the symmetric difference between $B_j$ and its translate $\tau_{-v} B_j$. Since
$|B_j|=(L/N)^2$ and $|B_j\Delta (\tau_{-v} B_j)|=O(L/N)$, we conclude that $\nu(f)=\nu(f\circ \tau_v)$ for every $v$.
\end{proof}

In order to obtain \eqref{eq:ptolimite}, it is then enough to prove
that $\nu$ is a Gibbs measure and then to apply Theorem
\ref{th:Georgii}.

\subsubsection{Total and local entropy production.}
\label{sub:ep}
 Given a probability distribution $r$ on
$\Omega_{\bar\rho^{(L)}}$, we let
\begin{equation}
  \label{eq:entropia}
  H_L(r|\pi_L)=\sum_{\eta\in\Omega_{\bar\rho^{(L)}}}r(\eta)\log\frac{r(\eta)}{\pi_L(\eta)} \ge 0
\end{equation}
denote the relative entropy of $r$ with respect to $\pi_L$, the
uniform measure on $\Omega_{\bar\rho^{(L)}}$. Also, we let
\begin{multline}
  \label{eq:Itilde}
 \tilde I_L(r):=\frac 1{2L^2}\sum_{\eta\ne\eta'\in \Omega_{\bar\rho^{(L)}}}\pi_L(\eta)\mathcal L_{\eta \eta'}\left[
\frac{r(\eta)}{\pi_L(\eta)}-\frac{r(\eta')}{\pi_L(\eta')}
\right]\\\times\left[\log\frac{r(\eta)}{\pi_L(\eta)}-\log\frac{r(\eta')}{\pi_L(\eta')}  \right]\ge 0
\end{multline}
denote its \emph{entropy production functional}, where
$ \mathcal L_{\eta \eta'}$ is the transition rate from $\eta$ to
$\eta'$ (recall that $\mathcal L_{\eta \eta'}$ is of order $L^2$).
Since $\frac1{L^2}\log|\Omega_{\bar\rho^{(L)}}|$ tends to a positive
constant as $L\to\infty$ \cite{KOS} (the limit is the surface entropy
 $-\sigma(\bar\rho)$, see
\eqref{eq:sigma}) and $\pi_L(\eta)=1/|\Omega_{\bar\rho^{(L)}}|$, one
has the uniform bound
\begin{equation}
\label{eq:bdHL}
  H_L(r|\pi_L)\le \mathcal C L^2 \text{ for every } r\in \mathcal P(\Omega_{\bar\rho^{(L)}}),
\end{equation}
for some $\mathcal C=\mathcal C(\bar\rho)<\infty$.  

The name ``entropy
production'' for $\tilde I_L(r)$ is justified by the fact that, if $r=r_{L,t}$ is the law
of the process at time $t$ with some initial distribution $r_{L,0}$, we have  (using reversibility of
$\pi_L$)
\begin{eqnarray}
  \label{eq:dentr}
  \frac d{dt} H_L(r_{L,t}|\pi_L)=-L^2 \tilde I_L(r_{L,t})\le 0.
\end{eqnarray}
 The entropy production functional $r\mapsto \tilde I_L(r)$ is
 convex and this implies (recalling the definition \eqref{eq:pL} of $p_L=p_{L,t,(i,j)}$)
\begin{multline}
  \label{eq:fs1}
  \tilde I_L(p_L)\le \frac1{|I_i|}\int_{I_i} \mathrm{d}s \frac1{|B_j|}\sum_{u\in B_j} \tilde I_L(r_{L,s}\circ \tau_{-u})=\frac1{|I_i|}\int_{I_i} \mathrm{d}s\tilde I_L(r_{L,s})
\\
=\frac N{tL^2} (H_L(r_{L,(i-1)/N}|\pi_L)-H_L(r_{L,i/N}|\pi_L))\le  \frac{\mathcal CN}t
\end{multline}
where in the first line $\tau_{-u}$ denotes translation by $-u$. In
the first line we used translation invariance of $\pi_L$ and of the
transition rates to write
\[\tilde I_{L}(r_{L,s} \circ \tau_{-u}) = \tilde I_L(r_{L,s}).\]
In the second line we used \eqref{eq:bdHL}, together with $H_L\ge 0$.  If
we define also
\begin{eqnarray}
\label{eq:were}
  I_L(r):=\frac1{2L^2}\sum_{\eta\ne\eta'\in \Omega_{\bar\rho^{(L)}}}\pi_L(\eta)\mathcal L_{\eta \eta'}\left[
\sqrt{\frac{r(\eta)}{\pi_L(\eta)}}-\sqrt{\frac{r(\eta')}{\pi_L(\eta')}}
  \right]^2\\
=\frac1{2L^2}\sum_{\eta\ne\eta'\in \Omega_{\bar\rho^{(L)}}}\mathcal L_{\eta \eta'}\left[
\sqrt{r(\eta)}-\sqrt{r(\eta')}
  \right]^2,
\end{eqnarray}
using the inequality
\begin{eqnarray}
  2(\sqrt u-\sqrt v)^2\le (u-v)(\log u-\log v), \; u,v>0
\end{eqnarray}
(just write $\sqrt u-\sqrt v=\int_v^u dt/(2\sqrt t)$ and use Cauchy-Schwarz)
we deduce:
\begin{Lemma}\label{le:prod_tore} Letting  $\mathcal{C} = \mathcal{C}(\bar \rho) < \infty$ denote the same constant as in \eqref{eq:bdHL}, we have 
\begin{eqnarray}
\label{eq:ct}
   I_L(p_L)\le\frac {\mathcal C N}t.
\end{eqnarray}
\end{Lemma}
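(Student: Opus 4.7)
The plan is to observe that the lemma is a direct corollary of \eqref{eq:fs1} once one shows the pointwise bound $2I_L(r)\le \tilde I_L(r)$ for every probability measure $r$ on $\Omega_{\bar\rho^{(L)}}$. The main preliminary move is to rewrite $I_L(r)$ so that it is an average against the same positive measure $\pi_L(\eta)\mathcal L_{\eta\eta'}$ that appears in the definition of $\tilde I_L(r)$. This is allowed because $\pi_L$ is uniform on $\Omega_{\bar\rho^{(L)}}$, so $\pi_L(\eta)=\pi_L(\eta')$ for any pair $\eta,\eta'$ connected by the dynamics, and one may factor $\pi_L(\eta)$ out of each square bracket in \eqref{eq:were} to obtain
\begin{equation*}
I_L(r)=\frac1{2L^2}\sum_{\eta\neq\eta'\in\Omega_{\bar\rho^{(L)}}}\pi_L(\eta)\mathcal L_{\eta\eta'}\left[\sqrt{\frac{r(\eta)}{\pi_L(\eta)}}-\sqrt{\frac{r(\eta')}{\pi_L(\eta')}}\right]^2.
\end{equation*}

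Next, for each summand set $u=r(\eta)/\pi_L(\eta)$ and $v=r(\eta')/\pi_L(\eta')$. The elementary inequality $2(\sqrt u-\sqrt v)^2\le (u-v)(\log u-\log v)$ stated just before the lemma (and whose one-line proof is recalled there: write $\sqrt u-\sqrt v=\int_v^u dt/(2\sqrt t)$ and apply Cauchy--Schwarz to the pair $(1,1/(2\sqrt t))$) applied term by term yields immediately
\begin{equation*}
2I_L(r)\le \tilde I_L(r),
\end{equation*}
valid for any probability distribution $r$ on $\Omega_{\bar\rho^{(L)}}$.

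Specializing to $r=p_L=p_{L,t,(i,j)}$ and combining with the chain \eqref{eq:fs1}, i.e. $\tilde I_L(p_L)\le \mathcal C N/t$ with the same constant $\mathcal C=\mathcal C(\bar\rho)$ as in \eqref{eq:bdHL}, gives $I_L(p_L)\le \mathcal C N/(2t)\le \mathcal C N/t$, which is exactly the stated bound. There is no real obstacle here: convexity of $\tilde I_L$ and translation invariance of both $\pi_L$ and the jump rates were the substantive ingredients in deriving \eqref{eq:fs1}, and the passage from $\tilde I_L$ to $I_L$ is a purely analytic comparison.
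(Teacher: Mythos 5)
Your proposal is correct and follows exactly the paper's own argument: rewrite $I_L$ in the form involving $r/\pi_L$ (which is trivial since $\pi_L$ is uniform), apply the elementary inequality $2(\sqrt u-\sqrt v)^2\le(u-v)(\log u-\log v)$ term by term to obtain $2I_L\le \tilde I_L$, and then invoke the chain \eqref{eq:fs1} bounding $\tilde I_L(p_L)$ by $\mathcal C N/t$. The paper leaves these steps implicit (stating the inequality just before the lemma and writing ``we deduce''), but the content and route are identical to yours; you also correctly observe that one actually gets the slightly stronger bound $\mathcal C N/(2t)$.
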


Lemma \ref{le:prod_tore} states that the total entropy production per unit time of $p_L$ is bounded independently of $L$. The next step is to use the fact that this is an extensive quantity to deduce that the entropy production of the limit measure $\nu$ in any finite window is $0$.

Given a finite subset $\Lambda\subset \mathcal E$ of edges of
$\mathcal H$, let $\partial\Lambda$ denote the set of edges in
$\mathcal E\setminus \Lambda$ that are incident to at least one edge in
$\Lambda$. Given a dimer configuration $\eta$, we denote
$\eta_\Lambda$ and $\eta_{\partial \Lambda}$ its restriction to
$\Lambda$ and $\partial \Lambda$, respectively and call
$\Omega(\eta_{\partial\Lambda})$ the set of configurations
$\eta_\Lambda$ compatible with $\eta_{\partial\Lambda}$.  Let
$\mathcal L^{\eta_{\partial\Lambda}}$ be the generator of the
restricted dynamics where only updates that do not move dimers on
edges in $\mathcal E\setminus \Lambda$ are allowed.
\begin{Remark}
As remarked in Section \ref{sec:thedyn}, any particle jump by $\pm m \hat e_3$ can be seen as the concatenation of 
$m$ elementary rotations,  in $m$ vertically stacked adjacent hexagonal faces of $\mathcal H_L$, of three dimers. Then, more explicitly, allowed moves of the restricted dynamics with generator $\mathcal L^{\eta_{\partial\Lambda}}$ are only those particle jumps such that none of the $m$ elementary dimer rotations changes the dimer occupation variables at edges outside $\Lambda$.
\end{Remark}
 Of course,
$\mathcal L^{\eta_{\partial\Lambda}}_{\eta,\eta'}$ is non-zero only if
$\eta$ and $\eta'$ coincide outside of $\Lambda$. With some
abuse of notation we will sometimes write
$\mathcal L^{\eta_{\partial\Lambda}}_{\eta_\Lambda,\eta'_\Lambda}$
instead of $\mathcal L^{\eta_{\partial\Lambda}}_{\eta,\eta'}$. In
fact, the matrix elements of the generator are independent of
$\eta_{(\Lambda\cup\partial\Lambda)^c}$ and
$\eta'_{(\Lambda\cup\partial\Lambda)^c}$.  
Given a probability measure $r$ on
$\Omega$, define
\begin{eqnarray}
  \label{eq:Ilambda}
  I_\Lambda(r)=\frac1{2L^2}\int_\Omega\pi_{\bar \rho}(d\eta)\sum_{\eta'\in\Omega} \mathcal L^{\eta_{\partial\Lambda}}_{\eta_\Lambda,\eta'_\Lambda}\left(
  \sqrt{\frac{r(\eta_\Lambda,\eta_{\partial\Lambda})}{\pi_{\bar \rho}(\eta_\Lambda,\eta_{\partial\Lambda})}}-\sqrt{\frac{r(\eta'_\Lambda,\eta_{\partial\Lambda})}{\pi_{\bar \rho}(\eta'_\Lambda,\eta_{\partial\Lambda})}}
  \right)^2
\end{eqnarray}
where
$r(\eta_\Lambda,\eta_{\partial\Lambda})$ is the probability under $r$ that the dimer configuration restricted to $\Lambda$ and $\partial\Lambda$ is $\eta_\Lambda,\eta_{\partial\Lambda}$, respectively, 
and similarly for
$\pi_{\bar \rho}(\eta_\Lambda,\eta_{\partial\Lambda})$. Using
 the symmetry
$\mathcal
L^{\eta_{\partial\Lambda}}_{\eta_\Lambda,\eta'_\Lambda}=\mathcal
L^{\eta_{\partial\Lambda}}_{\eta'_\Lambda,\eta_\Lambda}$ and 
in particular that
$\pi_{\bar \rho}(\eta_\Lambda,\eta_{\partial\Lambda})=\pi_{\bar \rho}(\eta'_\Lambda,\eta_{\partial\Lambda})$
whenever $\eta$ and $\eta'$ are obtained one from the other via an
update of the restricted dynamics, one can rewrite $I_\Lambda(r)$ as
the finite sum
\begin{eqnarray}
  \label{eq:Ilambda2}
  I_\Lambda(r)=\frac1{2L^2} \sum_{\eta_{\partial\Lambda}}\sum_{\eta_\Lambda,\eta'_\Lambda\in\Omega(\eta_{\partial \Lambda})}\mathcal
L^{\eta_{\partial\Lambda}}_{\eta_\Lambda,\eta'_\Lambda}\left( \sqrt{r(\eta_\Lambda,\eta_{\partial\Lambda})}-\sqrt{r(\eta'_\Lambda,\eta_{\partial\Lambda})}
  \right)^2.
\end{eqnarray}

\begin{Lemma}\label{le:prod_locale}
Let $\nu$ be any limit point of $\{p_L\}_{L\ge1}$. Then, for any finite $\Lambda$,
\begin{equation}\label{eq:prod_locale}
I_\Lambda(\nu) = 0.
\end{equation}
\end{Lemma}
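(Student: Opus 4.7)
\textbf{Proof proposal for Lemma \ref{le:prod_locale}.} The plan is to show that $I_\Lambda(p_L) = O(N^3|\Lambda|/(tL^2))$ for every fixed $N$ and every finite $\Lambda$, and then to conclude by continuity. The functional $r \mapsto I_\Lambda(r)$ is continuous in the weak topology on $\mathcal P(\Omega)$: from the finite-sum formula \eqref{eq:Ilambda2}, the cylinder marginals $r(\eta_\Lambda,\eta_{\partial\Lambda})$ are continuous functions of $r$ and so is $\sqrt{\cdot}$. Hence, along the subsequence $\{L_m\}$ defining $\nu$, one obtains $I_\Lambda(\nu) = \lim_{m\to\infty} I_\Lambda(p_{L_m}) = 0$, and since $I_\Lambda \geq 0$ the claim follows.

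To bound $I_\Lambda(p_L)$, I combine three ingredients. First, $I_\Lambda$ is convex in $r$: this follows from joint convexity of $(u,v)\mapsto (\sqrt{u}-\sqrt{v})^2$ (whose Hessian is readily checked to be positive semi-definite) composed with the linear marginalization map. Applied to the representation
\begin{equation*}
p_L = \frac{1}{|B_j||I_i|}\sum_{u\in B_j}\int_{I_i}(\tau_{-u})_* r_{L,s}\,\mathrm{d}s,
\end{equation*}
together with translation invariance of the dynamics and of $\pi_{\bar\rho}$ (which give $I_\Lambda((\tau_{-u})_* r)=I_{\tau_u\Lambda}(r)$), this yields
\begin{equation*}
I_\Lambda(p_L) \;\leq\; \frac{1}{|B_j||I_i|}\int_{I_i}\sum_{u\in B_j} I_{\tau_u\Lambda}(r_{L,s})\,\mathrm{d}s.
\end{equation*}
Second, the proof of Lemma \ref{le:prod_tore} actually establishes, as an intermediate step before its final convexity inequality, the stronger bound $\frac{1}{|I_i|}\int_{I_i} I_L(r_{L,s})\,\mathrm{d}s \leq \mathcal{C}N/t$, using the entropy production identity \eqref{eq:dentr} together with $H_L \leq \mathcal{C}L^2$.

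The key remaining step is the covering estimate $\sum_{u\in B_j} I_{\tau_u\Lambda}(r) \leq |\Lambda|\,I_L(r)$. For fixed $u$, the Hellinger-monotonicity inequality $(\sqrt{\sum_\alpha p_\alpha}-\sqrt{\sum_\alpha q_\alpha})^2 \leq \sum_\alpha(\sqrt{p_\alpha}-\sqrt{q_\alpha})^2$ (immediate from Cauchy--Schwarz applied to the vectors $(\sqrt{p_\alpha})_\alpha$ and $(\sqrt{q_\alpha})_\alpha$) replaces the marginalized Hellinger term in $I_{\tau_u\Lambda}(r)$ by the un-marginalized one, and the restricted rate $\mathcal{L}^{\eta_{\partial\tau_u\Lambda}}_{\eta,\eta'}$ coincides with the full rate $\mathcal{L}_{\eta,\eta'}$ for every move whose support lies inside $\tau_u\Lambda$ (and vanishes otherwise); this yields
\begin{equation*}
I_{\tau_u\Lambda}(r)\;\leq\;\frac{1}{2L^2}\sum_{\eta,\eta':\,\mathrm{supp}\subset\tau_u\Lambda}\mathcal{L}_{\eta,\eta'}\bigl(\sqrt{r(\eta)}-\sqrt{r(\eta')}\bigr)^2.
\end{equation*}
Summing over $u\in B_j$, each full-dynamics move with support $R$ is counted at most $|\{u:R\subset\tau_u\Lambda\}|\leq|\Lambda|$ times, so the claimed bound follows. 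Putting everything together, $I_\Lambda(p_L)\leq\frac{|\Lambda|}{|B_j|}\cdot\frac{\mathcal{C}N}{t}=O(N^3|\Lambda|/(tL^2))\to 0$, concluding the argument. The main subtle point to verify carefully is this covering estimate, i.e.\ the simultaneous use of Hellinger monotonicity (to pass from marginals to the full measure $r$) and of the finite-multiplicity count of full-dynamics moves fitting into translates of $\Lambda$.
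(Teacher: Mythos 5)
Your proof is correct, and it reaches the same conclusion as the paper via the same core ingredients (the Hellinger/Cauchy--Schwarz monotonicity, the finite-multiplicity count of translated windows, and the uniform entropy-production bound from Lemma~\ref{le:prod_tore}), but organized along a genuinely different route. The paper shows only the uniform bound $I_\Lambda(p_L)\le \mathcal{C}N/t$, averages over an auxiliary window $V_n=\{1,\dots,n\}^2$ of fixed size independent of $L$, passes to the limit $\nu$, and then crucially invokes the translation invariance of $\nu$ (Lemma~\ref{le:translation_invariance}) to write $I_\Lambda(\nu)=\frac1{n^2}\sum_{v\in V_n}I_{\Lambda^v}(\nu)\le f(\Lambda)\mathcal{C}N/(tn^2)$, finally letting $n\to\infty$. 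You instead exploit the spatial averaging over $B_j$ that is already built into the definition \eqref{eq:pL} of $p_L$: convexity of $I_\Lambda$ (which you justify via joint convexity of $(u,v)\mapsto(\sqrt u-\sqrt v)^2$ composed with linear marginalization) lets you push $I_\Lambda$ through that average, and the covering estimate $\sum_{u\in B_j}I_{\tau_u\Lambda}(r)\le|\Lambda|\,I_L(r)$ then yields $I_\Lambda(p_L)=O(|\Lambda|N^3/(tL^2))\to 0$ directly, so that weak continuity of $I_\Lambda$ finishes the job. Your approach avoids invoking translation invariance of the limit measure inside this particular lemma (though that property is of course still needed later in the paper), and produces a vanishing rather than merely uniform bound on $I_\Lambda(p_L)$; the price is an explicit appeal to convexity of $I_\Lambda$, which the paper only uses for $\tilde I_L$ in Lemma~\ref{le:prod_tore}. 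Both arguments are sound.
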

\begin{proof}
In Eq. \eqref{eq:Ilambda2} let us take in particular   $r=p_L,$ that  is concentrated on the $L$-periodic dimer configurations in $\Omega_{\bar\rho^{(L)}}$. 
In this case, 
using the inequality 
\[
\left(\sqrt{\sum_i a_i}-\sqrt{\sum_i  b_i}\right)^2\le \sum_i (\sqrt{a_i}-\sqrt{b_i})^2, \quad 
a_i,b_i\ge0,
\]
(that is just  Cauchy-Schwarz) we see that 
\begin{eqnarray}
  \label{eq:CS}
   I_\Lambda(p_L)\le 
\frac1{2L^2}
\sum_{\eta,\eta'\in \Omega_{\bar\rho^{(L)}}}\mathcal
L^{\eta_{\partial\Lambda}}_{\eta_\Lambda,\eta'_\Lambda}\left( \sqrt{p_L(\eta)}-\sqrt{p_L(\eta')}
  \right)^2.
\end{eqnarray}
To prove the claim of the Lemma, let $V_n=\{1,\dots,n\}^2$ and for
$v\in V_n$ let $\Lambda^v:=\tau_v\Lambda$ denote the $v-$translation of
$\Lambda$.
We write
\begin{multline}
  \frac 1{n^2}\sum_{v\in V_n}I_{\Lambda^v}(p_L)\le \frac1{n^2}\frac1{2L^2}\sum_{\eta,\eta'\in \Omega_{\bar\rho^{(L)}}}\sum_{v\in V_n}
\mathcal
L^{\eta_{\partial\Lambda^v}}_{\eta,\eta'}\left( \sqrt{p_L(\eta)}-\sqrt{p_L(\eta')}\right)^2\\
\le \frac{f(\Lambda)}{n^2}\frac1{2L^2}\sum_{\eta,\eta'\in \Omega_{\bar\rho^{(L)}}}\mathcal
L_{\eta,\eta'}\left( \sqrt{p_L(\eta)}-\sqrt{p_L(\eta')}\right)^2\\=\frac{f(\Lambda)}{n^2} I_L(p_L)\le  \frac{f(\Lambda)}{n^2} 
\frac{\mathcal CN}t.
\end{multline}
The first inequality is just \eqref{eq:CS}; the second is obtained by
remarking that the set of transitions $\eta\mapsto \eta'$ allowed by
$\mathcal L^{\eta_{\partial\Lambda^v}}$ for some $v\in V_n$ is
contained in the set of transitions allowed by the full generator
$\mathcal L$.  When taking the sum over $v\in V_n$ some transitions
may be counted more than once (because the sets $\Lambda^v$ are not
disjoint) but this multiplicity is bounded by some finite $f(\Lambda)$
independent of $n$. In the third line, we recognized the definition
\eqref{eq:were} of $I_L(p_L)$ and then we used inequality
\eqref{eq:ct} in the last step.

From \eqref{eq:Ilambda2} we see that $r\mapsto I_\Lambda(r)$ is
continuous and since by assumption $p_{L} $ converges  weakly (along some sub-sequence $\{L_m\}_{m\ge1}$) to $\nu$ that is
translation invariant, we conclude that
\begin{equation}
  I_\Lambda(\nu)= \frac 1{n^2}\sum_{v\in V_n}I_{\Lambda^v}(\nu)=\frac 1{n^2}\lim_{m\to\infty} \sum_{v\in V_n}I_{\Lambda^v}(p_{L_m})\le \frac{f(\Lambda)}{n^2} 
\frac{\mathcal CN}t
\end{equation}
so that, taking $n\to\infty$, we conclude that $I_\Lambda(\nu)=0$ for every finite $\Lambda$.
\end{proof}

\subsubsection{Gibbs property}
\label{sup:gibbs}
The next step in the proof of Theorem \ref{th:undec2} is the following:
\begin{Lemma}\label{le:claiGibbs} 
Let $\nu$ be a probability measure on $\Omega$ such that $I_\Lambda(\nu) = 0$ for every finite $\Lambda$. Then $\nu$ is a Gibbs measure.
\end{Lemma}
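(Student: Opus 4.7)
The plan is to unpack the vanishing entropy production into equality of $\nu$-weights along orbits of the restricted dynamics, and then to use classical connectivity of dimer coverings to conclude that these orbits exhaust all compatible configurations, which is precisely the DLR condition.

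From the representation \eqref{eq:Ilambda2}, $I_\Lambda(\nu)$ is a finite sum of nonnegative quantities
\[
  \mathcal{L}^{\eta_{\partial\Lambda}}_{\eta_\Lambda, \eta'_\Lambda}\Bigl(\sqrt{\nu(\eta_\Lambda, \eta_{\partial\Lambda})} - \sqrt{\nu(\eta'_\Lambda, \eta_{\partial\Lambda})}\Bigr)^2,
\]
so $I_\Lambda(\nu) = 0$ forces $\nu(\eta_\Lambda, \eta_{\partial\Lambda}) = \nu(\eta'_\Lambda, \eta_{\partial\Lambda})$ whenever the restricted rate is positive. Iterating along chains of allowed moves, the function $\eta_\Lambda \mapsto \nu(\eta_\Lambda, \eta_{\partial\Lambda})$ is constant on each orbit of the restricted dynamics inside $\Omega(\eta_{\partial\Lambda})$, so that the conditional measure $\nu(\cdot \mid \eta_{\partial\Lambda})$ is uniform on each such orbit.

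I would first establish the Gibbs property for $\Lambda$ of the following ``nice'' form: $\Lambda$ is the set of all edges bordering a finite, simply connected union $R$ of hexagonal faces, so that every face of $R$ has its three surrounding edges entirely in $\Lambda$. For such $\Lambda$, the restricted generator $\mathcal{L}^{\eta_{\partial\Lambda}}$ allows every elementary length-one particle rotation centered at a face of $R$, since the three dimers involved in such a rotation live entirely on edges of $\Lambda$. The classical connectivity result for dimer coverings of a finite simply connected subgraph of $\mathcal{H}$ with fixed boundary condition (the same argument invoked for ergodicity at the end of Section~\ref{sec:thedyn} and attributed to \cite{LRS}) guarantees that any two coverings in $\Omega(\eta_{\partial\Lambda})$ are connected by a sequence of such elementary rotations. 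Consequently the restricted dynamics is irreducible on $\Omega(\eta_{\partial\Lambda})$, and the uniformity established above becomes uniformity on the whole of $\Omega(\eta_{\partial\Lambda})$, which is the DLR condition at such $\Lambda$.

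For an arbitrary finite $\Lambda \subset \mathcal{E}$, I would enclose it in a nice $\tilde\Lambda \supset \Lambda$ and apply the previous step to $\tilde\Lambda$, obtaining uniformity of $\nu(\cdot \mid \eta_{\partial\tilde\Lambda})$ on $\Omega(\eta_{\partial\tilde\Lambda})$. Conditioning this uniform measure further on the dimer values on $\tilde\Lambda \setminus \Lambda$ produces a uniform measure on the configurations in $\Lambda$ compatible with the extended conditioning; since this compatibility depends only on $\eta_{\partial\Lambda}$, one obtains exactly the Gibbs specification at $\Lambda$. The substantive step in the whole plan is the invocation of dimer connectivity by elementary rotations for simply connected regions, which is nontrivial but well-known and already implicitly used in the paper; everything else is either the sum-of-squares structure of $I_\Lambda$ or the standard consistency of Gibbs specifications under restriction to sub-regions.
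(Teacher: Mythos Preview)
Your approach is essentially the paper's: identify a class of ``nice'' regions (the paper uses the rhombi $\Lambda_n$, you use simply connected unions of faces) where the restricted dynamics is irreducible, deduce uniformity there from $I_\Lambda(\nu)=0$, and then pass to general $\Lambda$ by restriction. The connectivity argument you cite is exactly what the paper proves as Claim~\ref{claim:ergodico}.

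There is, however, a genuine gap in your final step. From \eqref{eq:Ilambda2} you correctly obtain that $\nu(\eta_\Lambda,\eta_{\partial\Lambda})$ is constant on orbits, hence that the \emph{marginal} $\nu(\,\cdot\mid\eta_{\partial\Lambda})$ is uniform on $\Omega(\eta_{\partial\Lambda})$ for nice $\Lambda$. But this is conditioning only on the finite set $\partial\Lambda$, whereas the DLR condition requires uniformity of $\nu(\,\cdot\mid\eta_{\mathcal E\setminus\Lambda})$, i.e.\ conditioning on the \emph{entire} exterior. These are not the same: further conditioning on $\eta|_{\mathcal E\setminus(\Lambda\cup\partial\Lambda)}$ could in principle destroy uniformity. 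Your embedding into a single fixed $\tilde\Lambda$ does not resolve this, since you still condition only on the finite set $(\tilde\Lambda\setminus\Lambda)\cup\partial\tilde\Lambda$. The paper closes this gap by applying the uniformity to an exhausting sequence $\Lambda_m\nearrow\mathcal E$ and taking $m\to\infty$ (a martingale-convergence type step), thereby turning finite conditioning into the infinite conditioning required by DLR. You need the same exhaustion; once you insert it, your argument is complete and matches the paper's.
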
   

\begin{proof}
 Recall that, by definition, the statement that $\nu$ is a Gibbs measure means
 that, for every finite $\Lambda$ and for $\nu$-almost every
 $\eta_{\mathcal E\setminus \Lambda}$, the measure
 $\nu(\cdot|\eta_{\mathcal E\setminus \Lambda})$ is uniform on
 $\Omega(\eta_{\partial\Lambda})$.  If the restricted dynamics with
 generator $\mathcal L^{\eta_{\partial \Lambda}}$ were ergodic for
 every $\Lambda$ and $\eta_{\partial\Lambda}$ then from the vanishing of
 \eqref{eq:Ilambda2} we would deduce that
 $\nu(\eta_\Lambda,\eta_{\partial\Lambda})$ is constant
 w.r.t. $\eta_\Lambda\in\Omega(\eta_{\partial\Lambda})$ and the claim would
 be easy to conclude. However, ergodicity may fail to be satisfied
 for some choice of $\Lambda,\eta_{\partial\Lambda}$, see e.g. Fig. \ref{fig:nonergodico},
 so the argument requires some care.

\begin{figure}[h]
  \includegraphics[width=9cm]{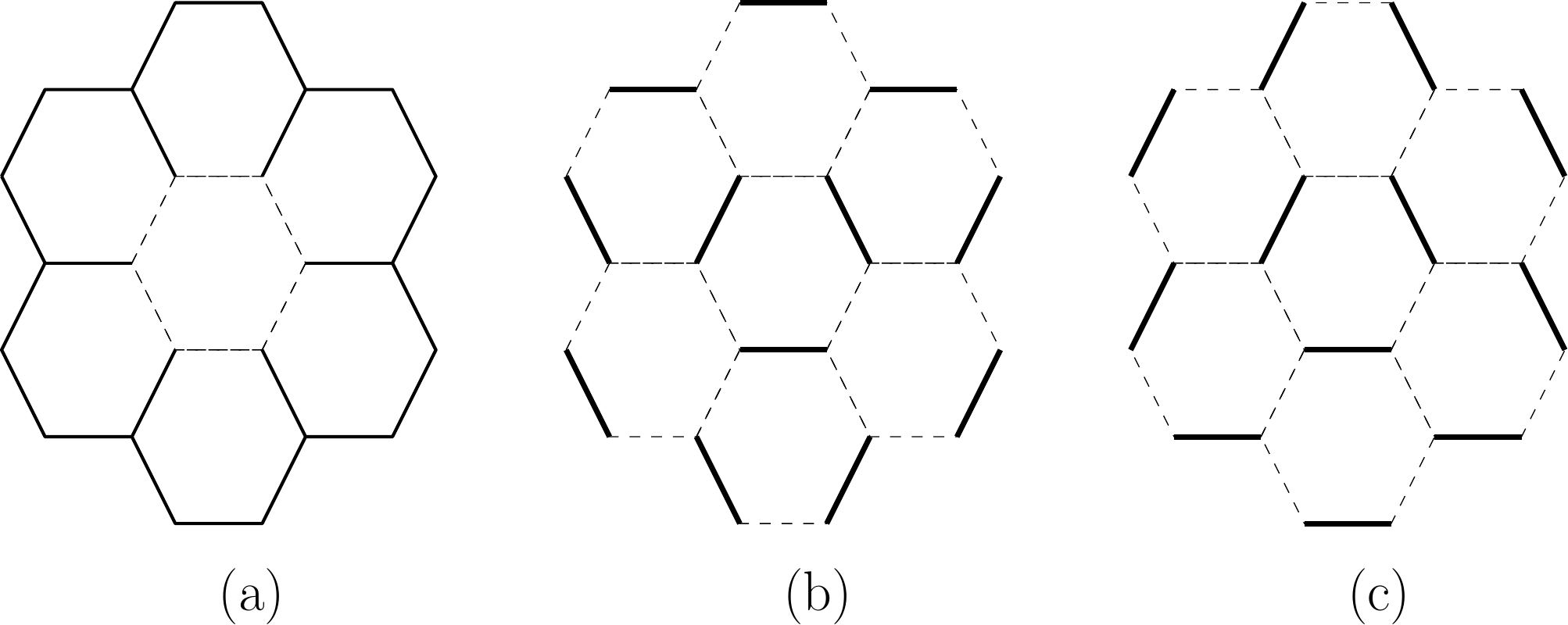}
  \caption{(a) the domain $\Lambda$ (dashed edges are not
    included). Configurations (b) and (c) coincide outside $\Lambda$
    but it is not possible to go from one to the other without moving
    dimers on the sides of the central hexagon (which are not in $\Lambda$).}
\label{fig:nonergodico}
\end{figure}

To circumvent this difficulty, we observe first of all the following
(see proof below):
\begin{Claim}
  \label{claim:ergodico}
  Let $\Lambda_n$ be the set of edges of the collection of hexagonal faces of $\mathcal H$ with label $u=(u_1,u_2), -n\le u_i\le n$. For every configuration $\eta\in\Omega$, the restricted dynamics in $\Lambda_n$,
  with generator $\mathcal L^{\eta_{\partial\Lambda_n}}$, is
  ergodic. In particular, from the assumption $I_{\Lambda_n}(\nu) = 0$ and \eqref{eq:Ilambda2} 
  we see that the marginal of $\nu(\cdot|\eta_{\partial\Lambda_n})$ on $\eta_{\Lambda_n}$  is
  uniform on $\Omega(\eta_{\partial\Lambda_n})$.
\end{Claim}
(The important point is that $\Lambda_n$ covers the whole of $\mathcal H$ as $n\to\infty$ and that, in contrast with the domain $\Lambda$ in Fig. \ref{fig:nonergodico},   it contains all the edges of all the faces in its interior).

Given Claim \ref{claim:ergodico},  we have also that for $m>n$, 
the marginal on $\eta_{\Lambda_n}$ of  $\nu(\cdot|\eta_{(\Lambda_m\setminus\Lambda_n)\cup \partial \Lambda_m})$   is uniform  on  $\Omega(\partial\Lambda_n)$,
simply because $\Lambda_m\supset\Lambda_n$ and conditioning the uniform measure
$\nu(\cdot|\eta_{\partial\Lambda_m})$ on
$\Omega(\eta_{\partial \Lambda_m})$ to the value of $\eta$ on
$(\Lambda_m\setminus \Lambda_n)$ gives
again a uniform measure. Taking the limit $m\to\infty$ implies that
the measure
\[
\nu(\cdot|\eta_{\mathcal E\setminus\Lambda_n})
\]
is uniform on $\Omega(\eta_{\partial\Lambda_n})$.

Given a general finite set of edges $\Lambda$, take $n$ large enough so that $\Lambda\subset\Lambda_n$ and 
write 
\begin{eqnarray}
  \nu(\cdot|\eta_{\mathcal E\setminus\Lambda})=\tilde\nu_n^\eta(\cdot|\eta_{\Lambda_n\setminus \Lambda}), 
\;\tilde \nu_n^\eta(\cdot):=\nu(\cdot|\eta_{\mathcal E\setminus \Lambda_n}).
\end{eqnarray}
Uniformity of $\tilde\nu_n^\eta$ on $\Omega(\eta_{\partial\Lambda_n})$ implies uniformity of $\nu(\cdot|\eta_{\mathcal E\setminus\Lambda})$ on $\Omega(\eta_{\partial \Lambda})$. 
\end{proof}

\begin{proof}[Proof of Claim \ref{claim:ergodico}]
  Let $\eta\ne\eta'$ be two configurations in
  $\Omega(\eta_{\partial\Lambda_n})$: their height functions $H_\eta,H_{\eta'}$
  coincide on the faces outside $\Lambda_n$ and differ on
  at least a face $u$ in $\Lambda_n$ (say that
  $H_\eta(u)<H_{\eta'}(u)$). Starting from $u$, consider a
  nearest-neighbor path $\mathcal C=\{u=u_0,u_1,\dots\}$ on 
  faces, that moves only in directions $+\hat e_1,+\hat e_2$ or
  $+\hat e_3$ and such that the edge crossed from $u_i$ to $u_{i+1}$
  is \emph{not} occupied by a dimer in configuration $\eta$. Observe
  that, from Definition \ref{def:height} of the height function, the
  height difference $H_{\eta'}-H_\eta$ is non-decreasing along the
  path. As a consequence, $\mathcal C$ cannot reach a face outside
  $\Lambda_n$, where the heights coincide. Also, $\mathcal C$ cannot
  form a closed loop $u_0,\dots, u_n=u_0$. In fact, since none of the
  edges crossed by the path being  occupied by dimers and the
  densities $\bar\rho_i^{(L)}$ are strictly positive, we would find
  from \eqref{eq:gradh} that $H_\eta(u_n)<H_\eta(u_0)=H_\eta(u_n)$. As a
  consequence, any such path $\mathcal C$ must stop at some face $u'$
  inside $\Lambda_n$: since $\mathcal C$ cannot be continued, we see
  that necessarily the three edges $b_1(u'),b_2(u'),b_3(u')$ are all
  occupied by a dimer in $\eta$. Then, we can rotate these three dimers around
  face $u'$, and the effect is that $H_\eta(u')$ increases by $+1$
  (the rotation is a legal move of the restricted dynamics, since all
  edges of $u$ are in $\Lambda_n$). The mutual volume
  $\sum_u|H_\eta(u)-H_{\eta'}(u)|$ decreases by $1$ because, as we
  remarked above,
  \[H_{\eta'}(u')-H_\eta(u')\ge H_{\eta'}(u)-H_\eta(u)>0.\] We
  iterate the procedure as long as there exist faces where
  $H_{\eta'}(u)\ne H_\eta(u)$. When the procedure stops and the mutual volume is zero,
  we have obtained a chain of legal moves that leads from $\eta$ to
  $\eta'$ and therefore the  dynamics is ergodic.
\end{proof}

Now the proof of the decomposition stated in Theorem \ref{th:undec2}
is just a matter of putting together Lemmas
\ref{le:translation_invariance}, \ref{le:prod_locale} and
\ref{le:claiGibbs} to see that $\nu$ is a translation invariant Gibbs
measure and then applying Theorem \ref{th:Georgii}.

\subsubsection{Support of $w_\nu$}\label{sub:sup}
Recall that we are writing for ease of notation $\nu:=\nu_{t,(i,j)}$.
We have proven  that \eqref{eq:ptolimite} holds and it remains to show that $w_\nu$ gives zero mass to the set
\begin{eqnarray}
  \label{eq:zm}
\{\lambda\in  {\rm ex} \mathcal G_{\mathcal T}:  \hat\rho_1(\lambda)>0,\hat\rho_2(\lambda)>0,\hat\rho_1(\lambda)+\hat\rho_2(\lambda)=1 \}.  
\end{eqnarray} 
In fact,
from Proposition \ref{prop:contrk} we know that
\begin{eqnarray}
 \int_{{\rm ex} \mathcal G_{\mathcal T}}w_\nu(d\lambda)\lambda(|\epsilon(0,\eta)|k(0,\eta))= \nu(|\epsilon(0,\eta)|k(0,\eta))\le N^3 K(t).
\end{eqnarray}
On the other hand, from point (3) of Proposition \ref{prop:kappa} we
know that \[\lambda(|\epsilon(0,\eta)|k(0,\eta))=+\infty\]
for every $\lambda\in {\rm ex}\, \mathcal G_{\mathcal T}$ satisfying
\eqref{eq:zm}. Then, the claim follows. 

\section{Conclusion of the proof of Theorem \ref{th:main}}
\label{sec:calcolo}

We have shown in the previous section that each limiting measure $\nu_{t,(i,j)}$ is a combination of Gibbs measures. This allows us to show that the r.h.s. of \eqref{eq:inco} is asymptotically smaller than  or equal to zero,
 i.e. the $\mathbb L^2$ distance between $\psi$ and $H/L$ stays small at all times:
\begin{Theorem}\label{th:IL}  For every $M>0,N\ge1,(i,j)\in\mathcal I_N$ and $t>0$ there exist real functions $r^{(i,j)}_{M,N}(t)$ and $g_N^{(i,j)}(t)$
satisfying 
\begin{eqnarray}
\label{eq:errpiccoli}
\sup_N\frac1{N^3}  \sum_{(i,j)\in\mathcal I_N} r^{(i,j)}_{M,N}(t)\stackrel {M\to\infty}\to0\\
\label{eq:erp2}
\limsup_{N\to\infty}\frac1{N^3}  \sum_{(i,j)\in\mathcal I_N} g_N^{(i,j)}(t)=0 
\end{eqnarray}
such that 
\begin{eqnarray}
\label{gdt}
 \nu_{t,(i,j)} (C^{(i,j)}(\eta)+U^{(i,j)}_M(\eta))\le g_N^{(i,j)}(t)+ r_{M,N}^{(i,j)}(t).
\end{eqnarray}
\end{Theorem}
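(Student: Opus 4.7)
The plan is to exploit the decomposition \eqref{eq:ptolimite} of $\nu:=\nu_{t,(i,j)}$ as a mixture of ergodic translation-invariant Gibbs measures and to compute the expectations term by term. Writing
$$\nu(C^{(i,j)}+U_M^{(i,j)})=\int_{{\rm ex}\,\mathcal G_{\mathcal T}}w_\nu(d\lambda)\,\bigl[\lambda(C^{(i,j)})+\lambda(U_M^{(i,j)})\bigr],$$
I would split the integration according to whether $\hat\rho(\lambda)\in\mathbb T$ (so $\lambda=\pi_{\hat\rho(\lambda)}$) or $\hat\rho(\lambda)\in\partial\mathbb T$ with $\min(\hat\rho_1(\lambda),\hat\rho_2(\lambda))=0$; by Theorem \ref{th:undec2} these are the only cases charged by $w_\nu$. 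In the degenerate boundary case, Proposition \ref{prop:kappa}(3) gives $\epsilon(0,\eta)=0$ almost surely, hence $U_M^{(i,j)}\equiv 0$, and only the bounded $C^{(i,j)}$ term contributes.

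For $\lambda=\pi_\rho$ with $\rho\in\mathbb T$, each of the four summands $C_1,\dots,C_4$ in Proposition \ref{prop:Ci} is a bounded local function of $\eta$: taking $\pi_\rho$-expectations reduces them to explicit functions of $\rho$ and $z^{(i,j)}$ via the one- and two-point correlations of the determinantal measure $\pi_\rho$ (for instance $\pi_\rho({\bf 1}_{b_i(0)\in\eta})=\rho_i$, while $\pi_\rho(|\epsilon(0,\eta)|)$ and $\pi_\rho(F(0,\eta))$ are similarly explicit). To handle $U_M^{(i,j)}$, I would use Proposition \ref{prop:kappa}(1), which gives $\pi_\rho(|\epsilon(0,\eta)|k(0,\eta))<\infty$ for $\rho\in\mathbb T$, together with dominated convergence to conclude $\pi_\rho(U_M^{(i,j)})\to\pi_\rho(U_\infty^{(i,j)})$. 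A quantitative tail estimate (exponential decay of the spacing between consecutive horizontal dimers in a column under $\pi_\rho$, cf. \cite[Lemma A.1]{T2+1}) combined with Proposition \ref{prop:contrk} bounds the $M$-truncation error $r_{M,N}^{(i,j)}(t)$ uniformly in $N$, yielding \eqref{eq:errpiccoli}. The outcome is a function $G(\rho,z^{(i,j)})$ such that
$$\nu(C^{(i,j)}+U_M^{(i,j)})\le \int w_\nu(d\lambda)\,G(\hat\rho(\lambda),z^{(i,j)})+r_{M,N}^{(i,j)}(t).$$

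The main obstacle, and the algebraic heart of the proof, is verifying that $G(\rho,z)\le 0$ for every $\rho\in\mathbb T$ and every $z$ with $z+\bar\rho\in\mathcal A$, \emph{independently} of the mixture $w_\nu$. I would establish this by writing $G(\rho,z)$ explicitly in terms of $\rho$, $z$, and the formulas \eqref{eq:4mu}, \eqref{eq:sigma} for $\mu$ and $\sigma$, then performing the summation by parts alluded to in \cite[Sec. 4]{LThydro} that cancels the ``dangerous'' space-time correlation contributions. Non-positivity would then reduce to a concrete trigonometric inequality among $\sin(\pi\rho_i)$, $\cos(\pi\rho_i)$ and the components of $z$, reflecting the $\mathbb L^2$-contraction property of the hydrodynamic flow emphasized in the introduction. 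Finally, the residual $g_N^{(i,j)}(t)$ collects the discretization and Riemann-sum errors coming from replacing $\hat\nabla\Psi(u,s)$ by the constant $z^{(i,j)}$ on $B_j\times I_i$ in \eqref{eq:appr2}--\eqref{eq:appr1} and the uniform-in-$L$ error terms from Proposition \ref{prop:primaderivata}; by the $C^{2,1}$-regularity of $\psi$ guaranteed by Theorem \ref{th:main}(i), these vanish in the Cesàro average once $N\to\infty$, establishing \eqref{eq:erp2}.
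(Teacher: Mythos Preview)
Your overall architecture is right—use the ergodic decomposition from Theorem \ref{th:undec2}, evaluate the observables on each $\pi_\rho$ via the determinantal structure, and control the $M$-truncation with Proposition \ref{prop:contrk}. But there is a genuine gap in how you split the outcome of that computation, and it is exactly the point that makes the proof work.

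You assert that the evaluation produces a function $G(\rho,z)$ with $G\le0$ pointwise, and that $g_N^{(i,j)}(t)$ merely collects the discretization errors from \eqref{eq:appr2}--\eqref{eq:appr1}. Neither statement is correct. The discretization errors $\epsilon_{M,N}$ were already absorbed in \eqref{eq:inco} and are not part of Theorem \ref{th:IL}. More importantly, when one carries out the $\pi_\rho$-computation (this is Proposition \ref{prop:ceccone} in the paper), the answer is not a single non-positive function: one obtains
\[
G(\rho,\bar\rho,z)+\tfrac12\bigl[-z\cdot\bar\rho+\bar\rho_1(\rho_1-\bar\rho_1)+\bar\rho_2(\rho_2-\bar\rho_2)\bigr],
\]
where $G(\rho,\bar\rho,z)=-2(W(z+\bar\rho)-W(\rho))\cdot(z+\bar\rho-\rho)$. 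Only the $G$ part is pointwise non-positive (this follows from positive-definiteness of the symmetrized Jacobian of $W$, not from a generic trigonometric inequality). The linear remainder is \emph{not} $\le0$ in general: since $G$ vanishes quadratically at $\rho=z+\bar\rho$ while the linear term has nonzero gradient $\tfrac12\bar\rho$ there, the sum takes both signs near that point. In the paper, $g_N^{(i,j)}(t)$ is precisely the $w_{\nu_{t,(i,j)}}$-integral of this linear remainder, and \eqref{eq:erp2} is obtained from two global identities that you have not invoked: periodicity of $\psi$ gives $\tfrac1{N^3}\sum_{(i,j)}z^{(i,j)}\to0$, and the exact conservation of dimer counts on $\Omega_{\bar\rho^{(L)}}$ gives $\tfrac1{N^3}\sum_{(i,j)}\nu_{t,(i,j)}({\bf 1}_{b_a(0)\in\eta}-\bar\rho_a)=0$. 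Without these conservation-law cancellations the bound \eqref{gdt} with \eqref{eq:erp2} cannot be closed, so this is the missing idea rather than a technicality.

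A smaller point: for \eqref{eq:errpiccoli} the exponential tail of spacings under a fixed $\pi_\rho$ is not uniform as $\rho\to\partial\mathbb T$, so it does not by itself control the mixture. The paper bypasses this by observing that $\tfrac1{N^3}\sum_{(i,j)}r_{M,N}^{(i,j)}(t)$ is actually an expectation under the $N$-independent measure $\mu_t=\tfrac1{N^3}\sum\nu_{t,(i,j)}$, for which \eqref{eq:nuK} gives integrability of $|\epsilon|k$ directly; dominated convergence then suffices.
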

As a consequence, plugging \eqref{gdt} into \eqref{eq:inco} and  taking first $N\to\infty$ and then $M\to\infty$ we deduce
\begin{eqnarray}
\label{eq:ducr}
  \limsup_{m\to\infty}D_{L_m}(t)=0.
\end{eqnarray}
Actually, our argument up to here yields that for every  sub-sequence
$\{\tilde L_k\}_{k\ge1}$ it is possible to extract a sub-sequence
$\{L_m\}_{m\ge1}\subset \{\tilde L_k\}_{k\ge 1}$ such that
\eqref{eq:ducr} holds.  Then, it follows immediately that
\eqref{eq:ducr} holds along \emph{any} sub-sequence increasing to $+\infty$, and therefore
claim (ii) of Theorem \ref{th:main} is proven.  The rest of the
present section is devoted to the proof of Theorem \ref{th:IL}.

\begin{proof}[Proof of Theorem \ref{th:IL}]

The main point of this proof is that we can explicitly compute (apart
from a few technicalities stemming from the cut-off $M$) the
expectation of the observables $C^{(i,j)}(\eta)$ and
$U^{(i,j)}_M(\eta)$ when $\eta$ is sampled from  an ergodic
Gibbs measure. From the decomposition of $\nu_{t,(i,j)}$ into ergodic
Gibbs measures provided by Theorem \ref{th:undec2}, this allows us to
write the l.h.s. of \eqref{gdt} as the integral  over ${\rm ex\,}\mathcal G_{\mathcal T}$ of the
density $w_{\nu_{t,(i,j)}}(d\lambda)$ times an explicit function of
$\lambda$ (the function in the r.h.s. of \eqref{eq:ceccone}, with $G$ defined in \eqref{eq:Gg}).  At that
point, a non-trivial algebraic identity implies that the integral is
 (asymptotically in the limit $\lim_{M\to\infty}\lim_{N\to\infty}$)
non-positive independently of the \emph{unknown} density $w_{\nu_{t,(i,j)}}$,
which concludes the proof. This point is related to the fact that the
limit PDE \eqref{eq:PDE} contracts the $\mathbb L^2$ distance between
solutions, a fact which was already pointed out in \cite{LThydro}.

\medskip

Using \eqref{450}, the l.h.s. of \eqref{gdt} equals
\begin{multline}
  \label{eq:pp}
  \nu_{t,(i,j)}\left[C^{(i,j)}(\eta)-
  (\bar\rho_3-z_1^{(i,j)}-z_2^{(i,j)})|\epsilon(0,\eta)|\frac{k(0,\eta)-1}2\right]\\
+\nu_{t,(i,j)}\left[  (\bar\rho_3-z_1^{(i,j)}-z_2^{(i,j)})|\epsilon(0,\eta)|\frac{k(0,\eta)-1}2{\bf 1}_{|\epsilon(0,\eta)|k(0,\eta)> M}\right]\\
\le \nu_{t,(i,j)}\left[C^{(i,j)}(\eta)-
  (\bar\rho_3-z_1^{(i,j)}-z_2^{(i,j)})|\epsilon(0,\eta)|\frac{k(0,\eta)-1}2\right]\\
+c\nu_{t,(i,j)}\left[ |\epsilon(0,\eta)|k(0,\eta){\bf 1}_{|\epsilon(0,\eta)|k(0,\eta)> M}\right]
\end{multline}
for some absolute constant $c<\infty$.
Letting
\begin{eqnarray}
  r_{M,N}^{(i,j)}(t):=c\nu_{t,(i,j)}\left[ |\epsilon(0,\eta)|k(0,\eta){\bf 1}_{|\epsilon(0,\eta)|k(0,\eta)> M}\right]
\end{eqnarray}
we see that 
\begin{eqnarray}
\label{eq:esva}
  \frac1{N^3}\sum_{(i,j)\in\mathcal I_N} r_{M,N}^{(i,j)}(t)=c \mu_t\left[|\epsilon(0,\eta)|k(0,\eta){\bf 1}_{|\epsilon(0,\eta)|k(0,\eta)> M}\right]
\end{eqnarray}
where
 \[
\mu_t(f(\eta)):=\frac1{N^3}\sum_{(i,j)\in\mathcal I_N}\nu_{t,(i,j)}(f(\eta)) =
 \lim_{m\to\infty}\frac1{t L_m^2}\sum_{u\in\mathcal T_{L_m}}\int_0^t ds \mathbb E (f(\eta_{-u}(s)))
.
\]
Note in particular that the r.h.s. of \eqref{eq:esva} is independent of $N$.
Then,  \eqref{eq:errpiccoli} follows from \eqref{eq:nuK} of 
 Proposition \ref{prop:contrk} and dominated convergence.
\smallskip

Next, we claim (this is proved at the end of the section):
\begin{Proposition} 
\label{prop:ceccone}
For every $\lambda$ in the support of $w_{\nu_{t,(i,j)}}(d\lambda)$,
\begin{multline}
  \label{eq:ceccone}
 \lambda\left[C^{(i,j)}(\eta)-\left(\bar\rho_3-z^{(i,j)}_1-z^{(i,j)}_2\right)|\epsilon(0,\eta)|
\frac{k(0,\eta)-1}2\right]\\=G(\hat\rho(\lambda),\bar\rho,z^{(i,j)})+\frac12\left[-z^{(i,j)}\cdot \bar\rho+\bar \rho_1(\hat\rho_1(\lambda)-\bar\rho_1)+\bar\rho_2(\hat\rho_2(\lambda)-\bar\rho_2)\right]
\end{multline}
where 
$G\le 0$ is an explicit function that is defined in Eqs. \eqref{eq:Gg} and \eqref{G2} below.
\end{Proposition}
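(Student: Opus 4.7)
The strategy is to recognise that the left-hand side of \eqref{eq:ceccone} is, after a tidy algebraic reorganisation, simply the $\lambda$-expectation of the sum $\sum_{a=1}^{4} C_a^{(i,j)}(\eta)$, where $C_a^{(i,j)}$ stands for the observable $C_a$ of Proposition~\ref{prop:Ci} after the substitution $\hat\nabla\Psi \mapsto z^{(i,j)}$ and, consistently, $\Psi(j\epsilon\hat e_3)-\Psi(0) \mapsto j\epsilon \hat e_3 \cdot z^{(i,j)}$. Indeed, by \eqref{eq:Fatou} one has $C = \sum_{a} C_a - U^{(L)}$, and plugging the substitution into $U^{(L)}$ yields, using $\hat e_3 = -\hat e_1 - \hat e_2$, $\epsilon^2 = |\epsilon|$ for $\epsilon \in \{-1,0,1\}$, and $\sum_{j=0}^{k-1} j = k(k-1)/2$,
\[
U^{(L),(i,j)} = -\bar\rho_3|\epsilon(0,\eta)|\frac{k(0,\eta)-1}{2} + |\epsilon(0,\eta)|(z_1^{(i,j)}+z_2^{(i,j)})\frac{k(0,\eta)-1}{2} = -(\bar\rho_3 - z_1^{(i,j)} - z_2^{(i,j)})|\epsilon(0,\eta)|\frac{k(0,\eta)-1}{2},
\]
which is precisely the correction subtracted on the left of \eqref{eq:ceccone}. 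Integrability of all terms that follow holds because $\lambda(|\epsilon(0,\eta)|k(0,\eta))<\infty$ on the support of $w_{\nu_{t,(i,j)}}$ (Proposition~\ref{prop:kappa} together with Theorem~\ref{th:undec2}).

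With this reduction, the second step is to evaluate $\lambda[\sum_a C_a^{(i,j)}]$ term by term. The unbounded $|\epsilon|(k-1)/2$ contribution depends only on a vertical strip of dimer occupations; its $\lambda$-expectation is an explicit function of $\hat\rho(\lambda)$, computable from the determinantal correlations of $\pi_{\hat\rho(\lambda)}$ given in \cite{KOS} whenever $\hat\rho(\lambda)\in\mathbb T$, and equal to $0$ in the degenerate boundary case $\min(\hat\rho_1,\hat\rho_2)=0$ (the other degenerate case $\hat\rho_1+\hat\rho_2=1$ with $\hat\rho_1\hat\rho_2>0$ being excluded by Theorem~\ref{th:undec2}). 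All remaining contributions are bounded, strictly local, and formed by $\mathbf{1}_{b_1(0)\in\eta}$, $\mathbf{1}_{b_2(0)\in\eta}$, $|\epsilon(0,\eta)|$, $F(0,\eta)$ together with the constants $\bar\rho$, $z^{(i,j)}$ and $W(z^{(i,j)}+\bar\rho)$ coming from $C_3^{(i,j)}$ and $C_4^{(i,j)}$. By translation invariance one has $\lambda(\mathbf{1}_{b_i(0)\in\eta})=\hat\rho_i(\lambda)$, and the identities \eqref{eq:Ff}--\eqref{eq:F} combined with the mutual exclusivity of the two events defining $\epsilon$ (noted just after \eqref{eq:eps}) reduce $\lambda(|\epsilon(0,\eta)|)$ and $\lambda(F(0,\eta))$ to the two-dimer correlations $\lambda(\{b_1(0),b_2(0)\}\subset\eta)$ and $\lambda(\{b_1(-\hat e_1),b_2(-\hat e_2)\}\subset\eta)$, again explicit in terms of the $\pi_{\hat\rho(\lambda)}$ kernel.

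The third step is algebraic assembly: sum the four contributions, extract the affine piece $\tfrac12[-z^{(i,j)}\cdot\bar\rho + \bar\rho_1(\hat\rho_1(\lambda)-\bar\rho_1) + \bar\rho_2(\hat\rho_2(\lambda)-\bar\rho_2)]$ (which arises from the $\bar\rho\cdot\mathbf{1}_{b_i(0)\in\eta}$ cross-terms in $C_1^{(i,j)}$ and from the linear-in-$\mathbf{1}_{b_i(0)\in\eta}$ part of $C_2^{(i,j)}$), and \emph{define} $G(\hat\rho(\lambda),\bar\rho,z^{(i,j)})$ as the remaining nonlinear expression, thereby matching the formula \eqref{eq:Gg}--\eqref{G2}. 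The bookkeeping just outlined is routine in principle; the real substantive obstacle, on which the whole hydrodynamic limit hinges, is the separately asserted inequality $G\le 0$. This is the ``non-trivial algebraic identity'' flagged in Section~\ref{sec:model}, reflecting for this model the $\mathbb L^2$-contractivity of the limiting PDE already highlighted in \cite{LThydro}, and I expect it to be established by exploiting the explicit form of $W$ together with the determinantal correlations of $\pi_\rho$.
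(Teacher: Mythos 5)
Your opening reorganization is correct: the subtracted term in the statement is exactly $-U^{(i,j)}$ without the $M$-cutoff, so the left-hand side of \eqref{eq:ceccone} is $\lambda\bigl[\sum_a C_a^{(i,j)}\bigr]$, and your computation of $U^{(L),(i,j)}$ using $\hat e_3=-\hat e_1-\hat e_2$, $\epsilon^2=|\epsilon|$ and $\sum_{j<k}j=k(k-1)/2$ matches \eqref{450}. The term-by-term plan (evaluate the $\lambda$-averages of $\mathbf 1_{b_1(0)\in\eta}$, $\mathbf 1_{b_2(0)\in\eta}$, $\mathbf 1_{\{b_1(0),b_2(0)\}\subset\eta}$, $|\epsilon(0,\eta)|$ and $|\epsilon(0,\eta)|(k(0,\eta)-1)$, splitting by whether $\hat\rho(\lambda)\in\mathbb T$ or $\min(\hat\rho_1,\hat\rho_2)=0$) is indeed the paper's. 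Two remarks, though. You describe $\lambda\bigl[|\epsilon(0,\eta)|\tfrac{k(0,\eta)-1}{2}\bigr]$ as ``computable from the determinantal correlations of $\pi_{\hat\rho}$'' and the assembly as ``routine in principle''; in fact \eqref{LT1} and \eqref{LT2} are the substantive input of the proof and rest on the non-trivial Chhita--Ferrari combinatorial identity \cite{CF}, as packaged in \cite[Prop.~14]{LThydro}, and you also need the reflection-invariance identity \eqref{eq:nnpp} to relate $\lambda(|\epsilon(0,\eta)|)$ to $\pi_\rho(\{b_1(0),b_2(0)\}\subset\eta)$.

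The larger gap is that you \emph{define} $G$ as the leftover nonlinear expression and then treat $G\le 0$ as a ``separately asserted inequality'' that you merely ``expect'' to hold. But $G\le 0$ is part of the conclusion of the proposition, so its proof is not optional here, and your guess about the mechanism is off: the paper does not use determinantal correlations to prove negativity of $G$. In Case 1 one identifies $G(\rho,\bar\rho,z)=-2(W(z+\bar\rho)-W(\rho))\cdot(z+\bar\rho-\rho)$ as in \eqref{eq:Gg}, reduces $G\le 0$ to the monotonicity $(W(a)-W(b))\cdot(a-b)\ge 0$ on $\mathbb T$, and establishes the latter by an elementary direct computation: the symmetrized Jacobian $\tilde H_W(\rho)$ has $\mathrm{Tr}\,\tilde H_W(\rho)=\tfrac12\tfrac{\sin^2(\pi\rho_1)+\sin^2(\pi\rho_2)}{\sin^2(\pi(\rho_1+\rho_2))}>0$ and $\det\tilde H_W(\rho)=\tfrac{\sin^2(\pi\rho_1)\sin^2(\pi\rho_2)}{4\sin^2(\pi(\rho_1+\rho_2))}>0$, hence is positive definite (\eqref{eq:25aa}--\eqref{eq:26aa}). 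Case 2 with $\min(\rho_1,\rho_2)=0$ is then handled by showing that the expression \eqref{G2} is the boundary limit of the Case 1 expression, so negativity carries over. Without this argument the proposition is not proven, so you should supply it.
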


We deduce that
\begin{multline}
  \nu_{t,(i,j)}(C^{(i,j)}(\eta)+U_M^{(i,j)}(\eta))\le r^{(i,j)}_{M,N}(t)
+g_N^{(i,j)}(t),
\\
g_N^{(i,j)}(t):=\frac12\int_{{\rm ex}\,\mathcal G_{\mathcal T}} w_{\nu_{t,(i,j)}}(d\lambda)\left[-z^{(i,j)}\cdot \bar\rho+\bar \rho_1(\hat\rho_1(\lambda)-\bar\rho_1)+\bar\rho_2(\hat\rho_2(\lambda)-\bar\rho_2)\right]
\end{multline}
and it remains to prove \eqref{eq:erp2} to conclude the proof of the Theorem.
First of all, using the definition \eqref{eq:zij} of $z^{(i,j)}$ and
the smoothness of  $\psi(u,t)$ in space and time,
\begin{eqnarray}\nonumber
  \frac1{N^3}\sum_{(i,j)\in\mathcal I_N}z^{(i,j)} := \frac1{N^3}\sum_{(i,j)\in\mathcal I_N}\nabla\psi(j/N,ti/N)=\frac1t \int_0^t \int_{[0,1]^2}\nabla\psi(u,s)+\epsilon_N:
\end{eqnarray}
 the integral is zero because $\psi$ is periodic on the torus while $\epsilon_N$ tends to zero as $N\to\infty$.
 
Similarly, for $a=1,2$,
\begin{multline}
\frac1{N^3}\sum_{(i,j)\in\mathcal I_N}    \int_{ {\rm ex}\, \mathcal G_{\mathcal T}}w_{\nu_{t,(i,j)}}(d\lambda) (\hat\rho_a(\lambda)-\bar\rho_a)=\frac1{N^3}\sum_{(i,j)\in\mathcal I_N}    \nu_{t,(i,j)}({\bf 1}_{b_a(0)\in\eta}-\bar\rho_a)\\
=\lim_{m\to\infty}\frac1{L_m^2}\frac1t \int_0^tds \left[\sum_{u\in\mathcal T_{L_m}} \mathbb P(b_a(u)\in\eta(s))-L_m^2\bar\rho^{(L_m)}_a\right]
\end{multline}
and the last expression is zero because any configuration
$\eta\in\Omega_{\bar\rho^{(L)}}$ contains exactly
$L^2 \bar\rho^{(L)}_a$ dimers of type $a$.
This concludes the proof of Theorem  \ref{th:IL}.
\end{proof}
\begin{proof}[Proof of Proposition \ref{prop:ceccone}]
Going back to the definition \eqref{eq:C1} of $C_1,\dots,C_4$ and to the definition of $C^{(i,j)}$ in Section 
\ref{sec:discretization}, we see that
\begin{multline}
  \label{eq:pallosa}
  C^{(i,j)}(\eta)=\frac{|\epsilon(0,\eta)|}2+\frac{\bar\rho_3}2F(0,\eta)
  \\-\frac1{2}\left(
  {\bf 1}_{b_1(0)\in\eta}(1-2\bar\rho_1)+\bar\rho_1^2+{\bf 1}_{b_2(0)\in\eta}(1-2\bar\rho_2)+\bar\rho_2^2
  \right)\\
  -
  \frac{1}2F(0,\eta)(z^{(i,j)}_1+z^{(i,j)}_2)
  +\frac1{2} \left(({\bf 1}_{b_1(0)\in\eta}-\bar\rho_1)z^{(i,j)}_1+({\bf 1}_{b_2(0)\in\eta}-\bar\rho_2)z^{(i,j)}_2
  \right)
  \\
  + 2W(z^{(i,j)}+\bar \rho)\cdot({\bf 1}_{b_1(0)\in\eta}-\bar\rho_1,{\bf 1}_{b_2(0)\in\eta}-\bar\rho_2)-
2z^{(i,j)}\cdot W(z^{(i,j)}+\bar \rho).
\end{multline}
Therefore, 
recalling also the definition \eqref{eq:Ff} of $F$, we see that
to compute the l.h.s. of \eqref{eq:ceccone}, we need to compute the average (w.r.t. $\lambda$) of 
the following functions:
\[
{\bf 1}_{b_1(0)\in\eta}, \;\; {\bf 1}_{b_2(0)\in\eta}, \;\; {\bf 1}_{\{b_1(0),b_2(0)\}\subset\eta}, \;\;|\epsilon(0,\eta)| \;\; \text{and}
\; \;|\epsilon(0,\eta)|(k(0,\eta)-1).
\]
Of course one has (by definition)
\begin{eqnarray}
\label{eq:ecomeno}
  \lambda(b_i(0)\in\eta)=\hat\rho_i(\lambda), i=1,2.
\end{eqnarray}
The remaining averages are not at all as trivial.

\emph{Case 1: $\lambda$ is such that
  $\rho:=\hat \rho(\lambda)\in\mathbb T$}. As we already mentioned,
there is a unique such translation invariant, ergodic Gibbs measure,
that we denote $\pi_\rho$. The  determinantal structure
\cite{KOS} of $\pi_\rho$ allows for several explicit
computations. Notably, the following identities hold: if
$\lambda=\pi_\rho$, then
  \begin{gather}
\label{LT1}
    \lambda(\{b_1(0),b_2(0)\}\subset\eta)=\rho_1\rho_2+(1-\rho_1-\rho_2)V(\rho)
\\
\label{LT2}
\lambda\left[|\epsilon(0,\eta)|\frac{k(0,\eta)-1}2\right]=-\rho_1\rho_2+(\rho_1+\rho_2)V(\rho),
  \end{gather}
where 
\begin{eqnarray}
  \label{eq:Vv}
  V(\rho)=\frac1\pi\frac{\sin(\pi\rho_1)\sin(\pi\rho_2)}{\sin(\pi(1-\rho_1-\rho_2))}.
\end{eqnarray}
Equations \eqref{LT1} and \eqref{LT2} are proven in
\cite[Prop. 14]{LThydro}, as a consequence of a combinatorial identity
discovered in \cite{CF}. The indicator ${\bf 1}_{X(0,0)}$ that appears
in \cite[Eq. (3.33), (3.34)]{LThydro} is nothing but
$|\epsilon(0,\eta)|$, while $n(b^{(0,0)})$ there is what we call $k(0,\eta)$
in the present work.

Moreover,
\begin{multline}
  \label{eq:nnpp}
  \lambda(|\epsilon(0,\eta)|)=\pi_\rho(\{b_1(0),b_2(0)\}\subset\eta)+\pi_\rho(\{b_1(-\hat
  e_1),b_2(-\hat
  e_2)\}\subset\eta)\\=2\pi_\rho(\{b_1(0),b_2(0)\}\subset\eta).
\end{multline}
The last equality follows from the fact that the measure $\pi_\rho$ is
invariant under reflection through the vertex $0$ (which maps $b_i(0)$
into $b_i(-\hat e_i),i=1,2$): this holds because the reflected measure is still Gibbs and
has the same dimer densities as $\pi_\rho$, and we mentioned that for
$\rho\in\mathbb T$ there is a unique ergodic Gibbs measure with
$\hat\rho(\lambda)=\rho$.  Given
Eqs. \eqref{eq:ecomeno}-\eqref{eq:nnpp}, it is not hard to check that the
l.h.s. of \eqref{eq:ceccone}  equals the
r.h.s. provided that
\begin{eqnarray}
  \label{eq:Gg}
G(\rho,\bar\rho,z)=-2(W(z+\bar\rho)-W(\rho))\cdot(z+\bar\rho-\rho).  
\end{eqnarray}
Now we claim  that
\begin{eqnarray}
  \label{eq:a-b}
   (W(a)-W(b))\cdot(a-b)\ge0 \;  \text{ for }  a,b\in  {\mathbb T}
\end{eqnarray}
and actually that equality holds only if $a=b$.
To prove \eqref{eq:a-b}, it is sufficient to prove that
\begin{eqnarray}
  \label{eq:giusta}
  v\cdot H_W(\rho)v\ge0 
\end{eqnarray}
for every $v\in \mathbb R^2$ and  for every $\rho\in  {\mathbb T}$, where $H_W(\rho)$ is 
the matrix
\begin{eqnarray}
  \label{eq:24aa}
  H_W(\rho):=\left(
  \begin{array}{cc}
    \partial_{\rho_1} W_1(\rho) & \partial_{\rho_2} W_1(\rho)\\
\partial_{\rho_1} W_2(\rho) & \partial_{\rho_2} W_2(\rho)
  \end{array}
\right).
\end{eqnarray}
In turn, this follows if the symmetric matrix $\tilde H_W(\rho)=1/2(H_W(\rho)+H_W(\rho)^T)$
is positive definite.
An explicit computation shows that 
the trace of $\tilde H_W$ is
\begin{eqnarray}
  \label{eq:25aa}
  {\rm Tr}(\tilde H_W(\rho))=\frac12 \frac{\sin^2(\pi \rho_1)+\sin^2(\pi \rho_2)}{\sin^2(\pi(\rho_1+\rho_2))} > 0
\end{eqnarray}
and
\begin{eqnarray}
  \label{eq:26aa}
  \det (\tilde H_W(\rho))=\frac{\sin^2(\pi \rho_1)\sin^2(\pi \rho_2)}{4\sin^2(\pi(\rho_1+\rho_2))} >0.
\end{eqnarray}
This concludes the proof of \eqref{eq:giusta} and therefore of the claim in the case $\rho:=\hat \rho(\lambda)\in\mathbb T$.

\emph{Case 2: $\lambda$ is such that $\rho=\hat\rho(\lambda)$ satisfies $\min(\rho_1,\rho_2)=0$}.  Assume  w.l.o.g. that
$\rho_2=0$.  In this case, as already observed in Proposition
\ref{prop:kappa}, $|\epsilon(0,\eta)|$ is $\lambda$-almost surely zero
and the l.h.s. of \eqref{LT1}, \eqref{LT2} and \eqref{eq:nnpp} equal
$0$.  Then, simple algebra shows that  \eqref{eq:ceccone} holds with
\begin{eqnarray} 
\label{G2}
G((\rho_1,0),\bar\rho,z)=-2 W(z+\bar \rho)\cdot(z+\bar \rho-(\rho_1,0))-\frac12(\bar \rho_2+z_2)\rho_1.  
\end{eqnarray}
Next, we remark that the r.h.s. of \eqref{G2}
equals
\begin{eqnarray}
  \label{eq:Glim}
  \lim_{\mathbb T\ni x\to(\rho_1,0)} -2(W(z+\bar \rho)-W(x))\cdot (z+\bar \rho-x).
\end{eqnarray}
To be precise, in the case $\rho_1=1$ this is true provided
$x=(x_1,x_2)$ tends to $(\rho_1,0)$ in such a way that
$x_2=o(\rho_1-x_1)$ (for $\rho_1<1$ this is not necessary, since $W$
can be continuously extended from $\mathbb T$ to the subset of
$\partial\mathbb T$ where $\rho_1+\rho_2<1$).  Then, the negativity of
\eqref{G2} follows from \eqref{eq:a-b}.

In both cases we have shown that \eqref{eq:ceccone} holds with $G\le 0$ and Proposition \ref{prop:ceccone} is proven.
\end{proof}
\begin{Remark}
  \label{rem:mediegiuste}
  A posteriori, one can argue that the measure $w_{\nu_{t,(i,j)}}$ in
  Theorem \ref{th:undec2} is asymptotically (as $N\to\infty$)
  concentrated on the Gibbs measure $\pi_\rho$ with
  $\rho=\bar \rho+z^{(i,j)}$. We do not wish to formulate this as a
  precise theorem and prefer to give the intuitive reason instead. For
  $\rho\ne \bar \rho+z$ the function $G$ in \eqref{eq:Gg} is strictly
  negative and, if the measure $w_{\nu_{t,(i,j)}}$ gave positive mass
  to the ``wrong'' densities $\rho$, one would conclude from
  \eqref{eq:inco} and Proposition \ref{prop:ceccone} that
  $\limsup_L D_L(t)$ is strictly negative, which is clearly not
  possible.
\end{Remark}

\section*{Acknowledgments}
We are very grateful to Pietro Caputo and Fabio Martinelli, who took
part in this project at an early stage, and to Julien Vovelle for
enlightening discussions on non-linear parabolic PDEs.  F.  T.  was
partially funded by Marie Curie IEF Action DMCP ``Dimers, Markov
chains and Critical Phenomena'', grant agreement n.  621894, by the
ANR-15-CE40-0020-03 Grant LSD and by the CNRS PICS grant ``Interfaces
al\'eatoires discr\`etes et dynamiques de Glauber''.  B. Laslier was
supported by the Engineering and Physical Sciences Research Council
under grant EP/103372X/1.

We wish to thank the two anonymous referees for a careful reading of the manuscript.


\begin{thebibliography}{99}

\bibitem{CMT} P. Caputo, F. Martinelli, F. L. Toninelli, \emph{Mixing times of monotone surfaces and SOS interfaces: a
mean curvature approach}, Comm. Math. Phys. {\bf 311} (2012), 157-189

\bibitem{CY} C. C. Chang and H.-T. Yau, \emph{Fluctuations of one dimensional Ginzburg-Landau models in 
nonequilibrium}, Commun. Math. Phys. {\bf 145} (1992), 
209-239.

\bibitem{CF} S. Chhita, P. L. Ferrari, \emph{A combinatorial identity for the speed of growth in an anisotropic KPZ
model},  Ann. Inst. Henri Poincar\'e D
{\bf 4}, Issue 4 (2017), 453–477

\bibitem{CT}  I. Corwin, F. L. Toninelli, \emph{Stationary measure of the driven two-dimensional q-Whittaker particle system on the torus}, Electronic Communications in Probability {\bf 21} (2016), paper no. 44 , 1-12. 

\bibitem{Fritz} J. Fritz, \emph{On the Hydrodynamic Limit of a Ginzburg Landau Lattice Model}, Prob. Th. Rel. Fields {\bf 81} (1989), 291-318.

  \bibitem{Funaki} T. Funaki, \emph{Stochastic interface models}. Lectures on probability theory and statistics, 103-274,
  Lecture Notes in Math., 1869, Springer, Berlin, 2005.

\bibitem{FunakiSpohn} T. Funaki, H. Spohn, 
\emph{Motion by Mean Curvature from the Ginzburg-Landau $\nabla\phi$
  Interface Model}, Comm. Math. Phys. {\bf 85} (1997), 1–36

\bibitem{Georgii} H.-O. Georgii, \emph{Gibbs measures and phase transitions}, Walter de Gruyter, 2011.

\bibitem{Kenyon}  R.  Kenyon,
\emph{Lectures  on  dimers},
Statistical  mechanics,  191-230,  IAS/Park  City  Math.  Ser.,
{\bf 16},
Amer. Math. Soc., Providence, RI, 2009

\bibitem{KOS} R. Kenyon, A. Okounkov, S. Sheffield, \emph{Dimers and amoebae}, Ann. Math. {\bf 163} (2006), 1019-1056.

\bibitem{KL}C. Kipnis, C. Landim, \emph{Scaling Limits of Interacting Particle Systems}, Springer, 1999.

\bibitem{LTcmp} B. Laslier, F. L. Toninelli, \emph{Lozenge tilings, Glauber dynamics and macroscopic shape}, Comm. Math.
Phys. {\bf 338} (2015), 1287-1326.

\bibitem{LThydro}B. Laslier, F. L. Toninelli, \emph{Hydrodynamic limit for a lozenge tiling Glauber dynamics},
 Annales  Henri Poincar\'e: Theor. Math. Phys. {\bf 18} (2017), 2007-2043.  

\bibitem{Liberman}G. M. Lieberman, \emph{Second order parabolic differential equations}, World Scientific, 1996

\bibitem{LRS}M. Luby, D. Randall, A. Sinclair, \emph{Markov Chain
    Algorithms for Planar Lattice Structures}, SIAM J. Comput.  {\bf
    31} (2001), 167-192.

\bibitem{Nishikawa}  T. Nishikawa,
\emph{Hydrodynamic limit for the Ginzburg-Landau $\nabla\phi$
interface model with boundary conditions}, Comm. Math. Phys.
{\bf 127}
(2003), 205-227.

\bibitem{Sheffield} S. Sheffield, \emph{Random surfaces}, Ast\'erisque (2005).

\bibitem{Spohn}H. Spohn, \emph{Large Scale Dynamics of Interacting Particles}, Springer, 1991.

\bibitem{SpohnJSP} H. Spohn, \emph{Interface  motion  in  models  with  stochastic  dynamics},  J.  Stat.  Phys.
{\bf 71} 
(1993), 1081-1132.
\bibitem{T2+1} F. L. Toninelli, \emph{A $(2+1)$-dimensional growth process with explicit stationary measure}, Ann. Probab. {\bf 45} (2017), 2899-2940.

\bibitem{Wilson} D. B. Wilson, \emph{Mixing times of lozenge tiling and card shuffling Markov chains},     Ann. Appl. Probab.
    {\bf 14} (2004), 274--325.

\end{thebibliography}
\end{document}